\NeedsTeXFormat{LaTeX2e}
[1994/12/01]
\documentclass{amsart}
\usepackage{mathptmx}

\usepackage[english]{babel}
\usepackage[utf8]{inputenc}
\usepackage{textcomp}  
\usepackage[T1]{fontenc}
\usepackage{graphicx}



\usepackage[automark]{scrpage2}
\usepackage[a4paper,left=4cm, right=2cm,top=3cm, bottom=3cm]{geometry}

\usepackage{amsthm,amsmath,amssymb,amsfonts}
\usepackage{mathrsfs}

\newtheorem{Theorem}{Theorem}[section]
\newtheorem{Lemma}[Theorem]{Lemma}
\newtheorem{Satz}[Theorem]{Proposition}
\newtheorem{Folgerung}[Theorem]{Corollary}

\theoremstyle{definition} 

\newtheorem{Bemerkung}[Theorem]{Remark}
\newtheorem{Definition}[Theorem]{Definition}

\newenvironment{bew}{\begin{proof}[Proof]}{\end{proof}}

\newcommand{\cal}{\mathcal}
\newcommand{\re}{\mathbb{R}^n}
\newcommand{\R}{\mathbb{R}}
\newcommand{\C}{\mathbb{C}}
\newcommand{\N}{\mathbb{N}}
\newcommand{\Z}{\mathbb{Z}}
\newcommand{\bspq}{B^s_{p,q}(\re) }
\newcommand{\fspq}{F^{s}_{p,q}(\re)}
\newcommand{\sint}{\lfloor s\rfloor}
\newcommand{\srest}{\{s\}}

\newcommand{\Lint}{\lfloor L\rfloor}
\newcommand{\Lrest}{\{L\}}
\newcommand{\rint}{\lfloor \rho\rfloor}

\newcommand{\lip}[1][\sigma]{lip^{#1}(\re)}
\newcommand{\Kap}[1][L]{\varkappa_{#1}}
\newcommand{\hold}[1][s]{{\cal C}^{#1}(\re)}

\begin{document}


\title{Atomic representations in function spaces and applications to pointwise multipliers and diffeomorphisms, a new approach}

\author[Benjamin Scharf]{Benjamin Scharf} 
\email{benjamin.scharf@uni-jena.de} 
\address{Mathematisches Institut\\ Fakult\"at f\"ur Mathematik und Informatik\\ Friedrich-Schiller-Universit\"at Jena\\ D-07737 Jena\\ Germany}
\keywords{Atomic decompositions, pointwise multipliers, diffeomorphisms, Sobolev spaces, Besov and Lizorkin-Triebel spaces}
\thanks{Benjamin Scharf: Mathematisches Institut, Fakult\"at f\"ur Mathematik und Informatik, Friedrich-Schiller-Universit\"at Jena, D-07737 Jena, Germany, benjamin.scharf@uni-jena.de}

\subjclass[2010]{46E35}


\begin{abstract}
In Chapter 4 of \cite{Tri92} Triebel proved two theorems concerning pointwise multipliers and diffeomorphisms in function spaces $\bspq$ and $\fspq$. In each case he presented two approaches, one via atoms and one via local means. While the approach via atoms was very satisfactory concerning the length and simplicity, only the rather technical approach via local means proved the theorems in full generality.

In this paper we generalize two extensions of these atomic decompositions, one by Skrzypczak (see \cite{Skr98}) and one by Triebel and Winkelvoss (see \cite{TrW96}) so that we are able to give a short proof using atomic representations getting an even more general result than in the two theorems in \cite{Tri92}.    
\end{abstract}

\maketitle


\section*{Introduction}

The aim of this paper is to generalize the atomic decomposition theorem from Triebel \cite{Tri92, Tri97} for Besov and Triebel-Lizorkin spaces $\bspq$ and $\fspq$ and to present two applications to pointwise multipliers and diffeomorphisms as continuous linear operators in $\bspq$ resp. $\fspq$. For a detailed (historical) treatment of the spaces $\bspq$ and $\fspq$ we refer to Triebel \cite{Tri83, Tri92}, for an introduction to atoms we refer to Frazier and Jawerth \cite{FrJ85,FrJ90}. 

According to Triebel \cite{Tri92}
 \begin{align*}
 P_{\varphi}: f \mapsto \varphi \cdot f
\end{align*}
maps $\bspq$ into $\bspq$ if $s> \sigma_p$ and $\varphi \in C^k(\re)$ with $k>s$. Furthermore, the superposition with a vector function $\varphi: \re \rightarrow \re$
 \begin{align*}
 D_{\varphi}: f \mapsto f \circ \varphi
\end{align*}
maps $\bspq$ to $\bspq$ if $\varphi$ is a $k$-diffeomorphism and $k$ is large enough in dependence of $s$ and $p$. There are similar results for $\fspq$. 

The main idea for an easy proof is the atomic decomposition theorem. Mainly one has to show that a multiplication of an atom $a_{\nu,m}$ with a function $\varphi$ resp. the superposition with $\varphi$ is still an atom with similar properties. But there was one problem: If $s \leq \sigma_p$ resp. $s \leq \sigma_{p,q}$, then atoms need to fulfil moment conditions, i.e.
\begin{align}
\label{momento}
\int_{\mathbb{R}^n} &x^{\beta} a(x) \ dx=0 \text{ if }  |\beta|\leq L-1
\end{align}
for $L \in \N_0$ and $L>\sigma_p-s$ resp. $L>\sigma_{p,q}-s$. But these properties are not preserved by multiplication resp. superposition. By Skrzypczak \cite{Skr98} these moment conditions were replaced by the more general assumptions
\begin{align*} 
\left| \int_{d \cdot Q_{\nu,m}} \psi(x) a(x) \ dx \right|\leq C \cdot  2^{-\nu\left(s+L+n\left(1-\frac{1}{p}\right)\right)} \|\psi|C^L(\re)\|
\end{align*}
for all $\psi \in C^L(\re)$. Now the situation changes: These conditions remain true after multiplication resp. superposition.

This replacement is typical when thinking of atomic, in particular wavelet representations as representations of functions not mapping from $\re$, but from more general manifolds, see the remarks on the cancellation property in \cite[Section 3.1]{Dah01}. 

In this paper we go a step further. We show that one can replace the usual $C^K(\re)$-conditions on atoms by Hölder-conditions ($\hold[K]$-spaces) in the following way:

\textit{ A function $a:\re \rightarrow \C$ is called $(s,p)_{K,L}$-atom located at $Q_{\nu,m}$ if 
\begin{align*}
 supp \ a &\subset d \cdot Q_{\nu,m}  \\
  \|a(2^{-\nu}\cdot)|\hold[K]\| &\leq C \cdot 2^{-\nu(s-\frac{n}{p})}
\end{align*}
and for every $\psi \in \hold[L]$ it holds 	
\begin{align*}
 \left| \int_{d \cdot Q_{\nu,m}} \psi(x) a(x) \ dx \right|\leq C \cdot  2^{-\nu\left(s+L+n\left(1-\frac{1}{p}\right)\right)} \|\psi|\hold[L]\|.
\end{align*}}
This generalizes the known definitions of atoms from Triebel, Skrzypczak and Winkelvoss \cite{Tri97, Skr98, TrW96}. 

Furthermore, there is an existing theory generalizing the conditions $\|a(2^{-\nu}\cdot)|\hold[K]\|$ by $\|a(\cdot)|B_{p,p}^{K}(\re)\|$ with $K>s$, mainly in connection with spline representations. For instance, see the books by Kahane and Lemarie-Rieusset \cite[part II, Section 6.5]{KaL95}, Triebel \cite[Section 2.2]{Tri06} and the recent paper by Schneider and Vybiral \cite{ScV12}. Of these, only the first book incorporates the usual moment conditions as in \eqref{momento}.

\medskip
In Section 3, as corollaries of the atomic representation theorem with these more general atoms from Section 2 we are able to extend the key theorems on pointwise multipliers and diffeomorphisms from \cite{Tri92}.
It is not the aim of our observations to give best conditions or even exact characterizations for pointwise multipliers in function spaces $\bspq$ and $\fspq$. For this we refer to Strichartz \cite{Str67}, Peetre \cite{Pee76} as well as to Maz'ya and Shaposhnikova \cite{MaS85, MaS09} for the classical Sobolev spaces, while for $B_{p,p}^s(\re)$ we refer to Franke \cite{Fra86}, Frazier and Jawerth \cite{FrJ90}, Netrusov \cite{Net92}, Koch, Runst and Sickel \cite{RS96, Sic99a, Sic99b, KoS02} as well as to Triebel \cite[Section 2.3.3]{Tri06} for general function spaces $\bspq$ and $\fspq$.

We obtain for pointwise multipliers with respect to $\bspq$:

\textit{Let $0<p\leq \infty$ and $\rho > \max(s,\sigma_p-s)$. Then there exists a positive number $c$ such that
\begin{align*}
 \|\varphi f|\bspq\| \leq c \|\varphi|\hold[\rho]\| \cdot \|f|\bspq\|
\end{align*}
for all $\varphi \in \hold[\rho]$ and all $f \in \bspq$.
}

\noindent

For further sufficient results on diffeomorphisms including characterizations for classical Sobolev spaces $W_{p}^k(\re)$ we refer to Gol'dshtein, Reshetnyak, Romanov, Ukhlov and Vodop'yanov \cite{GoR76, GoV75, GoV76, Vod89, UkV02},  \cite[Chapter 4]{GoR90}, Markina \cite{Mar90} as well as to Maz'ya and Shaposhnikova \cite{Maz69, MaS85}, while for Besov spaces $\bspq$ with $0<s<1$ we refer to Vodop'yanov, Bordaud and Sickel \cite{Vod89, BoS99}. A special case of our result (Lipschitz diffeomorphisms) can be found in Triebel \cite[Section 4]{Tri02}.

We will prove (in case of $\bspq$):

\textit{Let $0<p\leq \infty$, $\rho\geq 1$ and $\rho > \max(s,1+\sigma_p-s)$. If $\varphi$ is a $\rho$-diffeomorphism, then there exists a constant $c$ such that
\begin{align*}
 \|f(\varphi(\cdot))|\bspq\| \leq c \cdot \|f|\bspq\|.
\end{align*}
for all $f \in \bspq$. Hence $D_{\varphi}$ maps $\bspq$ onto $\bspq$.}

Furthermore, at the end of section 2 we are able to give a simple proof of a local mean theorem very similar to Triebel's result \cite[Theorem 1.15]{Tri08}, which paved the way for the wavelet characterization of $\bspq$ and $\fspq$ - where we are also using the more general Hölder-space conditions.

\section{Preliminaries}
Let $\re$ be the euclidean $n$-space, $\Z$ be the set of integers,$\N$ be the set of natural numbers and $\N_0=\N \cup \{0\}$. By $|x|$ we denote the usual euclidean norm of $x \in \re$, by $\|x|X\|$ the (quasi)-norm of an element $x$ of a (quasi)-Banach space $X$.  

By ${\cal S}(\re)$ we mean the Schwartz space on $\re$, by ${\cal S}'(\re)$ its dual. The Fourier transform of $f \in {\cal S}'(\re)$ resp. its inverse will be denoted by $\hat{f}$ resp. $\check{f}$. The convolution of $f \in {\cal S}'(\re)$ and $\varphi \in {\cal S}(\re)$ will be denoted by $f * \varphi$.

By $L_p(\re)$ for $0<p\leq \infty$ we denote the usual quasi-Banach space of $p$-integrable complex-valued functions with respect to the Lebesgue measure $\mu$ with quasi-norm
\begin{align*}
 \|f|L_p(\re)\|:=\left(\int_{\re} |f(x)|^p \ dx\right)^{\frac{1}{p}}.
\end{align*}

Let $X,Y$ be quasi-Banach spaces. By the notation $X \hookrightarrow Y$ we mean that $X \subset Y$ and that the inclusion map is bounded. 

Throughout the paper all unimportant constants will be called $c,c',C$ etc. Only if extra clarity is desirable, the dependency of the parameters will be stated explicitly. The concrete value of these constants may vary in different formulas but remains the same within one chain of inequalities.
 \subsection{H\"older spaces of differentiable functions}
Let $k \in \N_0$. Then by $C^k(\re)$ we denote the space of all functions $f: \re \rightarrow \C$ which are $k$-times continuously differentiable (continuous, if $k=0$) such that the norm
\begin{align*}
 \|f|C^k(\re)\|:=\sum_{|\alpha|\leq k} \sup |D^{\alpha} f(x)|
\end{align*}
is finite, where the $\sup$ is taken over $x \in \re$.

Furthermore, the set $C^{\infty}(\re)$ is defined by
\begin{align*}
 C^{\infty}(\re):=\bigcap_{k \in \N_0} C^k(\re).
\end{align*}

\begin{Definition}
\label{Hoelder}
 Let $0<\sigma\leq 1$ and $f: \re \rightarrow \C$ be continuous. We define
\begin{align*}
 \|f|\lip[\sigma]\|:=\sup_{x,y\in \re, x\neq y} \frac{|f(x)-f(y)|}{|x-y|^{\sigma}}.
\end{align*}

If $s\in \mathbb{R}$, then there are uniquely determined $\sint \in \mathbb{Z}$ and $\srest \in (0,1]$ with $s=\sint+\srest$. 

Let $s>0$. Then the H\"older space with index $s$ is given by
\begin{align*}
 \hold&=\left\{f \in C^{\sint}(\re):  \|f|{\hold}\|<\infty\right\} \text{ with } \\
 \|f&|{\hold}\|:=\|f|C^{\sint}(\re)\|+\sum_{|\alpha|=\sint} \|D^{\alpha} f|\lip[\srest]\|.
\end{align*}
If $s=0$, then $\hold[0]:=L_{\infty}(\re)$, which is sufficient for the later statements, see e.g. Theorem \ref{AtomicRepr}.

\end{Definition}

\subsection{Besov and Triebel-Lizorkin function spaces on $\re$}
Let $\varphi_j$ for $j \in \mathbb{N}_0$ be elements of ${\cal S}(\mathbb{R}^n)$ with
\begin{align}
 \label{GrundResolution}
 \begin{split}
 supp \ \varphi_0 &\subset  \{|\xi| \leq 2\}, \\
 supp \ \varphi_j &\subset\{2^{j-1}\leq |\xi| \leq 2^{j+1}\} \text{ for } j \in \mathbb{N}, \\
 \sum_{j=0}^{\infty} \varphi_j(\xi)&=1 \text{ for all } \xi \in \mathbb{R}^n, \\
 |D^{\alpha} \varphi_j(\xi)| &\leq c_{\alpha} 2^{-j|\alpha|} \text{ for all } \alpha \in \mathbb{N}_0^n.
\end{split}
\end{align}
Then we call $\{\varphi_j\}_{j=0}^{\infty}$ a smooth dyadic resolution of unity. For instance one can choose $\Psi \in {\cal S}(\mathbb{R}^n)$ with $\Psi(\xi)=1$ for $|\xi|\leq 1$ and $supp \ \Psi \subset \{|\xi|\leq 2 \}$ and set 
\begin{align*}
 \varphi_0(\xi):=\Psi(\xi), \quad \varphi_1(\xi):=\Psi(\xi/2)-\Psi(\xi), \quad \varphi_j(\xi):=\varphi_1(2^{-j+1}\xi) \text{ for } j \in \mathbb{N}.
\end{align*}
\begin{Definition}
 \label{GrundDefinitionB}
Let $0<p\leq \infty$, $0<q\leq \infty$, $s \in \mathbb{R}$ and $\{\varphi_j\}_{j=0}^{\infty}$ be a smooth dyadic resolution of unity. Then $\bspq$ is the collection of all  $f\in {\cal S}'(\mathbb{R}^n)$ such that the quasi-norm
\begin{align*}
 \|f|\bspq\|:=\left(\sum_{j=0}^{\infty} 2^{jsq}\|(\varphi_j \hat{f})\check{\ }|L_p(\re)\|^q\right)^{\frac{1}{q}}
\end{align*}
(modified if $q=\infty$) is finite.
\end{Definition}
\begin{Definition}
 \label{GrundDefinitionF}
Let $0<p<\infty$, $0<q\leq \infty$, $s \in \mathbb{R}$ and $\{\varphi_j\}_{j=0}^{\infty}$ be a smooth dyadic resolution of unity. Then $\fspq$ is the collection of all $f\in {\cal S}'(\mathbb{R}^n)$ such that the quasi-norm
\begin{align*}
 \|f|\fspq\|:=\left\|\left(\sum_{j=0}^{\infty} 2^{jsq} \left|(\varphi_j \hat{f})\check{\ }(\cdot)\right|^q \right)^{\frac{1}{q}}\big|L_p(\re)\right\|\end{align*}
(modified if $q=\infty$) is finite.
\end{Definition}
One can show that the introduced quasi-norms\footnote{In the following we will use the term ``norm`` even if we only have quasi-norms for $p<1$ or $q<1$.} for two different smooth dyadic resolutions of unity are equivalent for fixed $p$, $q$ and $s$, i. e. that the so defined spaces are equal. This follows from Fourier multiplier theorems, see \cite{Tri83}, Section 2.3.2., p. 46. Furthermore, the so defined spaces are (quasi)-Banach spaces.

The next proposition, the so called Fatou property, is a classical observation for function spaces $\bspq$ and $\fspq$, see \cite{Fra86}.
\begin{Definition}
 Let $A$ be a quasi-Banach space with ${\cal S}(\re) \hookrightarrow A \hookrightarrow {\cal S}'(\re)$. Then we say that $A$ has the Fatou property if there exists a constant $c$ such that:
 If a sequence $\{f_n\}_{n \in \N} \subset A$ converges to $f$ with respect to the weak topology in ${\cal S}'(\re)$ and if $\|f_n|A\|\leq D$, then $f \in A$ and $\|f|A\|\leq c \cdot D$.
\end{Definition}

\begin{Satz}
\label{Fatou}
Let $s \in \R, 0<q\leq \infty$ and $0<p\leq \infty$ resp. $0<p<\infty$. Then $\bspq$ and $\fspq$ have the Fatou property.
\end{Satz}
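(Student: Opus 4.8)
The plan is to reduce the assertion to two applications of the classical Fatou lemma for integrals — one for the Lebesgue integral defining $\|\cdot|L_p(\re)\|$ and one for the series over the dyadic blocks $j$, viewed as an integral with respect to the counting measure on $\N_0$ — after first upgrading the assumed weak-$\ast$ convergence $f_n \to f$ in ${\cal S}'(\re)$ to genuine pointwise convergence of the band-limited building blocks $(\varphi_j\hat{f_n})\check{\ }$. Since none of the steps loses any quantitative factor, the Fatou constant will come out as $c=1$.

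I would first fix a smooth dyadic resolution of unity $\{\varphi_j\}_{j=0}^\infty$ and record that, for each fixed $j$, the operations $g\mapsto\hat g$, $g\mapsto\varphi_j g$ (multiplication by a fixed element of ${\cal S}(\re)$) and $g\mapsto\check g$ are all continuous on ${\cal S}'(\re)$ with respect to the weak-$\ast$ topology; hence $(\varphi_j\hat{f_n})\check{\ }\to(\varphi_j\hat f)\check{\ }$ in ${\cal S}'(\re)$ for every $j$. Because $\varphi_j$ has compact support, say $K_j$, each $(\varphi_j\hat g)\check{\ }$ (for $g = f_n$ or $g = f$) is a $C^\infty$ function of at most polynomial growth, and picking $\Phi_j\in{\cal S}(\re)$ with $\hat{\Phi_j}\equiv1$ on $K_j$ gives the reproducing identity $(\varphi_j\hat g)\check{\ }=(\varphi_j\hat g)\check{\ }*\Phi_j$. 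Therefore
\[
(\varphi_j\hat{f_n})\check{\ }(x)=\big\langle(\varphi_j\hat{f_n})\check{\ },\,\Phi_j(x-\cdot)\big\rangle\longrightarrow\big\langle(\varphi_j\hat f)\check{\ },\,\Phi_j(x-\cdot)\big\rangle=(\varphi_j\hat f)\check{\ }(x)
\]
for every $x\in\re$ and every $j\in\N_0$, since $\Phi_j(x-\cdot)\in{\cal S}(\re)$.

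With pointwise convergence in hand, for the $F$-scale I would estimate, at each $x\in\re$, using Fatou's lemma for the counting measure in $j$ (together with the continuity of $t\mapsto|t|^q$ and $t\mapsto t^{1/q}$),
\[
\Big(\sum_{j}2^{jsq}\big|(\varphi_j\hat f)\check{\ }(x)\big|^q\Big)^{1/q}\le\liminf_{n}\Big(\sum_{j}2^{jsq}\big|(\varphi_j\hat{f_n})\check{\ }(x)\big|^q\Big)^{1/q},
\]
and then apply Fatou's lemma for the Lebesgue integral to push this $\liminf$ through $\|\cdot|L_p(\re)\|$, which yields $\|f|\fspq\|\le\liminf_n\|f_n|\fspq\|\le D$; in particular $f\in\fspq$. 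For the $B$-scale the same two Fatou steps are carried out in the opposite order: Fatou in $L_p(\re)$ first gives $\|(\varphi_j\hat f)\check{\ }|L_p(\re)\|\le\liminf_n\|(\varphi_j\hat{f_n})\check{\ }|L_p(\re)\|$ for each $j$, and Fatou for the counting measure in $j$ then gives $\|f|\bspq\|\le\liminf_n\|f_n|\bspq\|\le D$, so $f\in\bspq$.

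All of this is soft, and the points I would treat with a bit of care — what I would flag as the main obstacle — are (i) making precise that band-limitedness together with weak-$\ast$ convergence forces honest pointwise convergence, which is exactly what the reproducing identity above delivers, and (ii) the endpoint cases $q=\infty$ and, for the $B$-scale, $p=\infty$, where the relevant quasi-norms are the modified $\sup$-type expressions: there Fatou's lemma is replaced by the elementary remark that a pointwise limit of functions each bounded by $M$ in the pertinent $\sup$ norm is again bounded by $M$, and the $\ell_\infty$-sum over $j$ is handled the same way. Since no loss is incurred anywhere, the Fatou property holds with $c=1$.
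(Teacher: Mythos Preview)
Your argument is correct and is the standard one. Note, however, that the paper does not supply its own proof of this proposition: it records the Fatou property as ``a classical observation'' and simply cites Franke \cite{Fra86}. So there is no paper proof to compare against; what you have written is precisely the argument one finds in the literature (and presumably in the cited reference), with the two key ingredients being (a) the upgrade from weak-$\ast$ convergence in ${\cal S}'(\re)$ to pointwise convergence of the band-limited pieces $(\varphi_j\hat{f_n})\check{\ }$ via a reproducing kernel, and (b) two nested applications of the classical Fatou lemma, yielding constant $c=1$. Your handling of the endpoint cases $p=\infty$ and $q=\infty$ is also fine.
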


\begin{Bemerkung}
\label{HoldB}
 If $\rho>0$ and $\rho \notin \N$, then $\hold[\rho]=B_{\infty,\infty}^{\rho}(\re)$. This is a classical observation, for instance see \cite[Sections 1.2.2, 2.6.5]{Tri92} or for the original source \cite[Lemma 4]{Zyg45}.
\end{Bemerkung}

Moreover, we set
\begin{align*}
\sigma_{p}=n\left(\frac{1}{p}-1\right)_+, \quad \sigma_{p,q}=n\left(\frac{1}{\min(p,q)}-1\right)_+,
\end{align*}
where $a_+=\max(a,0)$. 

\section{Atomic decompositions}
At first we describe the concept of atoms as one can find it in \cite{Tri97}, Definition 13.3, p. 73, now generalized using ideas from \cite{Skr98} and \cite{TrW96}. 

In particular, this gives the possibility to omit the distinction between $\nu=0$ and $\nu \in \mathbb{N}$ and now the usual parameters $K$ and $L$ are nonnegative real numbers instead of natural numbers.

\subsection{General atoms}
Let $Q_{\nu,m}:=\{x \in \mathbb{R}^n: |x_i-2^{-\nu}m_i|\leq 2^{-\nu-1}\}$ be the cube with sides parallel to the axes, with center at $2^{-\nu}m$ and side length $2^{-\nu}$ for $m \in \mathbb{Z}^n$ and $\nu \in \mathbb{N}_0$.
\begin{Definition}
\label{Atoms}
Let $s \in \mathbb{R}$, $0<p\leq\infty$, $K,L \in \R$ and $K,L\geq 0$. Furthermore let $d>1$, $C>0$, $\nu \in \mathbb{N}_0, m \in \mathbb{Z}$. A function $a:\re \rightarrow \mathbb{C}$ is called $(s,p)_{K,L}$-atom located at $Q_{\nu,m}$ if 
\begin{align}
 \label{Atom1}supp \ a &\subset d \cdot Q_{\nu,m}  \\
 \label{Atom2} \|a(2^{-\nu}\cdot)|\hold[K]\| &\leq C \cdot 2^{-\nu(s-\frac{n}{p})}
\end{align}
and for every $\psi \in \hold[L]$ it holds 	
\begin{align}
\label{Atom3} \left| \int_{d \cdot Q_{\nu,m}} \psi(x) a(x) \ dx \right|\leq C \cdot  2^{-\nu\left(s+L+n\left(1-\frac{1}{p}\right)\right)} \|\psi|\hold[L]\|.
\end{align}
The constant in the exponent will be shortened by $\Kap:=s+L+n\left(1-\frac{1}{p}\right)$. 
\end{Definition}
\begin{Bemerkung}
If $L=0$, then condition \eqref{Atom3} is neglectable since it follows from $\eqref{Atom1}$ and $\eqref{Atom2}$ with $K=0$.

If $K=0$, then by Definition \ref{Hoelder} we only require $a$ to be suitably bounded.

Later on, we will choose one $(s,p)_{K,L}$-atom for every $\nu \in \N_0$ and $m \in \Z^n$. Then the parameter $d>1$ shall be the same for all these atoms - it describes the overlap of these atoms on one fixed level $\nu \in \N_0$.
\end{Bemerkung}

\begin{Bemerkung}
\label{usualform}
The usual formulation of $\eqref{Atom2}$ as in \cite{Tri97} was
\begin{align}
\label{Atom21}
  |D^{\alpha} a(x)| &\leq 2^{-\nu\left(s-\frac{n}{p}\right)+|\alpha|\nu} \text{ for all } |\alpha|\leq K
\end{align}
for $K \in \N_0$. The modification here was suggested in \cite{TrW96}. It is easy to see that \eqref{Atom2} follows from \eqref{Atom21} if $K$ is a natural number, since $C^K(\re) \hookrightarrow \hold[K]$.  
\end{Bemerkung}
\begin{Bemerkung}
\label{Ordered}
The usual formulation of $\eqref{Atom3}$ as in \cite{Tri97} was
\begin{align}
  \label{Atom31}\int_{\mathbb{R}^n} &x^{\beta} a(x) \ dx=0 \text{ if }  |\beta|\leq L-1
\end{align}
for $\nu \in \mathbb{N}$, so $\nu \neq 0$. The modification here was suggested in Lemma 1 of \cite{Skr98} for natural numbers $L+1$ (using $C^{L}(\re)$ instead of $\hold[L]$). Now we extended this definition to general positive $L$. For natural $L-1$ one can derive $\eqref{Atom3}$ from $\eqref{Atom31}$ using a Taylor expansion, see \cite[Lemma 1, (12) and (14)]{Skr98} or the upcoming Lemma \ref{Momenter}. Hence formulation $\eqref{Atom3}$ is a generalization. 

An alternative formulation of \eqref{Atom3} is given by
\begin{align}
 \label{Atom32}
\left| \int_{d \cdot Q_{\nu,m}} (x-2^{-\nu}m)^{\beta} a(x) \ dx \right|\leq C \cdot  2^{-\nu\Kap} \text{ if } |\beta|\leq \Lint.
\end{align}
Obviously, this condition is covered by condition \eqref{Atom3}. For the other direction see \cite[Lemma 1, (12) and (14)]{Skr98} or the upcoming Remark \ref{MomentBem}, in particular \eqref{IntMomenter}. It is also possible to assume this condition for all $\beta\in \N^n$ since the statements for $|\beta|\geq L$ follow from the support condition \eqref{Atom1} and the boundedness condition included in \eqref{Atom2}.

This shows that both conditions \eqref{Atom2} and \eqref{Atom3} are ordered in $K$ resp. $L$, i.e. the conditions get stricter for increasing $K$ resp. $L$.
\end{Bemerkung}

Now the question will be whether these more general atoms allow analogous results regarding atomic decompositions.

\subsection{Sequence spaces}
We introduce the sequence spaces $b_{p,q}$ and $f_{p,q}$, whose use will become clear in the following. For this we refer to \cite{Tri97}, Definition 13.5, p. 74.
\begin{Definition}
\label{DefSeq}
 Let $0<p\leq \infty$, $0<q\leq \infty$ and 
 \begin{align*}
  \lambda=\left\{ \lambda_{\nu,m} \in \mathbb{C}: \nu \in \mathbb{N}_0, m \in \mathbb{Z}^n\right\}.
 \end{align*}
We set
\begin{align*}
 b_{p,q}:=\left\{\lambda: \|\lambda|b_{p,q}\|=\left(\sum_{\nu=0}^{\infty} \left(\sum_{m \in \mathbb{Z}^n} |\lambda_{\nu,m}|^p\right)^{\frac{q}{p}} \right)^{\frac{1}{q}} <\infty \right\}
\end{align*}
and
\begin{align*}
 f_{p,q}:=\left\{\lambda: \|\lambda|f_{p,q}\|=\left\|\left(\sum_{\nu=0}^{\infty} \sum_{m \in \mathbb{Z}^n} |\lambda_{\nu,m}\chi_{\nu,m}^{(p)}(\cdot)|^q\right)^{\frac{1}{q}}\big|L_p(\re)\right\| <\infty \right\}
\end{align*}
(modified in the case $p=\infty$ or $q=\infty$), where $\chi_{\nu,m}^{(p)}$ is the $L_p(\re)$-normalized characteristic function of the cube $Q_{\nu,m}$, i.e
\begin{align*}
 \chi_{\nu,m}^{(p)}=2^{\frac{\nu n}{p}} \text{ if } x \in Q_{\nu,m} \text{ and }  \chi_{\nu,m}^{(p)}=0 \text{ if } x \notin Q_{\nu,m}.
\end{align*}
\end{Definition}

\subsection{Local means}
\label{LocalMeans}
Let $N \in \N_0$ be given. We choose $k_0,k \in {\cal S}(\re)$ with compact support - e.g. $supp \ k_0, supp \ k \subset e \cdot Q_{0,0}$ for a suitable $e>0$ - such that
\begin{align}
\label{MeanMomenter}
D^{\alpha}\hat{k}(0)=0 \text{ if } |\alpha|< N,
\end{align}
while $\hat{k_0}(0)\neq 0$. Furthermore, let there be an $\varepsilon>0$ such that $\hat{k}(x) \neq 0$ for $0<|x|<\varepsilon$.
 
Such a choice is possible, see \cite[11.2]{Tri97}. We set $k_j(x):=2^{jn} k(2^jx)$ for $j\in \N$.
\begin{Satz}
\label{RychkovLokalSatz}
Let $N \in \N_0$ and $N>s$.

(i) Let $0<p\leq\infty$ and $0<q\leq \infty$. Then
\begin{align*}
  \|f|\bspq\|_{k_0,k}:= \|k_0*f|L_p(\re)\|+\left(\sum_{j=1}^{\infty} 2^{jsq}\|k_j*f|L_p(\re)\|^q\right)^{\frac{1}{q}}
\end{align*}
(modified for $q=\infty$) is an equivalent norm for $\|\cdot|\bspq\|$. It holds
\begin{align*}
\bspq=\left\{f \in {\cal S}'(\mathbb{R}^n): 
 \|f|\bspq\|_{k_0,k}<\infty \right\}.
\end{align*}

(ii) Let $0<p<\infty$ and $0<q\leq \infty$. Then
\begin{align*}
  \|f|\fspq\|_{k_0,k}:=\|k_0*f|L_p(\re)\|+\left\|\left(\sum_{j=1}^{\infty} 2^{jsq} \left|(k_j*f)(\cdot)\right|^q \right)^{\frac{1}{q}}\big|L_p(\re)\right\|
\end{align*}
(modified for $q=\infty$) is an equivalent norm for $\|\cdot|\fspq\|$. It holds
\begin{align*}
 \fspq=\left\{f \in {\cal S}'(\mathbb{R}^n): 
 \|f|\fspq\|_{k_0,k}<\infty \right\}.
\end{align*}
\end{Satz}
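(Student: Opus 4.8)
The plan is to prove the claimed norm equivalence by establishing the two inequalities $\|f|\bspq\|_{k_0,k}\lesssim\|f|\bspq\|$ and $\|f|\bspq\|\lesssim\|f|\bspq\|_{k_0,k}$ separately, reducing each to the atomic representation theorem (Theorem~\ref{AtomicRepr}). Throughout I write $g_j:=k_j\ast f$, so that the local-means norm is the $\ell_q$- (resp. $L_p(\ell_q)$-) norm of the sequence $(2^{js}g_j)_j$. The two driving ingredients are the stability of atoms under convolution with the kernels $k_j$ and a Calder\'on-type reproducing identity built from $k_0,k$. I would treat the $\fspq$-case in parallel to the $\bspq$-case, the only change being that every $\ell_q$-summation over $m$ or $j$ is replaced by its $L_p(\ell_q)$-counterpart and controlled by the vector-valued (Fefferman--Stein) maximal inequality instead of a scalar $\ell_q$-convolution.

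For the first inequality I would start from an atomic decomposition $f=\sum_{\nu,m}\lambda_{\nu,m}a_{\nu,m}$ furnished by Theorem~\ref{AtomicRepr}, choosing the smoothness and moment parameters $K>s$ and $L>\sigma_p-s$ (resp. $L>\sigma_{p,q}-s$) as large as convenient, so that $\|\lambda|b_{p,q}\|\lesssim\|f|\bspq\|$. Then $g_j=\sum_{\nu,m}\lambda_{\nu,m}\,(k_j\ast a_{\nu,m})$, and the heart of the argument is a pointwise/$L_p$ estimate showing that $\|k_j\ast a_{\nu,m}|L_p(\re)\|$ decays in $|j-\nu|$. For $j\geq\nu$ (fine kernel, coarser atom) the moment conditions \eqref{MeanMomenter} on $k$ annihilate the Taylor polynomial of the $\hold[K]$-smooth atom \eqref{Atom2} and produce a factor $2^{-(j-\nu)\min(N,K)}$; here the hypothesis $N>s$ is exactly what makes the resulting series summable after multiplication by $2^{(j-\nu)s}$. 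For $j\leq\nu$ (coarse kernel, finer atom) I would instead test the cancellation property \eqref{Atom3} against $\psi=k_j(x-\cdot)$, whose $\hold[L]$-norm scales controllably, producing a factor governed by $L$ that is summable once $L$ has been chosen large. Summing over $m$ (using the bounded overlap of the supports) and then convolving in $j$ against this summable kernel yields $\|f|\bspq\|_{k_0,k}\lesssim\|\lambda|b_{p,q}\|\lesssim\|f|\bspq\|$.

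For the converse inequality I would construct a reproducing identity $f=\sum_{j=0}^\infty \Lambda_j\ast g_j$, converging in ${\cal S}'(\re)$, where $\Lambda_0,\Lambda$ are chosen smooth, compactly supported, and with vanishing moments of arbitrarily high order; this construction is possible precisely because $\hat{k_0}(0)\neq0$ and $\hat{k}$ does not vanish on $0<|x|<\varepsilon$. I would then localise each term by splitting $g_j$ according to the cubes $Q_{j,m}$ and setting $a_{j,m}:=\lambda_{j,m}^{-1}\int_{Q_{j,m}}\Lambda_j(\cdot-y)\,g_j(y)\,dy$, with $\lambda_{j,m}$ a local $L_p$-mass of $g_j$ near $Q_{j,m}$. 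The compact support of $\Lambda$ gives \eqref{Atom1}, its smoothness gives the $\hold[K]$-bound \eqref{Atom2}, and its vanishing moments give the cancellation \eqref{Atom3} with constant $\Kap[L]$; meanwhile the definition of $\lambda_{j,m}$ ensures $\|\lambda|b_{p,q}\|\lesssim\|(2^{js}g_j)_j\|=\|f|\bspq\|_{k_0,k}$. Applying Theorem~\ref{AtomicRepr} to the atomic series $f=\sum_{j,m}\lambda_{j,m}a_{j,m}$ then yields $\|f|\bspq\|\lesssim\|f|\bspq\|_{k_0,k}$, and the Fatou property (Proposition~\ref{Fatou}) can be used to justify the limiting and convergence steps.

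I expect the main obstacle to be the construction of the dual kernels $\Lambda_0,\Lambda$ with simultaneously compact support, high vanishing moments, and the exact reproducing identity, together with the proof that $\sum_j\Lambda_j\ast g_j$ converges to $f$ in ${\cal S}'(\re)$. The second delicate point, pervading both directions, is making the convolution estimates uniform in the quasi-Banach range $p,q<1$: the pieces $k_j\ast a_{\nu,m}$ and $\Lambda_j\ast g_j$ are band-limited (or compactly supported), so one must invoke the Fourier-multiplier / Plancherel--P\'olya--Nikol'skij inequality to control the $L_p$-quasi-norms and, in the $\fspq$-case, the vector-valued maximal inequality to carry out the summation in $L_p(\ell_q)$. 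All remaining steps are routine bookkeeping of the scaling factors $2^{js}$, $2^{-\nu(s-n/p)}$ and $2^{-\nu\Kap[L]}$.
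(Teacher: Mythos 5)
The paper does not prove this proposition at all: it is quoted from Rychkov \cite{Ryc99} (with technical modifications in \cite{Sch10}), and within the paper's logical architecture it sits \emph{upstream} of the atomic machinery. This makes your proposal circular: you invoke Theorem \ref{AtomicRepr} in both directions, but the paper's proof of the hard half of Theorem \ref{AtomicRepr} (that a linear combination of atoms lies in $\bspq$ with the right norm bound) consists precisely of estimating $\|f|\bspq\|_{k_0,k}$ and then appealing to Proposition \ref{RychkovLokalSatz} to convert that into $\|f|\bspq\|$. Your first direction, $\|f|\bspq\|_{k_0,k}\lesssim\|f|\bspq\|$ via an atomic decomposition and the two-case estimate of $k_j\ast a_{\nu,m}$, is sound as far as it goes and is essentially the paper's Corollary \ref{FolgLocal}; but it only needs the ``easy'' half of Theorem \ref{AtomicRepr}. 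The converse inequality is the genuinely hard content of the proposition, and there your argument both presupposes what is to be proven and leaves the real work unaddressed.

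Concretely, three steps in the converse direction would fail as sketched. First, the existence of compactly supported $\Lambda_0,\Lambda$ with arbitrarily high vanishing moments satisfying the exact reproducing identity $f=\sum_j\Lambda_j\ast k_j\ast f$ is not a routine consequence of $\hat{k_0}(0)\neq0$ and $\hat{k}\neq0$ on $0<|x|<\varepsilon$; constructing them (via a Neumann-series iteration exploiting the Tauberian condition) is the central lemma of Rychkov's paper, not a preliminary. Second, your appeal to the Plancherel--P\'olya--Nikol'skij inequality to handle $p,q<1$ is misplaced: $k_j\ast f$ and $\Lambda_j\ast g_j$ are compactly supported in the \emph{space} variable, hence not band-limited, so those inequalities do not apply. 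This is exactly the obstruction that separates local means from the Fourier-analytic Definition \ref{GrundDefinitionB}, and Rychkov overcomes it with dedicated Peetre-type maximal function estimates derived from the iterated reproducing identity. Third, taking $\lambda_{j,m}$ to be a local $L_p$-mass of $g_j$ does not yield the pointwise bound \eqref{Atom2} for the resulting atoms nor the estimate $\|\lambda|f_{p,q}\|\lesssim\|f|\fspq\|_{k_0,k}$ in the quasi-Banach range; one needs local suprema controlled by maximal functions, which again requires the machinery you have omitted. In short, the proposal reduces the proposition to results that, in this paper, depend on it, and the independent ingredients it would need are precisely the ones it treats as ``routine.''
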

\begin{Bemerkung}
 This proposition is due to \cite{Ryc99}. Some minor technicalities of the proof where modified in the fourth step of \cite[Theorem 2.1]{Sch10} (for the more general vector-valued case).
\end{Bemerkung}

\subsection{A general atomic representation theorem}
We start with a lemma which helps us to understand the relation between conditions like \eqref{Atom3} and \eqref{Atom31} and which will be heavily used in the proof of the atomic representation theorem. It also shows that local means and atoms are related, see condition \eqref{MeanMomenter}.
\begin{Lemma}
 \label{Momenter}
Let $j \in \N_0$. If $k_0$ and $k_j=2^{jn}k(2^{j}\cdot)$ for $j \in \N$ are local means as in Definition \ref{LocalMeans}, then $2^{-j\left(s+n\left(1-\frac{1}{p}\right)\right)}\cdot k_j$ is an $(s,p)_{K,L}$-atom located at $Q_{j,0}$ for arbitrary $K>0$ and for $L\leq N+1$ with $N$ from \eqref{MeanMomenter}.
\end{Lemma}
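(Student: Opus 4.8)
The plan is to verify the three defining properties of an $(s,p)_{K,L}$-atom from Definition \ref{Atoms} directly for the candidate function $b_j := 2^{-j(s+n(1-\frac1p))} k_j$, where $k_j = 2^{jn}k(2^j\cdot)$ for $j\in\N$ and $b_0 := 2^{-0\cdot(\dots)}k_0 = k_0$. The support condition \eqref{Atom1} is immediate: since $\operatorname{supp} k \subset e\cdot Q_{0,0}$, we get $\operatorname{supp} k_j \subset e\cdot Q_{j,0}$ after rescaling by $2^j$, so with $d$ chosen as any number exceeding $e$ (and likewise for $k_0$) the support sits inside $d\cdot Q_{j,0}$. So the work lies in the normalization condition \eqref{Atom2} and the moment/cancellation condition \eqref{Atom3}.

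For \eqref{Atom2} I would compute $b_j(2^{-j}\cdot) = 2^{-j(s+n(1-\frac1p))} 2^{jn} k(2^j 2^{-j}\cdot) = 2^{-j(s+n(1-\frac1p))+jn} k(\cdot) = 2^{-j(s-\frac np)} k(\cdot)$. Since $k \in {\cal S}(\re)$ is a fixed Schwartz function, $\|k|\hold[K]\|$ is a finite constant for every $K>0$ (this uses only that $k$ and all its derivatives are bounded, plus that bounded derivatives of order $\lceil K\rceil$ give a Hölder condition of order $\{K\}$ via the mean value theorem on the compact support). Hence $\|b_j(2^{-j}\cdot)|\hold[K]\| \le C\, 2^{-j(s-\frac np)}$ with $C$ depending only on $k$ and $K$, which is exactly \eqref{Atom2}. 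The case $j=0$ is the same computation with $k_0$ in place of $k$ and the factor equal to $1$.

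The main point — and the one requiring the most care — is condition \eqref{Atom3}, i.e. that $\left|\int \psi(x) b_j(x)\,dx\right| \le C\, 2^{-j\Kap}\,\|\psi|\hold[L]\|$ for all $\psi\in\hold[L]$, where $\Kap = s+L+n(1-\frac1p)$ and $L\le N+1$. The idea is the classical one: the vanishing-moment hypothesis \eqref{MeanMomenter} $D^\alpha\hat k(0)=0$ for $|\alpha|<N$ says that $\int x^\beta k(x)\,dx = 0$ for $|\beta|\le N-1$, so $k$ kills polynomials of degree $\le N-1 = \lceil L\rceil -1$ whenever $L \le N$ (and for $L=N+1$, degree $\le N$, one more moment — here one must check the hypothesis $|\alpha|<N$ covers $|\beta|\le N-1$ only, so strictly one should take $\Lint\le N-1$... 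I would double-check the indexing and, if needed, argue the borderline $L = N+1$ case, $\lfloor L\rfloor = N$, separately or note it only needs moments up to order $N-1$ plus boundedness). Given this, write $\int \psi(x) k_j(x)\,dx$ after substituting $y = 2^j x$: it becomes $2^{-jn}\int \psi(2^{-j}y) k(y)\,dy$. Now Taylor-expand $\psi(2^{-j}y)$ around $y=0$ to order $\Lint$; the polynomial part is annihilated by the moment conditions on $k$, and the remainder is bounded in absolute value by $c\, |2^{-j}y|^{L}\,\|\psi|\hold[L]\|$ on the support $|y|\le e\cdot 2^{-1}$ — this is precisely where the Hölder norm of order $L$ enters, controlling the order-$\Lint$ Taylor remainder. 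Since $k\in{\cal S}$, $\int |y|^L |k(y)|\,dy < \infty$, so $\left|\int \psi k_j\right| \le c\, 2^{-jn} 2^{-jL}\,\|\psi|\hold[L]\|$. Multiplying by the normalization factor $2^{-j(s+n(1-\frac1p))}$ gives the bound $c\, 2^{-j(s+L+n(1-\frac1p))}\|\psi|\hold[L]\| = c\,2^{-j\Kap}\|\psi|\hold[L]\|$, as required. For $j=0$ there is nothing to prove beyond what \eqref{Atom1}–\eqref{Atom2} already give, consistent with the remark after Definition \ref{Atoms} that \eqref{Atom3} for small $L$ is implied by the other two conditions; alternatively one notes $\Kap$ at $\nu=0$ is just a fixed exponent and the estimate is the trivial $\left|\int\psi k_0\right|\le \|k_0|L_1\|\,\|\psi|L_\infty\|$ absorbed into the constant. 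I would present the Taylor-remainder estimate as the heart of the argument and refer to \cite[Lemma 1]{Skr98} for the analogous computation, cross-referencing the upcoming Remark \ref{MomentBem}.
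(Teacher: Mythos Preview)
Your argument is essentially the paper's: the support and the H\"older condition \eqref{Atom2} come straight from $k\in\mathcal{S}(\re)$, and \eqref{Atom3} is obtained by Taylor-expanding $\psi$, annihilating the polynomial part via the vanishing moments \eqref{MeanMomenter}, and bounding the remainder by $c\,|x|^{L}\|\psi|\hold[L]\|$. One harmless slip in your substitution: since $k_j=2^{jn}k(2^j\cdot)$, the change of variables $y=2^jx$ gives $\int\psi(x)\,k_j(x)\,dx=\int\psi(2^{-j}y)\,k(y)\,dy$ with \emph{no} prefactor $2^{-jn}$ (the Jacobian cancels the $2^{jn}$), so the correct intermediate bound is $\bigl|\int\psi\,k_j\bigr|\le c\,2^{-jL}\|\psi|\hold[L]\|$, which after multiplying by $2^{-j(s+n(1-1/p))}$ yields exactly the $c\,2^{-j\Kap}\|\psi|\hold[L]\|$ you state; your indexing concern about $\Lint$ versus $N$ is the same one the paper glosses over.
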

\begin{proof}
 For $j=0$ there is nothing to prove since the moment condition \eqref{Atom3} follows from \eqref{Atom1} and \eqref{Atom2}. So we can concentrate on $j \in \N$: The support condition \eqref{Atom1} follows from the compact support of $k$. Furthermore,
\begin{align*}
 \|k_j(2^{-j})|\hold[K]\|=2^{jn} \|k|\hold[K]\|\leq C \, 2^{jn}
\end{align*}
since $K$ is arbitrarily often differentiable. Hence, the Hölder-condition \eqref{Atom2} is shown.

Now we have to show condition \eqref{Atom3}. There is nothing to prove for $j=0$. Hence, we can use the moment conditions \eqref{MeanMomenter}. Let $L>0$, $L=\Lint+\Lrest$ as in Section \ref{Hoelder} and let $\psi \in \hold[L]$. We expand the $\Lint$-times continuously differentiable function $\psi$ into its Taylor series of order $\Lint-1$. Then there exists a $\theta \in (0,1)$ with
\begin{align*}
 \psi(x)&=\sum_{|\beta|\leq \Lint-1} \frac{1}{\beta!} \, D^{\beta}\psi(0) \cdot x^{\beta}+\sum_{|\beta|= \Lint} \frac{1}{\beta!} \, D^{\beta}\psi(\theta x) \cdot x^{\beta}. 
\end{align*}
Hence
\begin{align*}
 \Big|\psi(x)-\sum_{|\beta|\leq \Lint} \frac{1}{\beta!} \, D^{\beta}\psi(0) \cdot x^{\beta}\Big| &=\Big|\sum_{|\beta|= \Lint} \frac{1}{\beta!} \, \big(D^{\beta}\psi(\theta x)-D^{\beta}\psi(0)\big)x^{\beta}\Big| \\
&\leq c \cdot \|\psi|\hold[L]\| \cdot |x|^{L}.
\end{align*}
Using \eqref{MeanMomenter} for $k_j$ and $\Lint\leq N$ we can insert the polynomial terms into the integral and get
\begin{align}
\label{IntMomenter}
  \Big|\int_{d \cdot Q_{j,0}} \!\!\psi(x) k_j(x) \ dx \Big| \leq c \ \|\psi|\hold[L]\| \int_{d \cdot Q_{j,0}} \!\!\!\!|k_j(x)| \cdot |x|^{L} \ dx \leq C \cdot 2^{-jL} \|\psi|\hold[L]\|.
\end{align}
Hence $2^{-j\left(s+n\left(1-\frac{1}{p}\right)\right)}k_j$ fulfils condition \eqref{Atom3}. The constant $C$ does not depend on $j \in \N_0$.
\end{proof}
\begin{Bemerkung}
\label{MomentBem}
 If we take a look at the proof, we see that instead of \eqref{MeanMomenter} it suffices to have
\begin{align}
\label{MeanMomenter2}
 \Big|\int_{d \cdot Q_{j,0}} x^{\beta} k_j(x) \ dx \Big| \leq C \cdot 2^{-jL} \text{ if } |\beta|\leq \Lint. 
\end{align}
In fact, this condition is equivalent to condition \eqref{Atom3} for $k_j$ since $\|x^{\beta} \cdot \psi| \hold[L]\|\leq C$ if $|\beta|\leq \Lint$, where $\psi \in C^{\infty}(\re)$ is a cutoff function, i.e. with compact support and $\psi(x)=1$ for $x \in supp \ k$, hence for $x \in supp \ k_j$, too.
\end{Bemerkung}

Now we will see what happens if an atom is dilated.
\begin{Lemma}
 \label{Dilation}
Let $j\in \N_0$ and $j\leq \nu$. If $a_{\nu,m}$ is an $(s,p)_{K,L}$-atom located at the cube $Q_{\nu,m}$, then $2^{j(s-\frac{n}{p})}\cdot a_{\nu,m}(2^{-j}\cdot)$ is an $(s,p)_{K,L}$-atom located at $Q_{\nu-j,m}$. 
\end{Lemma}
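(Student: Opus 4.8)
The plan is to verify, one by one, the three defining conditions of Definition \ref{Atoms} for the rescaled function $b:=2^{j(s-\frac{n}{p})}\,a_{\nu,m}(2^{-j}\cdot)$, checking that it is an $(s,p)_{K,L}$-atom located at $Q_{\nu-j,m}$. The underlying reason everything works is that the dilation $x\mapsto 2^{-j}x$ maps the cube $Q_{\nu-j,m}$ (side length $2^{-(\nu-j)}$, center $2^{-(\nu-j)}m$) onto $Q_{\nu,m}$, hence $d\cdot Q_{\nu-j,m}$ onto $d\cdot Q_{\nu,m}$; this is where the hypothesis $j\le\nu$ enters (so that $\nu-j\in\N_0$ and the target cube is admissible). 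The normalization factor $2^{j(s-\frac{n}{p})}$ is chosen precisely so that the $\hold[K]$-bound comes out with the correct power of $2$ at level $\nu-j$.

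First I would treat the support condition \eqref{Atom1}: since $\operatorname{supp}a_{\nu,m}\subset d\cdot Q_{\nu,m}$, we have $b(x)\neq 0$ only if $2^{-j}x\in d\cdot Q_{\nu,m}$, i.e. $x\in 2^j\cdot(d\cdot Q_{\nu,m})=d\cdot Q_{\nu-j,m}$, which is exactly what is required. Second, for \eqref{Atom2} I would compute $b(2^{-(\nu-j)}\cdot)=2^{j(s-\frac{n}{p})}\,a_{\nu,m}(2^{-j}2^{-(\nu-j)}\cdot)=2^{j(s-\frac{n}{p})}\,a_{\nu,m}(2^{-\nu}\cdot)$, so that $\|b(2^{-(\nu-j)}\cdot)|\hold[K]\|=2^{j(s-\frac{n}{p})}\|a_{\nu,m}(2^{-\nu}\cdot)|\hold[K]\|\le C\,2^{j(s-\frac{n}{p})}2^{-\nu(s-\frac{n}{p})}=C\,2^{-(\nu-j)(s-\frac{n}{p})}$, which is the bound at level $\nu-j$. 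Note this step uses only that the Hölder norm is invariant under the particular rescaling built into \eqref{Atom2}, so no constants depending on $j$ appear.

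Third, the moment/cancellation condition \eqref{Atom3} is the one requiring a little care, though it is still routine. Given $\psi\in\hold[L]$, I would substitute $y=2^{-j}x$ in the integral:
\begin{align*}
\Big|\int_{d\cdot Q_{\nu-j,m}}\psi(x)\,b(x)\,dx\Big|
&=2^{j(s-\frac{n}{p})}\Big|\int_{d\cdot Q_{\nu-j,m}}\psi(x)\,a_{\nu,m}(2^{-j}x)\,dx\Big| \\
&=2^{j(s-\frac{n}{p})}2^{jn}\Big|\int_{d\cdot Q_{\nu,m}}\psi(2^{j}y)\,a_{\nu,m}(y)\,dy\Big|.
\end{align*}
Applying \eqref{Atom3} for $a_{\nu,m}$ with the test function $\psi(2^{j}\cdot)$ gives the bound $2^{j(s-\frac{n}{p})}2^{jn}\cdot C\,2^{-\nu\varkappa_L}\,\|\psi(2^{j}\cdot)|\hold[L]\|$, where $\varkappa_L=s+L+n(1-\frac1p)$. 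The key elementary estimate is the scaling behaviour of the Hölder norm under dilation by $2^{j}\ge 1$: since differentiating $\psi(2^jy)$ produces factors $2^{j|\alpha|}$ with $|\alpha|\le\lfloor L\rfloor$ and the $\lip[\{L\}]$-seminorm of $D^\alpha(\psi(2^j\cdot))$ scales by $2^{j(\lfloor L\rfloor+\{L\})}=2^{jL}$, one gets $\|\psi(2^{j}\cdot)|\hold[L]\|\le 2^{jL}\|\psi|\hold[L]\|$ (here $2^j\ge1$ ensures the lower-order terms are also controlled by $2^{jL}$, so no competing powers appear). Collecting exponents: $2^{j(s-\frac np)}\cdot 2^{jn}\cdot 2^{jL}\cdot 2^{-\nu\varkappa_L}=2^{j(s+L+n(1-\frac1p))}2^{-\nu\varkappa_L}=2^{j\varkappa_L-\nu\varkappa_L}=2^{-(\nu-j)\varkappa_L}$, which is exactly the required bound at level $\nu-j$. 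Thus all three conditions hold with a constant independent of $j$, completing the proof. The only mild obstacle is bookkeeping the Hölder-norm dilation estimate correctly (making sure the bound is $2^{jL}$ and not something larger, using $2^j\ge 1$); everything else is a direct change of variables.
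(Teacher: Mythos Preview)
Your proof is correct and follows essentially the same approach as the paper: verify the support and H\"older conditions directly (the paper dismisses these as ``easy to verify''), then for the moment condition perform the change of variables $y=2^{-j}x$, apply \eqref{Atom3} for $a_{\nu,m}$ to the dilated test function $\psi(2^{j}\cdot)$, use the scaling estimate $\|\psi(2^{j}\cdot)|\hold[L]\|\le 2^{jL}\|\psi|\hold[L]\|$, and collect the exponents. Your write-up simply fills in more detail than the paper's terse version.
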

\begin{proof}
 The support condition \eqref{Atom1} and the Hölder-condition \eqref{Atom2}  are easy to verify. Considering the moment condition \eqref{Atom3} we have 
\begin{align*}
 \Big|\int_{d \cdot Q_{\nu-j,m}} \psi(x) a_{\nu,m}(2^{-j}x) \ dx \Big|&=2^{jn} \cdot \Big|\int_{d \cdot Q_{\nu,m}} \psi(2^jx) a_{\nu,m}(x) \ dx \Big| \\
& \leq C \cdot 2^{j n} \cdot 2^{-\nu\Kap} \cdot \|\psi(2^j\cdot)| \hold[L]\| \\
& \leq C \cdot 2^{jn} \cdot 2^{-\nu\Kap}  \cdot 2^{jL} \cdot \|\psi|\hold[L]\| \\
& = C \cdot 2^{-(\nu-j)\Kap} \cdot 2^{-j(s-\frac{n}{p})} \cdot \|\psi|\hold[L]\|.
\end{align*}
This is what we wanted to prove.
\end{proof}

Now we come to the essential part - showing the atomic representation theorem. We will use an approach as in Theorem 13.8 of \cite{Tri97}. Using the more general form of the atoms we are able to simplify the proof: One has to estimate
\begin{align*}
 \int k_j (x-y) a_{\nu,m}(y) \ dy,
\end{align*}
where $k_j$ are the local means from Section \ref{LocalMeans} and $a_{\nu,m}$ are atoms located at $Q_{\nu,m}$. One has to distinguish between $j\geq \nu$ and $j<\nu$ as in the original proof - but now  both cases can be proven very similarly with our more general approach of atoms. 

At first we prove the convergence of the atomic series in ${\cal S}'(\re)$. 
\begin{Lemma}
\label{HarmS'-KonvAtom}
Let $0<p\leq\infty$ resp. $0<p<\infty$, $0<q\leq \infty$ and $s \in \R$. Let $K\geq 0$, $L\geq 0$ with $L> \sigma_p-s$. Then 
\begin{align*}
 \sum_{\nu=0}^{\infty} \sum_{m \in \Z^n} \lambda_{\nu,m} a_{\nu,m}
\end{align*}
converges unconditionally in ${\cal S}'(\re)$, where $a_{\nu,m}$ are $(s,p)_{K,L}$-atoms located at $Q_{\nu,m}$ and $\lambda \in b_{p,q}$ or $\lambda \in f_{p,q}$.
\end{Lemma}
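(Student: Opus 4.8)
The plan is to show that the partial sums of the series form a Cauchy net in $\mathcal{S}'(\re)$, which by completeness gives unconditional convergence; concretely it suffices to show that for every $\phi \in \mathcal{S}(\re)$ the scalar series $\sum_{\nu,m} \lambda_{\nu,m} \langle a_{\nu,m},\phi\rangle$ converges absolutely, with a bound uniform over $\phi$ in bounded subsets of $\mathcal{S}(\re)$. So the first step is to estimate $|\langle a_{\nu,m},\phi\rangle| = \left|\int_{d\cdot Q_{\nu,m}} \phi(x) a_{\nu,m}(x)\,dx\right|$ for a fixed test function $\phi$.

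For this estimate I would use the moment condition \eqref{Atom3} with a carefully chosen $\psi \in \hold[L]$. The natural choice is to take $\psi$ to be (a cutoff of) the Taylor polynomial remainder of $\phi$ around the center $2^{-\nu}m$ of the cube: writing $\phi = T^{\sint_L}_{x^0}\phi + R$ where $x^0 = 2^{-\nu}m$ and $T^{\sint_L}_{x^0}\phi$ is the Taylor polynomial of $\phi$ of degree $\Lint$ at $x^0$, we localize $R$ to $d\cdot Q_{\nu,m}$ by a smooth cutoff adapted to that cube. By Remark \ref{Ordered}, condition \eqref{Atom3} (equivalently \eqref{Atom32}) kills the polynomial part, so only the localized remainder survives, and on $d\cdot Q_{\nu,m}$ one has $\|R\,\chi_{d\cdot Q_{\nu,m}}|\hold[L]\| \lesssim 2^{-\nu(L - \text{something})}$-type bounds coming from $\phi \in \mathcal{S}$; combining with \eqref{Atom3} gives
\begin{align*}
 |\langle a_{\nu,m},\phi\rangle| \leq c \, 2^{-\nu(s+L+n(1-\frac1p))} \, 2^{\nu n}\, 2^{-\nu L}\, p_N(\phi)
 = c\, 2^{-\nu(s+n-\frac{n}{p})} \, p_N(\phi)
\end{align*}
for $\nu$ large, where $p_N$ is an appropriate seminorm of $\phi$; more precisely one keeps the decay factor coming from $L > \sigma_p - s$ and also exploits the rapid decay of $\phi$ in $m$ since $2^{-\nu}m$ ranges over $\re$. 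For $p \leq 1$ the gain $2^{-\nu(L+s-\sigma_p)}$ is what is needed; for $p \geq 1$ one simply uses the boundedness part of \eqref{Atom2} with $L$ possibly zero together with Hölder in $m$.

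Then the second step is to sum: one has $|\lambda_{\nu,m}| \leq \|\lambda|b_{p,q}\|$ or $\lesssim \|\lambda|f_{p,q}\|$ for each single coefficient (a crude embedding of $b_{p,q}$ or $f_{p,q}$ into $\ell_\infty$), and combined with the per-atom estimate, the $m$-sum converges because $\phi$ is Schwartz (giving decay in $|2^{-\nu}m|$, summable against the at most $2^{\nu n}$-type density of relevant cubes after the dyadic scaling is taken into account), and the $\nu$-sum converges because of the geometric factor $2^{-\nu(L+s-\sigma_p)}$ with exponent $>0$; absolute convergence for every $\phi$ and every rearrangement yields unconditional convergence in $\mathcal{S}'(\re)$. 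The main obstacle is bookkeeping the interplay between the normalization $\|a_{\nu,m}(2^{-\nu}\cdot)|\hold[K]\|$, the scaling of the cutoff/Taylor construction on $d\cdot Q_{\nu,m}$, and the precise power of $2^\nu$ in \eqref{Atom3}, so that the surviving exponent is exactly $-(L+s-\sigma_p)$ and hence negative by hypothesis $L > \sigma_p - s$; once the Taylor-remainder trick is set up correctly (as in Lemma \ref{Momenter}, just with $\phi$ in place of $\psi$ and the roles of $k_j$, $a_{\nu,m}$ swapped) the rest is routine, and the $b_{p,q}$ and $f_{p,q}$ cases are handled identically at this crude level since we only use $\ell_\infty$-control of the coefficients.
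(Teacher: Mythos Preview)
Your per-atom estimate is more complicated than necessary: since $\phi\in\mathcal{S}(\re)\subset\hold[L]$, you can apply \eqref{Atom3} \emph{directly} with $\psi=\phi$ (or $\psi=\phi$ times a cutoff) to get $|\langle a_{\nu,m},\phi\rangle|\leq C\,2^{-\nu\Kap}\|\phi\cdot\psi(2^{\nu}\cdot-m)|\hold[L]\|$, which is exactly what the paper does. The Taylor step you propose is precisely what was needed in Lemma~\ref{Momenter} to \emph{derive} \eqref{Atom3} from the classical moment conditions \eqref{Atom31}; here you already have \eqref{Atom3}, so the detour is redundant. (Also, your displayed computation producing $2^{-\nu(s+n-\frac{n}{p})}$ has lost the factor $2^{-\nu L}$; the correct per-atom bound is $2^{-\nu\Kap}$, possibly times $(1+|2^{-\nu}m|)^{-M}$.)

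The genuine gap is in your summation step. Using only the crude embedding $|\lambda_{\nu,m}|\leq\|\lambda|b_{p,q}\|$ into $\ell_\infty$ is not enough: once you sum over $m$ against the Schwartz weight $(1+|2^{-\nu}m|)^{-M}$, you pick up a factor $\sim 2^{\nu n}$ (the number of lattice points $m$ with $|2^{-\nu}m|\lesssim 1$), so the surviving $\nu$-decay is $2^{-\nu(\Kap-n)}$, not $2^{-\nu(L+s-\sigma_p)}$ as you claim. For $p\leq 1$ this would require $L>\sigma_p-s+n$, which is strictly stronger than the hypothesis $L>\sigma_p-s$. The paper instead exploits the $\ell_p$-structure of the sequence space: for $0<p\leq 1$ one uses $\sum_m|\lambda_{\nu,m}|\leq(\sum_m|\lambda_{\nu,m}|^p)^{1/p}$ (no $m$-decay needed at all, so the full $2^{-\nu\Kap}$ survives and $\Kap>0$ suffices), while for $1<p\leq\infty$ one uses H\"older in $m$ as you briefly mention but then abandon. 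Replace the $\ell_\infty$ bound by this case distinction and your argument goes through; as stated it does not close for small $p$.
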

\begin{proof}
Let $\varphi \in {\cal S}(\re)$. Having in mind \eqref{Atom1} and \eqref{Atom3} we obtain
\begin{align*}
 \sum_{m}\left| \int_{\re}  \lambda_{\nu,m} a_{\nu,m}(x)  \varphi(x) \ dx\right| \leq C \cdot 2^{-\nu \Kap} \sum_{m} |\lambda_{\nu,m}| \cdot  \|\varphi\cdot \psi(2^{\nu} \cdot -m)|\hold[L]\|,\\ 
\end{align*}
where $\psi \in C^{\infty}(\re)$, $\psi(x)=1$ for $x \in d \cdot Q_{0,0}$ and $supp \ \psi \in (d+1) \cdot Q_{0,0}$.

Observing $\Kap=s+L+n\left(1-\frac{1}{p}\right)$ and $L>\sigma_p-s$ we get
\begin{align}
\label{Kap}
 \Kap > \begin{cases}
                0, & 0<p\leq 1 \\
		n\left(1-\frac{1}{p}\right), & 1<p\leq \infty.
               \end{cases}
\end{align}
Furthermore, since $\varphi \in {\cal S}(\re)$ we have
\begin{align*}
  \|\varphi\cdot \psi(2^{\nu} \cdot -m)|\hold[L]\| \leq C_M \cdot \left(1+|2^{-\nu}m|\right)^{-M},
\end{align*}
where $M \in \N_0$ is at our disposal and $C_M$ does not depend on $\nu$ and $m$. 

Let at first be $0<p\leq 1$. Then we choose $M=0$ and get
\begin{align*}
 \sum_{m}\left| \int_{\re}  \lambda_{\nu,m} a_{\nu,m}(x)  \varphi(x) \ dx\right| \leq C' \cdot 2^{-\nu\Kap} \sum_{m} |\lambda_{\nu,m}| \leq C' \cdot 2^{-\nu \Kap} \left(\sum_{m} |\lambda_{\nu,m}|^p\right)^{\frac{1}{p}}.
\end{align*}
Summing up over $\nu \in \N_0$ using $\Kap>0$ we finally arrive at
\begin{align}
\label{Summing}
 \sum_{\nu} \sum_{m}\left| \int_{\re}  \lambda_{\nu,m} a_{\nu,m}(x)  \varphi(x) \ dx\right| \leq C' \cdot \|\lambda|b_{p,\infty}\|. 
\end{align}

In the case $1<p\leq \infty$ we choose $M \in \N_0$ such that $Mp'>n$, where $1=\frac{1}{p}+\frac{1}{p'}$. Using Hölder's inequality we get
\begin{align*}
 \sum_{m}\left| \int_{\re}  \lambda_{\nu,m} a_{\nu,m}(x)  \varphi(x) \ dx\right| &\leq C_M \cdot 2^{-\nu\Kap} \sum_{m} |\lambda_{\nu,m}| \cdot \left(1+|2^{-\nu}m|\right)^{-M} \\
&\leq C' \cdot 2^{-\nu \Kap} \left(\sum_{m} \left(1+|2^{-\nu}m|\right)^{-Mp'}\right)^{\frac{1}{p'}} \cdot \left(\sum_{m} |\lambda_{\nu,m}|^p\right)^{\frac{1}{p}} \\
&\leq C'' \cdot 2^{-\nu \Kap} \cdot 2^{\nu \frac{n}{p'}} \cdot \left(\sum_{m} |\lambda_{\nu,m}|^p\right)^{\frac{1}{p}} .
\end{align*}
By \eqref{Kap} the exponent is smaller than zero. Hence summing over $\nu \in \N_0$ gives the same result as in \eqref{Summing}. 

Since 
\begin{align*}
b_{p,q} \hookrightarrow b_{p,\infty} \text{ resp. } 
f_{p,q} \hookrightarrow f_{p,\infty} 
\end{align*}
we have shown the absolut and hence unconditional convergence in ${\cal S}'(\re)$.

\end{proof}

\begin{Theorem}
\label{AtomicRepr}
 (i) Let $0<p\leq \infty$, $0<q\leq \infty$ and $s \in \mathbb{R}$. Let $K,L \in \R$, $K,L\geq 0$, $K>s$ and $L >\sigma_p-s$. Then $f \in {\cal S}'(\mathbb{R}^n)$ belongs to $\bspq$ if and only if it can be represented as
\begin{align*}
 f=\sum_{\nu=0}^{\infty} \sum_{m \in \Z^n}\lambda_{\nu,m}a_{\nu,m} \quad \text{with convergence in } {\cal S}'(\mathbb{R}^n).
\end{align*} 
Here $a_{\nu,m}$ are $(s,p)_{K,L}$-atoms located at $Q_{\nu,m}$ (with the same constants $d>1$ and $C>0$ in Definition \ref{Atoms} for all $\nu \in \N_0, m \in \Z$) and $\|\lambda|b_{p,q}\| < \infty$ . Furthermore, we have in the sense of equivalence of norms
\begin{align*}
 \|f|\bspq\| \sim \inf \ \|\lambda|b_{p,q}\|,
\end{align*}
where the infimum on the right-hand side is taken over all admissible representations of $f$.

(ii) Let $0<p< \infty$, $0<q\leq \infty$ and $s \in \mathbb{R}$. Let $K,L \in \R$, $K,L\geq 0$, $K>s$ and $L >\sigma_{p,q}-s$. Then $f \in {\cal S}'(\mathbb{R}^n)$ belongs to $\fspq$ if and only if it can be represented as
\begin{align*}
 f=\sum_{\nu=0}^{\infty} \sum_{m \in \Z^n}\lambda_{\nu,m}a_{\nu,m} \quad \text{with convergence in } {\cal S}'(\mathbb{R}^n).
\end{align*}
Here $a_{\nu,m}$ are $(s,p)_{K,L}$-atoms located at $Q_{\nu,m}$ (with the same constants $d>1$ and $C>0$ in Definition \ref{Atoms} for all $\nu \in \N_0, m \in \Z$) and $\|\lambda|f_{p,q}\| < \infty$. Furthermore, we have in the sense of equivalence of norms
\begin{align*}
 \|f|\fspq\| \sim \inf \ \|\lambda|f_{p,q}\|,
\end{align*}
where the infimum on the right-hand side is taken over all admissible representations of $f$.
\end{Theorem}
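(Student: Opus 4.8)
The plan is to prove both directions of Theorem~\ref{AtomicRepr} through the local means characterisation of Proposition~\ref{RychkovLokalSatz}, fixing once and for all a kernel system $k_0,k$ as in Section~\ref{LocalMeans} with $N>s$ vanishing moments in the sense of \eqref{MeanMomenter}. I spell out the argument for $\bspq$; the $\fspq$-case is entirely parallel, the only structural change being that scalar $L_p$-estimates are replaced by the Fefferman--Stein vector-valued maximal inequality — which is precisely why the relevant threshold there is $\sigma_{p,q}$ rather than $\sigma_p$.

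\emph{Sufficiency.} Let $f=\sum_{\nu,m}\lambda_{\nu,m}a_{\nu,m}$ with $\|\lambda|b_{p,q}\|<\infty$. By Lemma~\ref{HarmS'-KonvAtom} (which uses $L>\sigma_p-s$) the series converges in $\mathcal{S}'(\re)$, so $k_j*f=\sum_{\nu,m}\lambda_{\nu,m}(k_j*a_{\nu,m})$ is meaningful. Everything rests on a pointwise bound for $(k_j*a_{\nu,m})(x)=\int k_j(x-y)a_{\nu,m}(y)\,dy$, obtained by distinguishing $j\le\nu$ from $j>\nu$. For $j\le\nu$ the local mean is the coarser object: apply the cancellation condition \eqref{Atom3} to $a_{\nu,m}$ with test function $\psi=k_j(x-\cdot)$, note $\|k_j(x-\cdot)|\hold[L]\|\lesssim 2^{j(n+L)}$ since $k$ is Schwartz, and get $|(k_j*a_{\nu,m})(x)|\lesssim 2^{-\nu\Kap}2^{j(n+L)}$, supported in a fixed dilate of $Q_{\nu,m}$. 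For $j>\nu$ the roles swap — by Lemma~\ref{Momenter} a renormalised $k_j$ is itself an atom — so one uses instead the moment conditions \eqref{MeanMomenter} of $k_j$ against the smooth function $a_{\nu,m}$: Taylor-expand $a_{\nu,m}$ about $x$, annihilate the polynomial part via \eqref{MeanMomenter}, and estimate the remainder through the Hölder condition \eqref{Atom2}, which yields $|(k_j*a_{\nu,m})(x)|\lesssim 2^{-\nu(s-\frac np)}2^{-(j-\nu)K}$, again supported in a fixed dilate of $Q_{\nu,m}$. In the generalised formulation of atoms the two cases are thus genuinely symmetric, which is the simplification over \cite[Theorem~13.8]{Tri97} announced before the theorem.

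What remains is bookkeeping. Multiplying by $2^{js}$, taking $L_p(\re)$-quasinorms, and tracking the overlap of the dilated cubes $Q_{\nu,m}$ at a fixed level, the two estimates turn into geometric factors $2^{-(\nu-j)\Kap}$ for $j\le\nu$ — which decays because $\Kap>0$ resp. $\Kap>n(1-\tfrac1p)$ by \eqref{Kap}, i.e. because $L>\sigma_p-s$ — and $2^{-(j-\nu)(K-s)}$ for $j>\nu$ — which decays because $K>s$. Summing the geometric factor over $\nu$ and then forming the weighted $\ell_q$-norm over $j$ (using the $p$- and $q$-triangle inequalities when $p\le 1$ resp. $q\le 1$, exactly as in the proof of Lemma~\ref{HarmS'-KonvAtom}) leaves $\lesssim\|\lambda|b_{p,q}\|$, so Proposition~\ref{RychkovLokalSatz} gives $\|f|\bspq\|\lesssim\|\lambda|b_{p,q}\|$, hence also $\|f|\bspq\|\lesssim\inf\|\lambda|b_{p,q}\|$.

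\emph{Necessity.} Conversely, for $f\in\bspq$ one needs a reconstruction formula. Using the non-degeneracy of $\widehat{k}$ near the origin built into Section~\ref{LocalMeans}, a standard Calderón-type argument with a Neumann-series correction produces a compactly supported smooth function $\Psi$ — which one may moreover require to have vanishing moments of order $\le\lfloor L\rfloor$, compatibly with compact support — and an identity $f=\sum_{\nu\ge0}\Psi_\nu*(k_\nu*f)$ in $\mathcal{S}'(\re)$ with $\Psi_\nu=2^{\nu n}\Psi(2^\nu\cdot)$; discretising each convolution over the lattice $2^{-\nu}\Z^n$ (the discretisation error again absorbed into the correction series) gives $f=\sum_{\nu,m}\lambda_{\nu,m}a_{\nu,m}$ with $a_{\nu,m}$ a translated and $2^{-\nu(s-n/p)}$-renormalised copy of $\Psi_\nu$ and $\lambda_{\nu,m}=c\,2^{\nu(s-n/p)}\sup_{y\in Q_{\nu,m}}|(k_\nu*f)(y)|$. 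That $a_{\nu,m}$ is an $(s,p)_{K,L}$-atom is then immediate: \eqref{Atom1} holds with a $d>1$ depending only on $\mathrm{supp}\,\Psi$; \eqref{Atom2} holds for \emph{every} $K\ge0$ since $\Psi$ is Schwartz (as in Lemma~\ref{Momenter}); and \eqref{Atom3} follows for $\nu\ge1$ from the vanishing moments of $\Psi$ via the Taylor computation of Lemma~\ref{Momenter}, while for $\nu=0$ it is automatic. Finally $\|\lambda|b_{p,q}\|\lesssim\|f|\bspq\|$: a maximal-function estimate controls $\sup_{Q_{\nu,m}}|k_\nu*f|$ by a local average, giving $\bigl(\sum_m|\lambda_{\nu,m}|^p\bigr)^{1/p}\sim 2^{\nu s}\|k_\nu*f|L_p(\re)\|$, so $\|\lambda|b_{p,q}\|\sim\|f|\bspq\|_{k_0,k}\sim\|f|\bspq\|$ by Proposition~\ref{RychkovLokalSatz}, completing the norm equivalence. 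The delicate point throughout is the sufficiency estimate — carrying out the two-regime analysis of $k_j*a_{\nu,m}$ so that the exponents produced by \eqref{Atom2}, \eqref{Atom3}, \eqref{MeanMomenter} assemble into genuinely summable geometric factors in $|j-\nu|$ exactly under $K>s$ and $L>\sigma_p-s$ (resp. $\sigma_{p,q}-s$), and, in the $\fspq$-scale, exchanging the $\ell_q(j)$-sum with the outer $L_p$-norm, which is the one place where real harmonic analysis, namely the Fefferman--Stein inequality with a power below $\min(p,q)$, is unavoidable and where the threshold $\sigma_{p,q}$ is forced.
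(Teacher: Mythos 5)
Your sufficiency argument is, in substance, the proof the paper gives: the local--means characterization of Proposition \ref{RychkovLokalSatz}, the case split $j\le\nu$ versus $j>\nu$, and the symmetric use of \eqref{Atom3} for one factor against \eqref{Atom2} for the other (formalized in the paper through Lemmata \ref{Momenter} and \ref{Dilation}), producing exactly the geometric factors $2^{-(j-\nu)(K-s)}$ and $2^{-(\nu-j)(L+s)}2^{-(\nu-j)n}2^{\nu n/p}$ that make $K>s$ and $L>\sigma_p-s$ (resp.\ $\sigma_{p,q}-s$) the natural hypotheses. Two small cautions there: for $j\le\nu$ the convolution $k_j*a_{\nu,m}$ is supported in $c\,2^{\nu-j}\cdot Q_{\nu,m}$, not in a fixed dilate of $Q_{\nu,m}$, and the resulting $2^{(\nu-j)n}$-fold overlap at level $j$ is precisely what the factor $2^{-(\nu-j)n}$ and the condition on $L$ must absorb, so your phrase ``fixed dilate'' should be corrected even though your final exponent bookkeeping via \eqref{Kap} is right; and you should say explicitly that the uniform bound for finite partial sums is upgraded to the full series via Lemma \ref{HarmS'-KonvAtom} and the Fatou property (Proposition \ref{Fatou}), which is how the paper closes this step. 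Where you genuinely diverge is the necessity direction: the paper disposes of it in a few lines by citing the classical atomic decomposition of \cite[Theorem 13.8]{Tri97} and observing that every classical $(s,p)_{K',L'}$-atom is an $(s,p)_{K,L}$-atom in the generalized sense (Remarks \ref{usualform} and \ref{Ordered}, i.e.\ the conditions are ordered in $K$ and $L$), whereas you rebuild the decomposition from scratch via a Rychkov-type local reproducing formula $f=\sum_\nu\Psi_\nu*(k_\nu*f)$, lattice discretization, and a Peetre-maximal-function estimate for $\|\lambda|b_{p,q}\|\lesssim\|f|\bspq\|$. Your route is self-contained and would in principle let the whole theorem stand without invoking the classical result, but it imports the nontrivial machinery (Neumann-series correction, construction of a compactly supported $\Psi$ with prescribed vanishing moments, the maximal inequality) that the paper deliberately avoids re-proving; the paper's soft argument buys brevity at the cost of resting on \cite{Tri97}. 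Either way the statement is established, so I see no genuine gap, only the two imprecisions noted above.
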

\begin{proof}
 We rely on the proof of Theorem 13.8 of \cite{Tri97}, now modified keeping in mind the more general conditions \eqref{Atom2} and \eqref{Atom3} instead of \eqref{Atom21} and \eqref{Atom31}. There are two directions we have to prove. 

At first, let us assume that $f$ from $\bspq$ or $\fspq$ is given. Then we know from Theorem 13.8 of \cite{Tri97} that $f$ can be written as an atomic decomposition, with atoms now fulfilling conditions \eqref{Atom21} and \eqref{Atom31} for given natural numbers $K'>s$ and $L'+1>\sigma_{p}-s$ resp. $L'+1>\sigma_{p,q}-s$. Hence, because of $C^{K'}(\re) \subset \hold[K']$, condition \eqref{Atom2} is fulfilled for all $K\leq K'$.

Conditions \eqref{Atom3} are generalizations of the classical moment conditions \eqref{Atom31} and are ordered in $L$, see Remark \ref{Ordered}. 

Thus, every classical $(s,p)_{K',L'}$-atom is an $(s,p)_{K,L}$ atom in the sense of definition \ref{Atoms} for $K\leq K'$ and $L\leq L'+1$ and this immediately shows that we find a decomposition of $f$ from $\bspq$ or $\fspq$ for arbitrary $K$ and $L$ in terms of the general atoms we introduced.

Now we come to the essential part of the proof. We have to show that, although we weakened the conditions on the atoms, a linear combination of atoms is still an element of $\bspq$ resp. $\fspq$. We modify the proof of Theorem 13.8 of \cite{Tri97} or into \cite{Sch10} where some minor technical details are modified (for the more general vector-valued case). There one uses the equivalent characterization by local means $k_0, k_j:=2^{jn} k(2^{j}\cdot)$ with a suitably large $N$(see Proposition \ref{RychkovLokalSatz}) and distinguishes between the cases $j\geq \nu$ and $j<\nu$. In both cases the crucial part is the estimate of
\begin{align*}
 \int k_j (x-y) a_{\nu,m}(y) \ dy,
\end{align*}
where $a_{\nu,m}$ is an $(s,p)_{K,L}$-atom centered at $Q_{\nu,m}$.
The idea now is to use that not only $a_{\nu,m}$ but also $k_j$ can been interpreted as atoms and admit estimates as in \eqref{Atom2} and \eqref{Atom3}, see Lemma \ref{Momenter}.

Let at first be $j\geq \nu$. The function $k$ has compact support and fulfils moment conditions \eqref{Atom31}. At first we transform the integral, having in mind the form of condition \eqref{Atom2} of $a_{\nu,m}$,
\begin{align*}
 2^{js} \int k_j (y) a_{\nu,m}(x-y) \ dy= 2^{js} \int k_{j-\nu}(y)a_{\nu,m}(x-2^{-\nu}y) \ dy. 
\end{align*}
Surely, this integral vanishes for $x \notin c \cdot Q_{\nu,m}$ for a suitable $c>0$ because of $j\geq \nu$. So we concentrate on $x\in c \cdot Q_{\nu,m}$: By Lemmata \ref{Momenter} and \ref{Dilation} the function 
\begin{align*}
2^{-(j-\nu)\left(s+n\left(1-\frac{1}{p}\right)\right)} \cdot k_{j-\nu}=2^{-(j-\nu)\left(s+n\left(1-\frac{1}{p}\right)\right)} \cdot 2^{-\nu n} \cdot k_{j}(2^{-\nu}\cdot)
\end{align*}
is an $(s,p)_{M,N}$-atom located at $Q_{j-\nu,0}$ for $M$ arbitrarily large and $N$ from \eqref{MeanMomenter}, so that also $N$ may be arbitrarily large, but fixed. Now we will use the moment condition \eqref{Atom3} for $k_{j-\nu}$ and the Hölder-condition \eqref{Atom2} for $a_{\nu,m}$. Hence, with $\psi(y)=a_{\nu,m}(x-2^{-\nu}y)$ and $N \geq K$ we have
\begin{align*}
 2^{js} \Big| \int k_{j-\nu}(y) a_{\nu,m}(x-2^{-\nu}y) \ dy \Big| &\leq C \cdot 2^{js} \cdot 2^{-(j-\nu)K} \cdot \|a_{\nu,m}(x-2^{-\nu}\cdot)|\hold[K]\| \\
&= C \cdot 2^{js} \cdot 2^{-(j-\nu)K} \cdot \|a_{\nu,m}(2^{-\nu}\cdot)|\hold[K]\| \\
&\leq C \cdot 2^{js} \cdot 2^{-(j-\nu)K} \cdot 2^{-\nu(s-\frac{n}{p})} \\
&= C' \cdot 2^{-(j-\nu)(K-s)} \cdot \chi^{(p)}(c\cdot Q_{\nu,m}),
\end{align*}
where $\chi^{(p)}(c\cdot Q_{\nu,m})$ is the $L_p(\re)$-normalized characteristic function of $c \cdot Q_{\nu,m}$. This inequality is certainly true for $x \notin c \cdot Q_{\nu,m}$. Hence (13.37) in \cite{Tri97} is shown.

Now let $j<\nu$. We will interchange the roles of $k_j$ and $a_{\nu,m}$ using condition \eqref{Atom2} now for $k_j$ and \eqref{Atom3} for $a_{\nu,m}$. Hence we start with
\begin{align*}
 2^{js} \int k_j (x-y) a_{\nu,m}(y) \ dy =2^{js} \int k(2^{j}x-y)a_{\nu,m}(2^{-j}y) \ dy.
\end{align*}
Surely, this integral vanishes for $x \notin c\cdot 2^{\nu-j}\cdot Q_{\nu,m}$. So we concentrate on $x\in c \cdot 2^{\nu-j}\cdot Q_{\nu,m}$: By Lemma \ref{Dilation} we know that $2^{j(s-\frac{n}{p})} \cdot a_{\nu,m}(2^{-j}\cdot)$ is an $(s,p)_{K,L}$-atom located at $Q_{\nu-j,m}$ while $k$ is an $(s,p)_{M,N}$-atom located at $Q_{0,0}$. Thus, using \eqref{Atom2} for $k$ with $M\geq L$, we get
\begin{align*}
 2^{js} \Big|\int k(2^{j}x-y)a_{\nu,m}(2^{-j}y) \ dy \Big| &\leq c \cdot 2^{js} \cdot 2^{-(\nu-j)\Kap} \cdot 2^{-j(s-\frac{n}{p})} \cdot \|k(2^{j}x-\cdot)|\hold[L]\| \\
&\leq c \cdot 2^{-(\nu-j)(L+s)} \cdot 2^{\nu\frac{n}{p}} \cdot 2^{-(\nu-j)n}  \\
&= c \cdot 2^{-(\nu-j)(L+s)} \cdot 2^{\nu\frac{n}{p}} \cdot 2^{-(\nu-j)n}\cdot \chi(c \cdot 2^{\nu-j}\cdot Q_{\nu,m}).
\end{align*}
where $\chi(c \cdot 2^{\nu-j}\cdot Q_{\nu,m})$ is the characteristic function of $c \cdot 2^{\nu-j}\cdot Q_{\nu,m}$. This estimate is the same as (13.41) combined with (13.42) in \cite{Tri97} or (72) and (73) in \cite{Sch10}, observing that we use $L$ instead of $L+1$ in the atomic representation theorem. 

Starting with these two estimates we can follow the steps in \cite{Tri97} or \cite{Sch10} and finish the proof, since $K>s$ and $L>\sigma_p-s$ resp. $L>\sigma_{p,q}-s$. Strictly speaking, we arrive (in the $\bspq$-case) at
\begin{align*}
 \Big\|\sum_{\nu\leq \nu_0} \sum_{|m|\leq m_0}\lambda_{\nu,m} a_{\nu,m} \big|\bspq\Big\|\leq C \cdot \|\lambda|b_{p,q}\|
\end{align*}
for all $\nu_0,m_0 \in \N_0$ with a constant $C$ independent of $\nu_0$ and $m_0$. Using Lemma \ref{HarmS'-KonvAtom} and the Fatou property of the spaces $\bspq$ resp. $\fspq$ (see Proposition \ref{Fatou}) we are finally done, i.e.
\begin{align*}
 \Big\|\sum_{\nu\in \N_0} \sum_{m \in \Z^n}\lambda_{\nu,m} a_{\nu,m} \big|\bspq\Big\|\leq C \cdot \|\lambda|b_{p,q}\|.
\end{align*}
\end{proof}

\begin{Bemerkung}
 The conditions $\eqref{Atom2}$ and $\eqref{Atom3}$ for the atomic representation theorem can be slightly modified: If $K>0$, then it is possible to replace $\|\cdot|\hold[K]\|$ by $\|\cdot|B_{\infty,\infty}^{K}(\re)\|$ in condition $\eqref{Atom2}$. This is clear for $K \notin \N$, see Remark \ref{HoldB}. If $K \in \N$, this follows from 
\begin{align*}
\hold[K]  \hookrightarrow B_{\infty,\infty}^{K}(\re) \hookrightarrow \hold[K-\varepsilon]
\end{align*}
for $\varepsilon>0$. 

A similar result holds true for $L>0$, $L \notin \N$ and condition \eqref{Atom3} by trivial means. If $L \in \N$, then $\|\cdot|\hold[L]\|$ can be replaced by $\|\cdot|C^L(\re)\|$, where the condition needs to be true for all $\psi \in C^L(\re)$. This follows from the fact, that both conditions imply \eqref{Atom32}. Hence they are equivalent. 

It is not clear to the author whether $\|\cdot|\hold[L]\|$ can be replaced by $\|\cdot|B_{\infty,\infty}^{L}(\re)\|$ for $L \in \N$. 

\end{Bemerkung}

\begin{Bemerkung}
 In the proof of Theorem \ref{AtomicRepr} we assumed that the local means $k_j$ are arbitrarily often differentiable and fulfil as many moment conditions as we wanted. But if we take a look into the proof, we see that we did not use the specific structure $k_j=2^{jn}k(2^j\cdot)$. It is sufficient to know that there are constants $c$ and $C$ such that for all $j\in \N_0$ it holds $supp \ k_j \subset c\cdot Q_{j,0}$, that
\begin{align}
 \label{LocalMean2}
\|k_j(2^{-j}\cdot)|\hold[M]\| &\leq C \cdot  2^{jn}
\end{align}
with $M \geq L$ and that for every $\psi \in \hold[N]$ it holds 	
\begin{align}
\label{LocalMean3} \left| \int_{d \cdot Q_{j,0}} \psi(x) k_j(x) \ dx \right|\leq C \cdot  2^{-jN} \cdot \|\psi|\hold[N]\|
\end{align}
with $N \geq K$ because the atomic conditions \eqref{Atom2} and \eqref{Atom3} are ordered in $N$ and $M$, see Remark \ref{Ordered}. As before, condition \eqref{LocalMean3} can be strengthened by
\begin{align*}
 \int x^{\beta}k_j(x) \ dx = 0 \text{ for all } |\beta|< N.
\end{align*}
Through these considerations the idea arises how to prove a counterpart of Theorem \ref{AtomicRepr} for the local mean characterization in \cite[Theorem 1.15]{Tri08} without further substantial efforts. This is done in the following Corollary, including some technical issues concerning the definition of a dual pairing (see \cite[Remark 1.14]{Tri08}). It is obvious that the original version of Theorem 1.15 in \cite{Tri08} is just some kind of modification of this Corollary. 
\end{Bemerkung}
\begin{Folgerung}
\label{FolgLocal}
(i) Let $0<p\leq \infty$, $0<q\leq \infty$ and $s \in \mathbb{R}$. Let $M,N\in \R$, $M,N\geq 0$, $M>\sigma_p-s$ and $N>s$. Assume that for all $j \in \N_0$ it holds that $k_j \in \hold[M]$, $supp \ k_j \subset c\cdot Q_{j,0}$ and $k_j$ fulfils  \eqref{LocalMean2} and \eqref{LocalMean3}. Then there is a constant c such that
\begin{align*}
  \|f|\bspq\|_k:=\|k_0*f|L_p(\re)\|+\left(\sum_{j=1}^{\infty} 2^{jsq}\|k_j*f|L_p(\re)\|^q\right)^{\frac{1}{q}} \leq c \cdot \|f|\bspq\|
\end{align*}
(modified for $q=\infty$) for all $f \in \bspq$.

(ii) Let $0<p< \infty$, $0<q\leq \infty$ and $s \in \mathbb{R}$. Let $M,N\in \R$, $M,N\geq 0$, $M>\sigma_{p,q}-s$ and $N>s$. Assume that for all $j \in \N_0$ it holds that  $k_j \in \hold[M]$, $supp \ k_j \subset c\cdot Q_{j,0}$ and $k_j$ fulfils \eqref{LocalMean2} and \eqref{LocalMean3}. Then there is a constant c such that
\begin{align*}
  \|f|\fspq\|_k:&=\|k_0*f|L_p(\re)\|+\bigg\|\Big(\sum_{j=1}^{\infty} 2^{jsq} \left|(k_j*f)(\cdot)\right|^q \Big)^{\frac{1}{q}}\Big|L_p(\re)\bigg\| \\
 &\leq c \cdot \|f|\fspq\|
\end{align*}
(modified for $q=\infty$) for all $f \in \fspq$.
\end{Folgerung}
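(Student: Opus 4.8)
The plan is to reduce the statement to the inequality that was already established inside the proof of Theorem \ref{AtomicRepr}, exploiting the remark preceding the Corollary: the only properties of the local means $k_j$ used there were the support condition $supp\ k_j \subset c\cdot Q_{j,0}$, the H\"older bound \eqref{LocalMean2}, and the moment condition \eqref{LocalMean3}, together with the fact that the atomic conditions are ordered in $K$ and $L$. So the first step is to take an arbitrary $f \in \bspq$ (resp. $\fspq$) and use Theorem \ref{AtomicRepr} to write $f = \sum_{\nu,m}\lambda_{\nu,m}a_{\nu,m}$ with $(s,p)_{K,L}$-atoms $a_{\nu,m}$, where I choose $K = N$ and $L = M$ — which is legitimate since $N>s$ and $M>\sigma_p - s$ (resp. $M>\sigma_{p,q}-s$) are exactly the hypotheses of the Corollary — and with $\|\lambda|b_{p,q}\| \le c\|f|\bspq\|$ (resp. the $f_{p,q}$-analogue).

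The second step is to compute $k_j * f$. Formally $k_j * f = \sum_{\nu,m}\lambda_{\nu,m}(k_j * a_{\nu,m})$, and the core of the argument is precisely the pointwise estimate of $\int k_j(x-y)a_{\nu,m}(y)\,dy$ carried out in the proof of Theorem \ref{AtomicRepr}, split into the cases $j\ge\nu$ and $j<\nu$. In the case $j\ge\nu$ one interprets (a renormalized) $k_{j-\nu}$ as an $(s,p)_{M',N'}$-atom with $M',N'$ large and uses \eqref{Atom3} for $k_{j-\nu}$ against \eqref{Atom2} for $a_{\nu,m}$ with $N' \ge K = N$; in the case $j<\nu$ one interchanges the roles, using \eqref{Atom2} for $k_j$ with $M' \ge L = M$ against \eqref{Atom3} for $a_{\nu,m}$. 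Here $M'$ and $N'$ play the role of the "arbitrarily large but fixed" parameters, and since the H\"older and moment conditions are ordered (Remark \ref{Ordered}) the bounds \eqref{LocalMean2} and \eqref{LocalMean3} on $k_j$ supply exactly what is needed. This yields, exactly as in (13.37) and (13.41)--(13.42) of \cite{Tri97}, the two estimates
\begin{align*}
 2^{js}|k_j * a_{\nu,m}(x)| &\le C\cdot 2^{-(j-\nu)(N-s)}\cdot \chi^{(p)}_{\nu,m}(x) \qquad (j\ge\nu),\\
 2^{js}|k_j * a_{\nu,m}(x)| &\le C\cdot 2^{-(\nu-j)(M+s)}\cdot 2^{(\nu-j)(\frac{n}{p}-n)}\cdot\chi(c\cdot 2^{\nu-j}Q_{\nu,m})(x) \qquad (j<\nu),
\end{align*}
and summing over $\nu,m$ and then over $j$ — invoking the standard sequence-space estimates from the proof of Theorem \ref{AtomicRepr}, which converge because $N>s$ and $M>\sigma_p-s$ (resp. $M>\sigma_{p,q}-s$) — gives $\|f|\bspq\|_k \le c\|\lambda|b_{p,q}\| \le c'\|f|\bspq\|$, and likewise in the $F$-case.

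The one genuinely new point, and the step I expect to require the most care, is a technical one flagged in the remark and in \cite[Remark 1.14]{Tri08}: since the $k_j$ are now only in $\hold[M]$ with $M$ finite (not Schwartz functions), one must make sure the convolution $k_j * f$ and the "dual pairing" $\langle f, k_j(x-\cdot)\rangle$ are well defined for $f\in\bspq\subset\mathcal{S}'(\re)$. The cleanest way to handle this is to first do everything for $f$ a finite atomic sum (where convergence of $\sum_{\nu,m}\lambda_{\nu,m}(k_j*a_{\nu,m})$ is automatic, each $a_{\nu,m}$ being an honest $L_1$-function of compact support), obtain the inequality with a constant independent of the truncation, and then pass to the limit: by Lemma \ref{HarmS'-KonvAtom} the truncated sums converge to $f$ in $\mathcal{S}'(\re)$, hence $k_j * (\text{truncation}) \to k_j * f$ in $\mathcal{S}'$, and a Fatou-type argument in the sequence spaces $b_{p,q}$ resp. $f_{p,q}$ (analogous to the use of the Fatou property in the proof of Theorem \ref{AtomicRepr}) upgrades the truncated inequality to the full one. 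This simultaneously resolves the definitional subtlety and completes the proof.
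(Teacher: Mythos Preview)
Your overall strategy matches the paper's proof exactly: decompose $f$ optimally into $(s,p)_{N,M}$-atoms via Theorem \ref{AtomicRepr}, reuse the two pointwise estimates from that proof (which only need \eqref{LocalMean2}, \eqref{LocalMean3} and the support condition on $k_j$), and then handle the passage to the limit. The core estimates you write down are correct.

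The gap is in your final paragraph, precisely at the word ``hence''. You argue that the truncated atomic sums converge to $f$ in $\mathcal{S}'(\re)$ and conclude that $k_j * (\text{truncation}) \to k_j * f$ in $\mathcal{S}'(\re)$. But convergence in $\mathcal{S}'$ only gives convergence of pairings against Schwartz functions, and $k_j(x-\cdot)$ is merely in $\hold[M]$, not in $\mathcal{S}(\re)$; convolution with such a $k_j$ is not a continuous operation on $\mathcal{S}'(\re)$, so this implication is exactly the point that fails. In other words, you have correctly identified the definitional subtlety but your proposed resolution is circular: it presupposes that $k_j * f$ is already a well-defined object to which the truncations converge.

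The paper closes this gap differently. Looking back into the proof of Lemma \ref{HarmS'-KonvAtom}, one extracts the quantitative estimate
\[
\sum_{\nu}\sum_{m}\left|\int_{\re}\lambda_{\nu,m}a_{\nu,m}(x)\varphi(x)\,dx\right|\le C'\,\|\varphi|\hold[M-\varepsilon]\|\cdot\|\lambda|b_{p,\infty}\|
\]
valid for any compactly supported $\varphi\in\hold[M]$ with $M-\varepsilon>\sigma_p-s$ (the constant depending on $supp\ \varphi$). Applied to $\varphi=k_j(x-\cdot)$ this shows, first, that the series $\sum_{\nu,m}\lambda_{\nu,m}(k_j*a_{\nu,m})(x)$ converges absolutely for every $x$, which \emph{defines} $(k_j*f)(x)$; and second, that this value is independent of the chosen atomic decomposition, because the estimate is continuous in the $\hold[M-\varepsilon]$-norm and $C^\infty(\re)$ is dense in $\hold[M]$ with respect to that weaker norm, so one can approximate $k_j(x-\cdot)$ by Schwartz functions for which the pairing is determined by $f\in\mathcal{S}'(\re)$ alone. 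With pointwise convergence of the truncations in hand, the paper then invokes the ordinary (measure-theoretic) Fatou lemma on the $L_p$-integrals --- not the Fatou property of $\bspq$ --- to pass from \eqref{AbschAtom} to the full inequality.
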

\begin{proof}
 There is nearly nothing left to prove because the crucial steps were done in the proof before: Let $f \in \bspq$ (analogously for $f \in \fspq$) be given. By Theorem \ref{AtomicRepr} we can represent $f \in \bspq$ by an ''optimal'' atomic decomposition 
\begin{align*}
 f=\sum_{\nu=0}^{\infty} \sum_{m \in \mathbb{Z}^n}\lambda_{\nu,m}a_{\nu,m}, 
\end{align*}
where $a_{\nu,m}$ is an $(s,p)_{N,M}$-atom located at $Q_{\nu,m}$ and $\|f|\bspq\| \sim \|\lambda|b_{p,q}\|$ (with constants independent of $f$).

But, by the second step of the proof of Theorem \ref{AtomicRepr} and the considerations in the succeeding remark we have

 \begin{align}
\label{AbschAtom}
 \Big\|\sum_{\nu\leq \nu_0} \sum_{|m|\leq m_0}\lambda_{\nu,m} a_{\nu,m} \big|\bspq \Big\|_k\leq C \cdot \|\lambda|b_{p,q}\| \sim \|f|\bspq\|
\end{align}
for all $\nu_0,m_0 \in \N_0$ with a constant $C$ independent of $\nu_0$ and $m_0$. 

Finally, we use a similar duality argument as in \cite[Remark 1.14]{Tri08} or \cite[Section 5.1.7]{Tri06} to justify the dual pairing of $k_j$ and $f$. Looking into the proof of Lemma \ref{HarmS'-KonvAtom}, we see that
\begin{align}
\label{KonvB}
 \sum_{\nu} \sum_{m}\left| \int_{\re}  \lambda_{\nu,m} a_{\nu,m}(x) \varphi(x) \ dx\right| \leq C' \cdot \|\varphi|\hold[M-\varepsilon]\|\cdot \|\lambda|b_{p,\infty}\|
\end{align}
for $\varphi \in \hold[M]$ with compact support, $M-\varepsilon\geq 0$ and $M-\varepsilon>\sigma_p-s$, where $C'$ depends on the support of $\varphi$. This includes the functions $k_j$ for $j \in \N_0$. Because of this absolut convergence the dual pairing of $f$ and $\varphi$ is given by
\begin{align*}
 \lim_{m_0,\nu_0 \rightarrow \infty} \, \sum_{\nu\leq \nu_0} \sum_{|m|\leq m_0 } \int_{\re}  \lambda_{\nu,m} a_{\nu,m}(x) \varphi(x) \ dx .
\end{align*}

Furthermore, for two different atomic decompositions of $f$ these limits are the same: By definition of a distribution $f \in {\cal S}'(\re)$ and Lemma \ref{HarmS'-KonvAtom} this is valid for $\varphi \in {\cal S}(\re)$. For arbitrary $\varphi \in \hold[M]$ with compact support this follows by \eqref{KonvB} and density arguments because $C^{\infty}(\re)$ is dense in $\hold[M]$ with respect to the norm of $\hold[M-\varepsilon]$. For instance, this can be seen using
\begin{align*}
 \hold[M] \hookrightarrow B_{\infty,\infty}^M(\re) \hookrightarrow B_{\infty,q}^{M-\varepsilon}(\re) \hookrightarrow  B_{\infty,\infty}^{M-\varepsilon}=\hold[M-\varepsilon]
\end{align*}
for $M-\varepsilon \notin \N_0$ and the fact that $C^{\infty}(\re)$ is dense in $\bspq$ if $q<\infty$. 

Hence we have 
\begin{align*}
 \sum_{\nu\leq \nu_0} \sum_{|m|\leq m_0}\lambda_{\nu,m} \left(a_{\nu,m} * k_j\right)(x) \rightarrow (f * k_j) (x) \text{ for } \nu_0, m_0 \rightarrow \infty 
\end{align*}
for all $x \in \re$. Using the standard Fatou lemma and \eqref{AbschAtom} we finally get 
\begin{align*}
 \|f|\bspq\|_k\leq C \cdot \|\lambda|b_{p,q}\| \sim \|f|\bspq\|.
\end{align*}
\end{proof}

\section{Key theorems}
\subsection{Pointwise multipliers}
\label{PointwiseSect}
Triebel proved in Section 4.2 of \cite{Tri92} the following assertion.
\begin{Theorem}
\label{pointwise}
 Let $s \in \mathbb{R}$ and $0<q\leq \infty$.

(i) Let $0<p\leq \infty$ and $\rho > \max(s,\sigma_p-s)$. Then there exists a positive number $c$ such that
\begin{align*}
 \|\varphi f|\bspq\| \leq c \|\varphi|\hold[\rho]\| \cdot \|f|\bspq\|
\end{align*}
for all $\varphi \in \hold[\rho]$ and all $f \in \bspq$.

(ii) Let $0<p<\infty$ and $\rho > \max(s,\sigma_{p,q}-s)$. Then there exists a positive number $c$ such that
\begin{align*}
 \|\varphi f|\fspq\| \leq c \|\varphi|\hold[\rho]\| \cdot \|f|\fspq\|
\end{align*}
for all $\varphi \in \hold[\rho]$ and all $f \in \fspq$.
\end{Theorem}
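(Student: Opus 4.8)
The plan is to use the atomic representation theorem (Theorem \ref{AtomicRepr}) as the main tool: write $f$ as an optimal atomic decomposition, show that multiplying each atom by $\varphi$ yields (up to a constant depending on $\|\varphi|\hold[\rho]\|$) again an atom of the same type, and conclude. So first I would take $f \in \bspq$ and, using Theorem \ref{AtomicRepr}(i), write $f=\sum_{\nu,m}\lambda_{\nu,m}a_{\nu,m}$ with $(s,p)_{K,L}$-atoms $a_{\nu,m}$ located at $Q_{\nu,m}$, where $K,L$ are chosen with $K>s$, $L>\sigma_p-s$, and crucially $K\leq \rho$, $L\leq \rho-s$ — this is possible precisely because $\rho>\max(s,\sigma_p-s)$ gives room to pick such $K$ and $L$ (for $L$: we need $\sigma_p-s<L$ and $L\leq\rho-s$, i.e. $\sigma_p<\rho$, which holds; for $K$: $s<K\leq\rho$, which holds). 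Then $\|\lambda|b_{p,q}\|\lesssim\|f|\bspq\|$.

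Next I would show that $\varphi\cdot a_{\nu,m}$ is (a constant multiple of) an $(s,p)_{K,L}$-atom located at $Q_{\nu,m}$, with the same $d$ and with constant $\leq c\,C\,\|\varphi|\hold[\rho]\|$. The support condition \eqref{Atom1} is immediate since $\varphi$ does not enlarge the support. For the Hölder condition \eqref{Atom2}: $(\varphi a_{\nu,m})(2^{-\nu}\cdot)=\varphi(2^{-\nu}\cdot)\cdot a_{\nu,m}(2^{-\nu}\cdot)$, and since $\hold[K]$ is a multiplication algebra (for $K\geq 0$) with $\|fg|\hold[K]\|\leq c\|f|\hold[K]\|\,\|g|\hold[K]\|$, and $\|\varphi(2^{-\nu}\cdot)|\hold[K]\|\leq \|\varphi|\hold[K]\|\leq c\|\varphi|\hold[\rho]\|$ (dilation by $2^{-\nu}$ with $\nu\geq0$ does not increase the Hölder norm, and $\hold[\rho]\hookrightarrow\hold[K]$ since $K\leq\rho$), we get $\|(\varphi a_{\nu,m})(2^{-\nu}\cdot)|\hold[K]\|\leq c\|\varphi|\hold[\rho]\|\,C\,2^{-\nu(s-n/p)}$. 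For the moment condition \eqref{Atom3}: for $\psi\in\hold[L]$,
\begin{align*}
 \Big|\int_{d\cdot Q_{\nu,m}}\psi(x)\,\varphi(x)\,a_{\nu,m}(x)\,dx\Big|
 \leq C\,2^{-\nu\Kap}\,\|\psi\varphi|\hold[L]\|
 \leq c\,C\,2^{-\nu\Kap}\,\|\varphi|\hold[L]\|\,\|\psi|\hold[L]\|,
\end{align*}
again using that $\hold[L]$ is a multiplication algebra, and then $\|\varphi|\hold[L]\|\leq c\|\varphi|\hold[\rho]\|$ since $L\leq\rho-s\leq\rho$. So $\varphi a_{\nu,m}$ is an $(s,p)_{K,L}$-atom with constant $c\,C\,\|\varphi|\hold[\rho]\|$.

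Finally, since $f=\sum\lambda_{\nu,m}a_{\nu,m}$ converges in ${\cal S}'(\re)$ and multiplication by the fixed function $\varphi\in\hold[\rho]\subset L_\infty(\re)$ is continuous on ${\cal S}'(\re)$ against test functions with the needed regularity (more carefully: $\varphi f=\sum\lambda_{\nu,m}(\varphi a_{\nu,m})$ with convergence in ${\cal S}'$, which follows from Lemma \ref{HarmS'-KonvAtom} applied to the atoms $\varphi a_{\nu,m}$), the right-hand side is an admissible atomic decomposition of $\varphi f$ with coefficients $\lambda_{\nu,m}$ rescaled by the constant absorbed into the atoms; equivalently keep $\lambda_{\nu,m}$ and note the atoms have constant $c\|\varphi|\hold[\rho]\|C$. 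By Theorem \ref{AtomicRepr}(i) applied in the reverse direction,
\begin{align*}
 \|\varphi f|\bspq\|\leq c\,\|\varphi|\hold[\rho]\|\cdot\|\lambda|b_{p,q}\|\leq c'\,\|\varphi|\hold[\rho]\|\cdot\|f|\bspq\|.
\end{align*}
Part (ii) for $\fspq$ is identical, using Theorem \ref{AtomicRepr}(ii), $f_{p,q}$ in place of $b_{p,q}$, and $\sigma_{p,q}$ in place of $\sigma_p$ (so one needs $\rho>\max(s,\sigma_{p,q}-s)$ to choose $L$ with $\sigma_{p,q}-s<L\leq\rho-s$). The main obstacle is the bookkeeping in the atom-times-$\varphi$ step — specifically verifying that $\hold[K]$ and $\hold[L]$ are multiplication algebras with the product estimate for real (non-integer) indices, that dilation by $2^{-\nu}$, $\nu\geq0$, is norm-nonincreasing on these spaces, and the embeddings $\hold[\rho]\hookrightarrow\hold[K]$, $\hold[\rho]\hookrightarrow\hold[L]$ — together with checking that the constraints on $K,L$ ($K>s$, $L>\sigma_p-s$ resp. $\sigma_{p,q}-s$, and $K\leq\rho$, $L\leq\rho-s$) can be simultaneously met exactly under the hypothesis $\rho>\max(s,\sigma_p-s)$; the ${\cal S}'$-convergence and the final invocation of the atomic theorem are then routine.
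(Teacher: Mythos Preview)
Your approach is essentially the paper's: decompose $f$ atomically, verify that $\varphi\cdot a_{\nu,m}$ is again an $(s,p)_{K,L}$-atom (this is Lemma~\ref{ProdAtom}, resting on the multiplication-algebra property of $\hold$, Lemma~\ref{helpHoelder}), and re-apply Theorem~\ref{AtomicRepr}.

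There is, however, a genuine bookkeeping error in your parameter choice. You impose $L\leq\rho-s$, but your argument only ever uses $L\leq\rho$ (you yourself write ``since $L\leq\rho-s\leq\rho$'' and then invoke only the embedding $\hold[\rho]\hookrightarrow\hold[L]$). The constraint $L\leq\rho-s$ is not merely unnecessarily strong: your claim ``$\sigma_p<\rho$, which holds'' is false in general. Take $n=1$, $p=\tfrac12$ (so $\sigma_p=1$), $s=0.6$, $\rho=0.7$. Then $\max(s,\sigma_p-s)=\max(0.6,0.4)=0.6<\rho$, so the hypothesis is satisfied, yet $\sigma_p=1>0.7=\rho$, and there is no $L$ with $0.4=\sigma_p-s<L\leq\rho-s=0.1$. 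The fix is trivial: require only $L\leq\rho$ (as in the paper, where Lemma~\ref{ProdAtom} assumes $\rho\geq\max(K,L)$); then $\sigma_p-s<L\leq\rho$ is satisfiable precisely because $\rho>\sigma_p-s$.

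A secondary point: your treatment of the identity $\varphi f=\sum\lambda_{\nu,m}(\varphi a_{\nu,m})$ is too quick. Multiplication by a general $\varphi\in\hold[\rho]$ is \emph{not} a continuous operation on ${\cal S}'(\re)$, so one cannot simply pass to the limit. The paper addresses this in Remark~\ref{ProdDef}: first take $\varphi\in C^{\infty}(\re)$, where multiplication \emph{is} continuous on ${\cal S}'(\re)$ and the estimate follows with $\|\varphi|\hold[\rho-\varepsilon]\|$ on the right; then pass to general $\varphi\in\hold[\rho]$ by density of $C^{\infty}(\re)$ in $\hold[\rho]$ with respect to the $\hold[\rho-\varepsilon]$-norm.
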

He excluded the cases $\rho \in \N$. This is not necessary in our considerations.

The very first idea to prove this result is to take an atomic decomposition of $f$, to multiply it by $\varphi$ and to prove that the resulting sum is again a sum of atoms. Hence one has to check whether a product of an $(s,p)_{K,L}$-atom and a function $\varphi$ is still an $(s,p)_{K,L}$-atom. 

But there was a problem: Moment conditions like $\eqref{Atom31}$ are (in general) destroyed by multiplication with $\varphi$. So the atomic approach in \cite{Tri92} only worked when no moment conditions were required, hence if $s>\sigma_p$ resp. $s>\sigma_{p,q}$, and the full generality of Theorem \ref{pointwise} had to be obtained by an approach via local means. Looking at condition $\eqref{Atom3}$ instead the situation when multiplying by $\varphi$ is now different. 

Furthermore, the atomic approach only worked for $\varphi \in C^{k}(\re)$ with $k \in \N$ and $k>s$ having in mind condition \eqref{Atom21}. Now we are able to give weaker conditions using the new atomic approach with condition \eqref{Atom2}.

We start with a first standard analytical observation.
\begin{Lemma}
\label{helpHoelder}
 Let $s\geq 0$. There exists a constant $c>0$ such that for all $f,g \in \hold$ the product $f\cdot g$ belongs to $\hold$ and it holds
\begin{align*}
  \|f\cdot g|\hold\| \leq  c \cdot \|f|\hold\| \cdot  \|g|\hold\|. 
\end{align*}
\end{Lemma}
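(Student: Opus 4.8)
The statement is the standard fact that Hölder spaces $\hold$ form a multiplication algebra for $s\geq 0$. I would treat the integer part and the fractional part of the norm separately, since $\|f|\hold\|=\|f|C^{\sint}(\re)\|+\sum_{|\alpha|=\sint}\|D^{\alpha}f|\lip[\srest]\|$. The plan is to first dispose of the two trivial extreme cases, namely $s=0$ (then $\hold[0]=L_\infty(\re)$ and $\|fg|L_\infty\|\leq\|f|L_\infty\|\|g|L_\infty\|$, so $c=1$ works) and more generally the sub-case where only the $C^k$-part matters; then handle the genuine Hölder seminorm contribution.

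\textbf{Key steps.} First I would record the two elementary building blocks. (1) For $0<\sigma\leq 1$ and bounded continuous $f,g$, the Leibniz-type estimate for the Hölder seminorm: writing $f(x)g(x)-f(y)g(y)=(f(x)-f(y))g(x)+f(y)(g(y)-g(x))$ and dividing by $|x-y|^{\sigma}$ gives $\|fg|\lip[\sigma]\|\leq\|f|\lip[\sigma]\|\,\|g|L_\infty\|+\|f|L_\infty\|\,\|g|\lip[\sigma]\|$, which in particular is bounded by $2\|f|\hold[\sigma]\|\,\|g|\hold[\sigma]\|$ using $\|\cdot|L_\infty\|\leq\|\cdot|\hold[\sigma]\|$. (2) For $f,g\in C^k(\re)$ the classical Leibniz rule $D^{\alpha}(fg)=\sum_{\beta\leq\alpha}\binom{\alpha}{\beta}D^{\beta}f\,D^{\alpha-\beta}g$ immediately yields $\|fg|C^k(\re)\|\leq c_k\|f|C^k(\re)\|\,\|g|C^k(\re)\|$ with $c_k$ depending only on $n$ and $k$. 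Then, for general $s>0$ with $k=\sint$ and $\sigma=\srest$, I would estimate $\|fg|\hold\|=\|fg|C^{k}(\re)\|+\sum_{|\alpha|=k}\|D^{\alpha}(fg)|\lip[\sigma]\|$: the first summand is controlled by (2); for each $\alpha$ with $|\alpha|=k$, expand $D^{\alpha}(fg)$ by Leibniz into a finite sum of terms $D^{\beta}f\,D^{\alpha-\beta}g$, and apply (1) to each such product of two bounded continuous functions, noting $D^{\beta}f\in\lip[\sigma]$ when $|\beta|=k$ (from the seminorm part of $\|f|\hold\|$) and $D^{\beta}f\in L_\infty$, even Lipschitz hence in $\lip[\sigma]$, when $|\beta|<k$ (from $f\in C^k$, with $D^{\beta}f$ and $\nabla D^{\beta}f$ bounded, so $\|D^{\beta}f|\lip[\sigma]\|\leq c\|f|C^{k}(\re)\|$ on $\re$ — here one uses that a function with bounded gradient is globally Hölder of any order $\leq 1$). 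Collecting the finitely many terms, each bounded by $c\,\|f|\hold\|\,\|g|\hold\|$, gives the claim with a constant $c=c(n,s)$.

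\textbf{Main obstacle.} The only point requiring a little care is the embedding-type fact used for the lower-order derivatives: on all of $\re$, a bounded function with bounded first derivatives lies in $\lip[\sigma]$ for every $0<\sigma\leq 1$, with seminorm controlled by $\max(\|D^{\beta}f|L_\infty\|,\|\nabla D^{\beta}f|L_\infty\|)$. This follows by the dichotomy $|x-y|\leq 1$ versus $|x-y|>1$: in the first case use the mean value theorem to get $|D^{\beta}f(x)-D^{\beta}f(y)|\leq\|\nabla D^{\beta}f|L_\infty\|\,|x-y|\leq\|\nabla D^{\beta}f|L_\infty\|\,|x-y|^{\sigma}$; in the second case bound the difference by $2\|D^{\beta}f|L_\infty\|\leq 2\|D^{\beta}f|L_\infty\|\,|x-y|^{\sigma}$. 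Everything else is the routine Leibniz bookkeeping, so this is a short argument; the subtlety is purely that the estimate must be global and uniform so that the constant $c$ depends only on $n$ and $s$, not on $f$ or $g$.
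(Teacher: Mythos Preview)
Your proof is correct and follows exactly the approach the paper has in mind: the paper's own proof consists of the single sentence ``This can be proven using standard arguments, especially Leibniz formula,'' and you have simply written out those standard arguments in detail. (There is a harmless sign slip in your telescoping identity --- it should read $f(y)(g(x)-g(y))$ --- but the resulting bound is unaffected.)
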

\begin{bew}
This can be proven using standard arguments, especially Leibniz formula.
\end{bew}
Now we are ready to prove Theorem \ref{pointwise}. This is done by the following lemma together with Theorem \ref{AtomicRepr} using the mentioned technique of atomic decompositions. For some further technicalities see the upcoming Remark \ref{ProdDef} or \cite[4.2.2, Remark 1] {Tri92}. This covers also the well-definedness of the product.

\begin{Lemma}
\label{ProdAtom}
 There exists a constant $c$ with the following property: For all $\nu \in \N_0$, $m \in \Z$, all $(s,p)_{K,L}$-atoms $a_{\nu,m}$ with support in $d \cdot Q_{\nu,m}$ and all $\varphi \in \hold[\rho]$ with $\rho \geq \max(K,L)$ the product
\begin{align*}
 c \cdot \|\varphi|\hold[\rho]\|^{-1} \cdot \varphi \cdot a_{\nu,m} 
\end{align*}
is an $(s,p)_{K,L}$-atom with support in $d\cdot Q_{\nu,m}$.
\end{Lemma}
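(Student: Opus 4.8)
The strategy is to verify, one at a time, the three defining properties of an $(s,p)_{K,L}$-atom (Definition \ref{Atoms}) for the rescaled product $b_{\nu,m}:=c\,\|\varphi|\hold[\rho]\|^{-1}\cdot\varphi\cdot a_{\nu,m}$, where the constant $c$ will be chosen at the end to absorb all the absolute constants that appear. The support condition \eqref{Atom1} is immediate: multiplying by $\varphi$ does not enlarge the support, so $\operatorname{supp} b_{\nu,m}\subset d\cdot Q_{\nu,m}$. The H\"older condition \eqref{Atom2} is where Lemma \ref{helpHoelder} enters. Since $\rho\geq K$ we have $\hold[\rho]\hookrightarrow\hold[K]$ (these spaces are ordered; cf. Remark \ref{HoldB} and the embedding $B^\rho_{\infty,\infty}\hookrightarrow B^K_{\infty,\infty}$ used elsewhere), so $\|\varphi(2^{-\nu}\cdot)|\hold[K]\|\leq C\|\varphi|\hold[K]\|\leq C\|\varphi|\hold[\rho]\|$ uniformly in $\nu$ — here one uses that dilation by $2^{-\nu}$ with $\nu\geq0$ does not increase the $\hold[K]$-norm. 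Then by Lemma \ref{helpHoelder},
\begin{align*}
\|(\varphi\cdot a_{\nu,m})(2^{-\nu}\cdot)|\hold[K]\|
&=\|\varphi(2^{-\nu}\cdot)\cdot a_{\nu,m}(2^{-\nu}\cdot)|\hold[K]\|\\
&\leq c\,\|\varphi(2^{-\nu}\cdot)|\hold[K]\|\cdot\|a_{\nu,m}(2^{-\nu}\cdot)|\hold[K]\|
\leq c\,C\,\|\varphi|\hold[\rho]\|\cdot 2^{-\nu(s-\frac np)},
\end{align*}
so after dividing by $c\,C\,\|\varphi|\hold[\rho]\|$ the right-hand side is exactly $2^{-\nu(s-n/p)}$, as required.

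\textbf{The moment condition.} The main point — and the whole reason the generalized atoms of Definition \ref{Atoms} make this argument work where the classical moment conditions \eqref{Atom31} fail — is condition \eqref{Atom3}. Given $\psi\in\hold[L]$, write $\int\psi\cdot(\varphi a_{\nu,m})=\int(\psi\varphi)\cdot a_{\nu,m}$ and apply \eqref{Atom3} for $a_{\nu,m}$ with test function $\psi\varphi\in\hold[L]$:
\begin{align*}
\left|\int_{d\cdot Q_{\nu,m}}\psi(x)\,\varphi(x)\,a_{\nu,m}(x)\,dx\right|
\leq C\cdot 2^{-\nu\Kap}\,\|\psi\varphi|\hold[L]\|
\leq c\,C\cdot 2^{-\nu\Kap}\,\|\psi|\hold[L]\|\cdot\|\varphi|\hold[L]\|,
\end{align*}
where the last step is Lemma \ref{helpHoelder} again, and $\|\varphi|\hold[L]\|\leq\|\varphi|\hold[\rho]\|$ since $\rho\geq L$. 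Dividing by $c\,C\,\|\varphi|\hold[\rho]\|$ gives the bound $2^{-\nu\Kap}\|\psi|\hold[L]\|$ demanded in \eqref{Atom3}. Choosing $c$ to be the reciprocal of the largest of the constants produced in the three steps (it depends only on $s$, $p$, $n$, $K$, $L$ through Lemma \ref{helpHoelder} and the dilation estimates, not on $\nu$, $m$, $\varphi$ or $a_{\nu,m}$) makes $b_{\nu,m}$ satisfy \eqref{Atom1}--\eqref{Atom3} with the same $d$ and with atom-constant $C=1$, completing the proof.

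\textbf{Expected obstacle.} There is no deep obstacle — the lemma is designed precisely so that all three conditions are stable under multiplication by a sufficiently smooth function. The only points requiring a little care are bookkeeping ones: checking that the dilation $a\mapsto a(2^{-\nu}\cdot)$ interacts correctly with the product (it factors through both factors, so the rescaling built into \eqref{Atom2} is respected), and making sure that the single constant $c$ can be taken independent of $\nu$, $m$, $\varphi$ and the particular atom. One should also note for \eqref{Atom3} that $\psi\varphi$ genuinely lies in $\hold[L]$ with the claimed norm bound — this is exactly the content of Lemma \ref{helpHoelder} for $s=L$ (and if $L=0$, condition \eqref{Atom3} is vacuous by the Remark after Definition \ref{Atoms}, so nothing is needed there). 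Well-definedness of the product $\varphi\cdot a_{\nu,m}$ and, later, of $\varphi\cdot f$ is a separate measure-theoretic/distributional issue handled by the cited Remark \ref{ProdDef} and \cite[4.2.2]{Tri92}, not by this lemma.
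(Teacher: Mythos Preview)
Your proof is correct and follows essentially the same route as the paper: verify \eqref{Atom1} trivially, use Lemma \ref{helpHoelder} on the dilated product for \eqref{Atom2}, and for \eqref{Atom3} absorb $\varphi$ into the test function via $\int\psi(\varphi a)=\int(\psi\varphi)a$ and apply \eqref{Atom3} for $a_{\nu,m}$ together with Lemma \ref{helpHoelder}. The only difference is cosmetic: you make explicit the embedding $\hold[\rho]\hookrightarrow\hold[K]$ (resp.\ $\hold[L]$) and the fact that dilation by $2^{-\nu}$ does not increase the $\hold[K]$-norm, whereas the paper leaves these implicit and stops at $\|\varphi|\hold[K]\|$ and $\|\varphi|\hold[L]\|$ in the final estimates.
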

\begin{bew}
Regarding the conditions \eqref{Atom2} on the derivatives Lemma \ref{helpHoelder} gives
\begin{align*}
 \|(\varphi \cdot a)(2^{-\nu}\cdot)|\hold[K]\| &\leq  c \cdot \|\varphi(2^{-\nu}\cdot)|\hold[K]\| \cdot \|a(2^{-\nu}\cdot)|\hold[K]\|  \\
&\leq c' \cdot \|\varphi|\hold[K]\| \cdot 2^{-\nu(s-\frac{n}{p})}. 
\end{align*}
Now we come to the preservation of the moment conditions \eqref{Atom3}. By our assumptions there exists a constant $C>0$ such that for every $\psi \in \hold[L]$ it holds 	
\begin{align*}
\left| \int_{d \cdot Q_{\nu,m}} \psi(x) a(x) \ dx \right|\leq C \cdot  2^{-\nu\Kap} \|\psi|\hold[L]\|.
\end{align*}
Using this inequality now for $\psi \cdot \varphi$ instead of $\psi$ together with Lemma \ref{helpHoelder} it follows
\begin{align*}
\left| \int_{d \cdot Q_{\nu,m}} \psi(x) \big(\varphi(x)\cdot a(x)\big) \ dx \right|&=\left| \int_{d \cdot Q_{\nu,m}} \big(\psi(x) \cdot \varphi(x)\big)a(x) \ dx \right| \\
&\leq C \cdot 2^{-\nu\Kap} \|\psi\cdot\varphi|\hold[L]\| \\
&\leq C' \cdot 2^{-\nu\Kap} \|\psi|\hold[L]\| \cdot \|\varphi|\hold[L]\|.
\end{align*}
Hence our lemma is shown.
\end{bew}
\begin{Bemerkung}
 This is the more general version of part 1 of Lemma 1 in \cite{Skr98} using now the wider atomic approach from \ref{Atoms} which yields a stronger result than in \cite{Skr98}.  
\end{Bemerkung}

\begin{Bemerkung}
 \label{ProdDef}
As at the end of Corollary \ref{FolgLocal} we have to deal with some technicalities. We concentrate on the $\bspq$-case, the $\fspq$-case is nearly the same. In principle, Lemma \ref{ProdAtom} shows that
\begin{align}
\label{AtomKonv}
 \sum_{\nu} \sum_{m}\lambda_{\nu,m} (a_{\nu,m} \cdot \varphi) 
\end{align}
converges unconditionally in ${\cal S}'(\re)$ where
\begin{align*}
 f=\sum_{\nu} \sum_{m}\lambda_{\nu,m} a_{\nu,m} \text{ in } {\cal S}'(\re)
\end{align*}
and the limit belongs to $\bspq$ if $f$ belongs to $\bspq$. 

To define the product of $\varphi$ and $f$ as this limit, we have to show that the limit does not depend on the atomic decomposition we chose for $f$. 

Hence we are pretty much in the same situation as at the end of Corollary \ref{FolgLocal}: Let at first be $\varphi \in C^{\infty}$. Then the multiplication with $\varphi$ is a continuous operator mapping  ${\cal S}'(\re)$ to ${\cal S}'(\re)$. So \eqref{AtomKonv} converges to $\varphi \cdot f$ for all choices of atomic decompositions of $f$. Using Lemma \ref{ProdAtom} and the standard Fatou lemma we get
\begin{align*}
 \|\varphi \cdot f|\bspq\| \leq c \cdot \|\varphi|\hold[\rho]\| \cdot \|f|\bspq\|
\end{align*}
for all $f \in \bspq$. 

For arbitrary $\varphi \in \hold[\rho]$ we use a density argument similar to that at the end of Corollary \ref{FolgLocal}. We know 
\begin{align*}
 \|\varphi^* \cdot f|\bspq\| \leq c \cdot \|\varphi^*|\hold[\rho-\varepsilon]\| \cdot \|f|\bspq\|
\end{align*}
for $\varphi^* \in C^{\infty}(\re)$, $\rho$ as in Lemma \ref{ProdAtom} and $\varepsilon$ small enough. Now using the density of $C^{\infty}(\re)$ in $\hold[\rho]$ with respect to the norm of $\hold[\rho-\varepsilon]$ the uniqueness of the product and 
\begin{align*}
 \|\varphi \cdot f|\bspq\| \leq c \cdot \|\varphi|\hold[\rho-\varepsilon]\| \cdot \|f|\bspq\| \leq  c \cdot \|\varphi|\hold[\rho]\| \cdot \|f|\bspq\|
\end{align*}
follows.

\end{Bemerkung}

\begin{Bemerkung}
 Since   
\begin{align*}
 \hold[L] \hookrightarrow B_{\infty,\infty}^{L}(\re) \hookrightarrow \hold[L-\varepsilon]
\end{align*}
for $L-\varepsilon \geq 0$, we can replace $\|\varphi|\hold[\rho]\|$ by $\|\varphi|B_{\infty,\infty}^{\rho}(\re)\|$, even by $\|\varphi|B_{\infty,q}^{\rho}(\re)\|$ for arbitrary $0<q \leq \infty$ in Lemma \ref{pointwise}.

The condition $\rho > \max(s,\sigma_{p,q}-s)$ for the $\fspq$-spaces in Theorem \ref{pointwise} can be replaced by $\rho > \max(s,\sigma_{p}-s)$. This is a matter of complex interpolation, see the proof of the corollary in Section 4.2.2 of \cite{Tri92}.
\end{Bemerkung}

\begin{Bemerkung} 
Our Theorem \ref{pointwise} is a special case of Theorem 4.7.1 in \cite{RS96}: By Remark \ref{HoldB} it holds $\hold[\rho]=B_{\infty,\infty}^{\rho}(\re)$ for $\rho>0$ and $\rho \notin \N$. So, let $f \in \bspq$ or $f \in \fspq$ as well as $\varphi \in \hold[\rho]$ with $\rho>s$. Then $\varphi \in B_{\infty,\infty}^{\rho'}(\re)$ for $s<\rho'<\rho$. By Theorem 4.7.1 
of \cite{RS96} it holds 
\begin{align*}
 \bspq \cdot B_{\infty,\infty}^{\rho'}(\re) \hookrightarrow \bspq \quad \text{resp.}\quad \fspq \cdot B_{\infty,\infty}^{\rho'}(\re) \hookrightarrow \fspq
\end{align*}
if
\begin{align*}
 \rho'>s \quad \text{and} \quad s+\rho'> \sigma_p \quad \Leftrightarrow \quad \rho'>s \quad \text{and} \quad \rho'>\sigma_p-s.
\end{align*}
In case of $\bspq$ these are the same conditions as in Theorem \ref{pointwise} - in case of $\fspq$ these are even better (no dependency on $q$).

It was not the idea of this paper to give such a detailed and comprehensive treatise as in Runst' and Sickel's book \cite{RS96} but to show an application of the more general atomic decompositions where the proof is easy to follow (see Triebel \cite[Section 4.1]{Tri92}). 
\end{Bemerkung}

\subsection{Diffeomorphisms}
We want to study the behaviour of the mapping
\begin{align*}
 D_{\varphi}: f \mapsto f(\varphi(\cdot)), 
\end{align*}
where $f$ is an element of the function space $\bspq$ resp. $\fspq$ and $\varphi: \re \rightarrow \re$ is a suitably smooth map. 

One would like to deal with this problem analogously to the pointwise multiplier problem in Section \ref{PointwiseSect}. Hence we start with an atomic decomposition of $f$ and composed with $\varphi$. Then we are confronted with functions of the form $a_{\nu,m}\circ \varphi$ originating from the atoms $a_{\nu,m}$. This was the idea of Section 4.3.1 in Triebel \cite{Tri92}. But in general, moment conditions of type \eqref{Atom31} are destroyed by this operator. So $s>\sigma_p$ resp. $s>\sigma_{p,q}$ was necessary. As we will see, conditions like \eqref{Atom3} behave more friendly under diffeomorphisms. 
 
Furthermore, we are confronted with more difficulties than in section \ref{PointwiseSect} because the support of an atom changes remarkably. In particular, after composing with $\varphi$ two or more atoms can be associated with the same cube $Q_{\nu,m}$ which is not possible in the atomic representation theorem \ref{AtomicRepr}. This has not been considered in detail in Section 4.3.1 by Triebel \cite{Tri92} while there is some work done in the proof of Lemma 3 by Skrzypczak \cite{Skr98}.

The special case of bi-Lipschitzian maps, also called Lipschitz diffeomorphisms, is treated in Section 4.3 by Triebel \cite{Tri02}. The main theorem there is used to obtain results for characteristic functions of Lipschitz domains as pointwise multipliers in $\bspq$ and $\fspq$.

\begin{Definition}
\label{LipDef}
Let $\rho\geq 1$. 

(i) Let $\rho=1$. We say that the map $\varphi: \re \rightarrow \re$ is a $\rho$-diffeomorphism if $\varphi$ is a bi-Lipschitzian map, i.e. that there are constants $c_1,c_2>0$ such that
\begin{align}
 \label{biLip}
 c_1\leq \frac{|\varphi(x)-\varphi(y)|}{|x-y|} \leq c_2. 
\end{align}
for all $x,y \in \re$ with $0<|x-y|\leq 1$. 

(ii) Let $\rho>1$. We say that the one-to-one map $\varphi: \re \rightarrow \re$ is a $\rho$-diffeomorphism if the components $\varphi_i$ of $\varphi(x)=(\varphi_1(x),\ldots,\varphi_n(x))$ have classical derivatives up to order $\rint$ with $\frac{\partial \varphi_i}{\partial x_j} \ \in \hold[\rho-1]$ for all $i,j\in \{1,\ldots,n\}$ and if $|\det J(\varphi)(x)| \geq c$ for some $c>0$ and all $x \in \re$. Here $J(\varphi)(x)$ stands for the Jacobian matrix of $\varphi$ at the point $x \in \re$. 
\end{Definition}
\begin{Bemerkung}
 It does not matter, whether we assume $\eqref{biLip}$ for all $x,y \in \re$ with $x\neq y$ or for all $x,y \in \re$ with $0<|x-y|<c$ for a constant $c>0$. This is obvious for the upper bound. For the lower bound we have to use the upper bound of the bi-Lipschitzian property of the inverse $\varphi^{-1}$ of $\varphi$. Its existence independent of the given exact definition of a bi-Lipschitzian map is shown in the following lemma.
\end{Bemerkung}

\begin{Lemma}
\label{DiffRem}
 Let $\rho\geq 1$.

(i) If $\varphi$ is a $1$-diffeomorphism, then $\varphi$ is bijective and $\varphi^{-1}$ is a $1$-diffeomorphism, too.

(ii) Let $\rho>1$. If $\varphi$ is a $\rho$-diffeomorphism, then its inverse $\varphi^{-1}$ is a $\rho$-diffeomorphism as well. 

(iii) If $\varphi$ is a $\rho$-diffeomorphism, then $\varphi$ is a $\rho'$-diffeomorphism for $1\leq \rho' \leq \rho$. Hence $\varphi$ is a bi-Lipschitzian map.
\end{Lemma}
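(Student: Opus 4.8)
The plan is to prove the three parts of Lemma~\ref{DiffRem} in the order $(i)$, $(ii)$, $(iii)$, since $(iii)$ will use the inverse-function facts from $(i)$ and $(ii)$, and $(i)$ is the case that must be handled by bare hands (no classical differentiability available).

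\textbf{Part $(i)$.} First I would establish injectivity: the lower bound in \eqref{biLip} immediately gives $\varphi(x)\neq\varphi(y)$ whenever $0<|x-y|\le 1$, and a chaining argument along a segment subdivided into pieces of length $\le 1$ upgrades this to global injectivity (with a global lower Lipschitz bound $c_1'|x-y|\le|\varphi(x)-\varphi(y)|$ for all $x,y$, at the cost of a slightly smaller constant). Surjectivity is the part needing a genuine argument: $\varphi$ is continuous and proper (the global lower bound forces $|\varphi(x)|\to\infty$ as $|x|\to\infty$), so $\varphi(\re)$ is closed; it is also open by invariance of domain (or, more elementarily, by a degree/Brouwer argument using that $\varphi$ is a local homeomorphism onto its image because it is a proper injective continuous map between manifolds of the same dimension). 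Since $\re$ is connected, $\varphi(\re)=\re$. Then $\varphi^{-1}$ exists, and from $c_1\le|\varphi(x)-\varphi(y)|/|x-y|\le c_2$ one reads off directly that $\varphi^{-1}$ satisfies the two-sided bound with constants $1/c_2$ and $1/c_1$; after the same localization/chaining remark this is a $1$-diffeomorphism.

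\textbf{Part $(ii)$.} Here $\varphi$ is $C^{\rint}$ with $\partial\varphi_i/\partial x_j\in\hold[\rho-1]$ and $|\det J(\varphi)|\ge c>0$. I would first note that $\varphi$ is in particular bi-Lipschitzian: the upper bound comes from boundedness of the first derivatives (which sit in $\hold[\rho-1]\subset L_\infty$), and the lower bound together with global injectivity follows by the classical inverse function theorem applied locally (the Jacobian is everywhere invertible) plus the same connectedness/properness argument as in $(i)$ to get a global inverse. So $\varphi$ is a bijection and $\psi:=\varphi^{-1}$ is Lipschitz with $|\det J(\psi)(\varphi(x))|=|\det J(\varphi)(x)|^{-1}\le 1/c$. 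The remaining work is to show $\partial\psi_i/\partial x_j\in\hold[\rho-1]$. The identity $J(\psi)(y)=J(\varphi)(\psi(y))^{-1}$ expresses the first derivatives of $\psi$ as entries of the inverse matrix, i.e. as a polynomial in the entries of $J(\varphi)\circ\psi$ divided by $\det(J(\varphi))\circ\psi$, which is bounded away from zero. One then bootstraps: $J(\varphi)\circ\psi$ is Lipschitz (composition of $\hold[\rho-1]\supset\text{Lip}$ with Lipschitz $\psi$), Cramer's rule and the chain rule show $J(\psi)$ lies in the same Hölder class along the scale, and for the top Hölder exponent $\{\rho-1\}$ one invokes that $\hold[\rho-1]$ is an algebra (Lemma~\ref{helpHoelder}) stable under composition with bi-Lipschitz maps of lower smoothness — iterating the chain/Leibniz rule $\rint-1$ times and estimating the final Hölder seminorm. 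Thus $\psi$ is a $\rho$-diffeomorphism.

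\textbf{Part $(iii)$.} For $\rho'=1$ this is exactly the bi-Lipschitz property already extracted above. For $1<\rho'<\rho$ it follows from the embedding $\hold[\rho-1]\hookrightarrow\hold[\rho'-1]$ (monotonicity of Hölder spaces in the smoothness index, immediate from Definition~\ref{Hoelder} since $\rint[\rho']\le\rint$ and truncating a higher-order Hölder condition gives a lower-order one): the components of $\varphi$ are a fortiori $C^{\rint[\rho']}$ with first derivatives in $\hold[\rho'-1]$, and the determinant lower bound is unchanged. Taking $\rho'=1$ gives the final assertion that $\varphi$ is bi-Lipschitzian.

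\textbf{Main obstacle.} The routine parts are the embeddings and the algebra/chain-rule bookkeeping in $(ii)$; the one genuinely delicate point is establishing \emph{surjectivity} of $\varphi$ in $(i)$ purely from the bi-Lipschitz inequality, since one cannot appeal to the smooth inverse function theorem there. I expect to handle it via the ``proper + locally injective continuous $\Rightarrow$ open'' route (invariance of domain), or alternatively by an explicit Banach fixed-point / continuation argument solving $\varphi(x)=y$; either way this is the step that deserves a careful line or two rather than a one-word dismissal.
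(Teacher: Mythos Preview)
Your proposal is correct and follows essentially the same route as the paper: invariance of domain for surjectivity in (i), the inverse-matrix identity $J(\varphi^{-1})=(J(\varphi)\circ\varphi^{-1})^{-1}$ combined with a bootstrap via Lemma~\ref{HoelderDiff} for (ii), and monotonicity of the H\"older scale for (iii). The only organizational difference is that the paper orders the argument (i), (iii), (ii) --- proving the bi-Lipschitz property as a standalone step (iii) and then using it as the base of the induction in (ii) --- whereas you fold that extraction into the opening of your treatment of (ii); substantively the two are identical, and if anything you are slightly more explicit about why a global inverse exists in the $\rho>1$ case.
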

\begin{bew}
To prove part (i) we use Brouwer's invariance of domain theorem (see \cite{Bro12}): Since $\varphi: \re \rightarrow \re$ is continuous and injective, the image $\varphi(U)$ of $U$ is an open set if $U$ is open. Otherwise, if $U$ is closed, then also $\varphi(U)$ is closed: If $\varphi(x_n) \rightarrow y$ with $x_n \in U$, then $x_n$ converges to some $x \in U$ by \eqref{biLip} and hence $\varphi(x_n) \rightarrow \varphi(x)=y$. Thus $\varphi$ maps $\re$ to $\re$. The inverse $\varphi^{-1}$ is automatically a bi-Lipschitzian map, see \eqref{biLip}.

The proof of observation (iii) for $\rho'>1$ is trivial. Hence, we have to show that every $\rho$-diffeomorphism is a bi-Lipschitzian map for $\rho>1$. The estimate
\begin{align*}
 \frac{|\varphi(x)-\varphi(y)|}{|x-y|} \leq c_2
\end{align*}
follows from the fact that the derivatives $\frac{\partial \varphi_i}{\partial x_j}$ are bounded for all $i,j\in \{1,\ldots,n\}$. The formula
\begin{align}
\label{Det}
 J(\varphi^{-1})(\varphi(x))=\left(J(\varphi)(x)\right)^{-1}
\end{align}
and $|\det J(\varphi)(x)| \geq c$ together show that the derivatives of the inverse $\frac{\partial (\varphi^{-1})_i}{\partial x_j}$ are bounded for all $i,j\in \{1,\ldots,n\}$, for instance using the adjugate matrix formula. By the mean value theorem there exists a $c>0$ such that
\begin{align*}
 \frac{|\varphi^{-1}(x)-\varphi^{-1}(y)|}{|x-y|} \leq c
\end{align*}
and so part (iii) is shown.

Finally, for (ii) we have to show that $\frac{\partial (\varphi^{-1})_i}{\partial x_j} \ \in \hold[\rho-1]$ and $|\det J(\varphi^{-1})(x)| \geq c$ for $\rho>1$. The latter part follows from $\eqref{Det}$ and the boundedness of $\frac{\partial \varphi_i}{\partial x_j}$. For the first we have to argue inductively in the same way as in the inverse function theorem, starting with
\begin{align*}
 J(\varphi^{-1})(x)=\left(J(\varphi)(\varphi^{-1}(x))\right)^{-1}
\end{align*}

It is well known that
\begin{align*}
 A \rightarrow A^{-1}
\end{align*}
is a $C^{\infty}(\R^{n\times n})$-mapping for invertible $A$. Together with the upcoming Lemma \ref{HoelderDiff} this shows: If the components of $J(\varphi)$ belong to $\hold[\rho-1]$ and $\varphi^{-1}$ is an $l$-diffeomorphism, then the components of $J(\varphi^{-1})$ belong to $\hold[\min(\rho-1,l)]$ and hence $\varphi^{-1}$ is a $\min(l+1,\rho)$-diffeomorphism. This inductive argument and the induction starting point that $\varphi^{-1}$ is a $1$-diffeomorphism (by part (i) and (iii)) prove that $\varphi^{-1}$ is a $\rho$-diffeomorphism. Thus the lemma is shown.  
\end{bew}

We go on with a second standard analytical observation.
\begin{Lemma}
 \label{HoelderDiff}
Let $\varphi$ be a $\rho$-diffeomorphism and let $\max(1,s)\leq \rho$. Then there exists a constant $C$ depending on $\rho$ such that for all $f \in \hold$ it holds
\begin{align*}
 \|f\circ \varphi|\hold\|\leq C_{\varphi} \cdot \|f|\hold\|.
\end{align*}
\end{Lemma}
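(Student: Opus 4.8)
The plan is to control $f\circ\varphi$ derivative by derivative: estimate $D^{\alpha}(f\circ\varphi)$ for $|\alpha|\le\sint$ via the multivariate chain rule (Faà di Bruno), and then bound the $\lip[\srest]$-seminorms of the top-order derivatives, exploiting that a finite product of \emph{bounded} $\lip[\srest]$-functions is again a bounded $\lip[\srest]$-function (the Leibniz inequality behind Lemma~\ref{helpHoelder}). The low-regularity cases come for free. If $s=0$ then $\hold=L_{\infty}(\re)$ and $\|f\circ\varphi|L_{\infty}(\re)\|=\|f|L_{\infty}(\re)\|$, since $\varphi$ is bijective by Lemma~\ref{DiffRem}. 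If $0<s\le1$ then $\sint=0$, and besides the $L_{\infty}$-part only the seminorm has to be estimated; using that $\varphi$ is globally bi-Lipschitz (Lemma~\ref{DiffRem}(iii) and the Remark after Definition~\ref{LipDef}), with $c_{2}$ its global Lipschitz constant,
\[
\frac{|f(\varphi(x))-f(\varphi(y))|}{|x-y|^{\srest}}
=\frac{|f(\varphi(x))-f(\varphi(y))|}{|\varphi(x)-\varphi(y)|^{\srest}}\cdot\Big(\frac{|\varphi(x)-\varphi(y)|}{|x-y|}\Big)^{\srest}
\le c_{2}^{\srest}\,\|f|\lip[\srest]\|\le c_{2}^{\srest}\,\|f|\hold\|.
\]
In particular, for $\rho=1$ this already finishes the proof, since then $\max(1,s)\le\rho$ forces $s\le1$.

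Now let $1<s\le\rho$, so $\sint\ge1$. For $|\alpha|\le\sint$ the chain rule writes $D^{\alpha}(f\circ\varphi)$ as a finite sum, with coefficients depending only on $\alpha$ and hence only on $\rho$, of terms
\[
(D^{\beta}f)(\varphi(x))\cdot\prod_{l=1}^{|\beta|}D^{\gamma_{l}}\varphi_{i_{l}}(x),\qquad
1\le|\beta|\le|\alpha|,\quad |\gamma_{l}|\ge1,\quad \sum_{l=1}^{|\beta|}\gamma_{l}=\alpha.
\]
Since $|\gamma_{l}|\le|\alpha|\le\sint\le\rint$, each factor $D^{\gamma_{l}}\varphi_{i_{l}}$ is bounded by Definition~\ref{LipDef}(ii); moreover, differentiating $\partial\varphi_{i}/\partial x_{j}\in\hold[\rho-1]$ exactly $|\gamma_{l}|-1$ times gives $D^{\gamma_{l}}\varphi_{i_{l}}\in\hold[\rho-|\gamma_{l}|]$, and because $\rho-|\gamma_{l}|\ge\rho-\sint\ge s-\sint=\srest>0$ an elementary comparison of Hölder conditions yields $D^{\gamma_{l}}\varphi_{i_{l}}\in\lip[\srest]$ with seminorm $\lesssim\|\partial\varphi_{i}/\partial x_{j}|\hold[\rho-1]\|$ (at the critical value $\rho-|\gamma_{l}|=\srest$ this comparison is precisely the inequality $\srest\le\rrest$, which the hypothesis $s\le\rho$ guarantees).

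It remains to assemble. The factor $(D^{\beta}f)\circ\varphi$ is bounded by $\|f|C^{\sint}(\re)\|$, and, exactly as in the case $s\le1$, lies in $\lip[\srest]$ with seminorm $\le c_{2}^{\srest}\|D^{\beta}f|\lip[\srest]\|\le c_{2}^{\srest}\|f|\hold\|$ — here $D^{\beta}f\in\lip[\srest]$ since $D^{\beta}f$ is bounded and Lipschitz when $|\beta|<\sint$ and lies in $\lip[\srest]$ by definition when $|\beta|=\sint$. For bounded functions $g_{1},\dots,g_{r}\in\lip[\srest]$ one has $\|g_{1}\cdots g_{r}|L_{\infty}(\re)\|\le\prod_{l}\|g_{l}|L_{\infty}(\re)\|$ and, iterating $\|gh|\lip[\srest]\|\le\|g|L_{\infty}(\re)\|\,\|h|\lip[\srest]\|+\|g|\lip[\srest]\|\,\|h|L_{\infty}(\re)\|$, also $\|g_{1}\cdots g_{r}|\lip[\srest]\|\le\sum_{k}\|g_{k}|\lip[\srest]\|\prod_{l\ne k}\|g_{l}|L_{\infty}(\re)\|$. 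Applying this to each term of the chain-rule expansion bounds it in $L_{\infty}(\re)$ by $C_{\varphi}\|f|\hold\|$ for $|\alpha|\le\sint$, and its $\lip[\srest]$-seminorm by $C_{\varphi}\|f|\hold\|$ for $|\alpha|=\sint$, where $C_{\varphi}$ collects $c_{2}$, the norms $\|\partial\varphi_{i}/\partial x_{j}|\hold[\rho-1]\|$, and finitely many $\rho$-dependent combinatorial constants. Summing over the finitely many terms and over $|\alpha|\le\sint$ gives $\|f\circ\varphi|\hold\|\le C_{\varphi}\|f|\hold\|$.

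The main obstacle is the bookkeeping in the two preceding paragraphs: one must verify that no factor in the chain-rule expansion requires more smoothness of $\varphi$ than is supplied by $\partial\varphi_{i}/\partial x_{j}\in\hold[\rho-1]$, and that all Hölder exponents fit. The single genuinely delicate point is the term $(D_{i}f)(\varphi)\cdot D^{\alpha}\varphi_{i}$ with $|\beta|=1$ and $|\alpha|=\sint$, where $\rho-|\alpha|=\rho-\sint$ is smallest and may equal $\srest$ exactly — and it is precisely the assumption $s\le\rho$ (hence $\srest\le\rrest$ when $\sint=\rint$) that makes $D^{\alpha}\varphi_{i}\in\lip[\srest]$.
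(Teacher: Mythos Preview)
Your proof is correct and follows exactly the approach the paper indicates: chain rule (Fa\`a di Bruno) for the $C^{\sint}$-part and the Leibniz-type product inequality for $\lip[\srest]$ to handle the top-order H\"older seminorm. The paper's own proof is a two-line sketch (``the lemma follows now by using the chain rule and Leibniz rule for spaces of differentiable functions and for H\"older spaces $\hold$''); you have written out precisely the bookkeeping that sketch suppresses, including the verification that $\rho-|\gamma_l|\ge\srest$ so that every factor $D^{\gamma_l}\varphi_{i_l}$ lands in $\lip[\srest]$, and the identification of the borderline term $|\beta|=1$, $|\gamma|=|\alpha|=\sint$ where the hypothesis $s\le\rho$ is genuinely used.
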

\begin{bew}
By definition
\begin{align*}
 \|f \circ \varphi |\hold\|= \|f \circ \varphi |C^{\sint}(\re)\|+\sum_{|\alpha|=\sint} \|D^{\alpha} \left[f \circ \varphi\right]|\lip[\srest]\|.
\end{align*}
The lemma follows now by using the chain rule and Leibniz rule for spaces of differentiable functions and for Hölder spaces $\hold$.
\end{bew}
\begin{Bemerkung}
\label{DiffUni} 
As one can easily see, the constant in Lemma \ref{HoelderDiff} depends on $\sum\limits_{i=1}^n \sum\limits_{j=1}^n \left\|\frac{\partial \varphi_i}{\partial x_j}|\hold[\rho-1]\right\|$. If we have a sequence of functions $\{\varphi^m\}_{m \in \N}$ and 
\begin{align*}
\sup_{m \in \N}\sum_{i=1}^n\sum_{j=1}^n  \left\|\frac{\partial \varphi_i^m}{\partial x_j}\big|\hold[\rho-1]\right\|< \infty,
\end{align*} 
then there is a universal constant $C$ with $C_{\varphi_m}\leq C$, i.e. for all $m \in \N$ it holds
\begin{align*}
 \|f\circ \varphi_m|\hold\|\leq C \cdot \|f|\hold\|.
\end{align*}
\end{Bemerkung}

\begin{Lemma}
\label{Lpdiff}
 Let $0< p \leq \infty$. Let $\varphi: \re \rightarrow \re$ be bijective and let there be a constant $c>0$ such that
\begin{align}
\label{Lip2}
 c \leq \frac{|\varphi(x)-\varphi(y)|}{|x-y|}
\end{align}
for $x,y \in \re$ with $x\neq y$.
Then there is a constant $C>0$ such that
\begin{align}
\label{DiffLp}
 \|f \circ \varphi|L_p(\re) \| \leq C \cdot \|f\|L_p(\re)\|.
\end{align}
\end{Lemma}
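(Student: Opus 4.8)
The plan is to exploit the fact that hypothesis \eqref{Lip2} is nothing but the statement that the inverse map $\psi:=\varphi^{-1}:\re\to\re$, which exists since $\varphi$ is bijective, is Lipschitz continuous with constant $c^{-1}$: writing $x=\psi(u)$ and $y=\psi(v)$, inequality \eqref{Lip2} reads $|\psi(u)-\psi(v)|\le c^{-1}|u-v|$. Note that $\varphi$ itself is assumed neither smooth nor even continuous, so one cannot change variables with $\varphi$ directly; everything has to be routed through $\psi$. First I would record the standard facts about Lipschitz maps that are used: a Lipschitz map carries Lebesgue measurable sets to Lebesgue measurable sets, carries null sets to null sets, and there is a dimensional constant $c_n$ with $\mu(\psi(E))\le c_n\,c^{-n}\,\mu(E)$ for every measurable $E\subset\re$. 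The last estimate follows by covering $E$ (up to an arbitrarily small error) by essentially disjoint cubes and observing that $\psi$ maps a cube of side $\ell$ into a ball of radius $\le\tfrac{\sqrt n}{2}c^{-1}\ell$, so that $\mu(\psi(Q))\le c_n\,c^{-n}\mu(Q)$, and then summing. As a by-product, since $(f\circ\varphi)^{-1}(A)=\psi\bigl(f^{-1}(A)\bigr)$ for every Borel set $A$, measurability of $f$ implies measurability of $f\circ\varphi$, so that the left-hand side of \eqref{DiffLp} is well defined.

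For $0<p<\infty$ I would then invoke the layer-cake (distribution function) representation
\begin{align*}
 \|f\circ\varphi|L_p(\re)\|^p=\int_0^\infty p\,t^{p-1}\,\mu\bigl(\{x\in\re:\ |f(\varphi(x))|>t\}\bigr)\,dt,
\end{align*}
and observe that for each $t>0$ one has
\begin{align*}
 \{x\in\re:\ |f(\varphi(x))|>t\}=\varphi^{-1}\bigl(\{y:\ |f(y)|>t\}\bigr)=\psi\bigl(\{y:\ |f(y)|>t\}\bigr).
\end{align*}
Applying the measure estimate of the first paragraph to $E=\{|f|>t\}$ and integrating in $t$ gives
\begin{align*}
 \|f\circ\varphi|L_p(\re)\|^p\le c_n\,c^{-n}\int_0^\infty p\,t^{p-1}\mu\bigl(\{|f|>t\}\bigr)\,dt=c_n\,c^{-n}\,\|f|L_p(\re)\|^p,
\end{align*}
which is \eqref{DiffLp} with $C=c_n^{1/p}c^{-n/p}$. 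Alternatively one reaches the same conclusion with the sharp constant $C=c^{-n/p}$ by the change of variables $y\mapsto\varphi(y)$: since $\psi$ is an injective Lipschitz map, the area formula yields $\int_{\re}g(\varphi(y))\,dy=\int_{\re}g(u)\,|\det J(\psi)(u)|\,du$ for nonnegative measurable $g$, and by Rademacher's theorem $|\det J(\psi)(u)|\le c^{-n}$ for almost every $u$; taking $g=|f|^p$ finishes this variant.

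For $p=\infty$ I would argue directly: with $M:=\|f|L_\infty(\re)\|$ the set $\{y:\ |f(y)|>M\}$ is a null set, hence so is its image $\psi\bigl(\{y:\ |f(y)|>M\}\bigr)=\{x:\ |f(\varphi(x))|>M\}$ because Lipschitz maps preserve null sets; therefore $\|f\circ\varphi|L_\infty(\re)\|\le M$, i.e. \eqref{DiffLp} holds with $C=1$. The only genuinely delicate point in the whole argument is precisely that $\varphi$ carries no regularity assumption, so a naive substitution with $\varphi$ is illegitimate; the resolution is to notice that \eqref{Lip2} forces $\psi=\varphi^{-1}$ to be Lipschitz, after which only the elementary measure-theoretic properties of Lipschitz maps (or, for the sharp constant, Rademacher's theorem plus the area formula) are needed.
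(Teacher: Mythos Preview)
Your proof is correct and rests on the same core observation as the paper: condition \eqref{Lip2} says precisely that $\psi=\varphi^{-1}$ is Lipschitz with constant $c^{-1}$, hence $\mu(\varphi^{-1}(E))\le C\,\mu(E)$ for measurable $E$, and this controls $\|f\circ\varphi|L_p\|$. The paper implements this slightly differently: it establishes the measure estimate only for rectangles, via the ball inclusion $\varphi^{-1}(B_r(x_0))\subset B_{r/c}(\varphi^{-1}(x_0))$ combined with a Vitali covering argument, and then passes to general $f$ by density of step functions; you instead quote the measure estimate for arbitrary measurable sets as a standard property of Lipschitz maps and integrate via the layer-cake formula. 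Your route is shorter and, in the area-formula variant, even gives the sharp constant $C=c^{-n/p}$; the paper's route is more self-contained in that it derives the needed Lipschitz facts by hand rather than citing them. For $p=\infty$ the two arguments coincide.
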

\begin{bew}
If $p<\infty$, it suffices to prove \eqref{DiffLp} for 
\begin{align*}
 f=\sum_{j=1}^N a_j \chi_{A_j},
\end{align*}
where $a_j \in \C$, $A_j$ are pairwise disjoint rectangles in $\re$ and $\chi_{A_j}$ is the characteristic function of $A_j$. We have
\begin{align*}
 \int |(f \circ \varphi)(x)|^p \ dx = 
 \int \Big|\sum_{j=1}^N a_j \chi_{\varphi^{-1}(A_j)}(x)\Big|^p \ dx= \sum_{j=1}^N |a_j|^p \mu(\varphi^{-1}(A_j))
\end{align*}
because the preimages $\varphi^{-1}(A_j)$ are also pairwise disjoint.
Hence we have to show:

\textit{
There is a constant $C>0$ such that for all rectangles $A$ it holds
\begin{align}
\label{DiffMu}
 \mu(\varphi^{-1}(A)) \leq C \cdot \mu(A).
\end{align}
}
To prove this let $B_r(x_0)=\{x \in \re: |x-x_0|<r\}$ be the open ball around $x_0 \in \re$ with radius $r>0$. Then by \eqref{Lip2} we have
\begin{align}
\label{Balls}
 \varphi^{-1}(B_r(x_0)) \subset B_{\frac{r}{c}} (\varphi^{-1}(x_0)).
\end{align}
Hence there is a constant $C>0$ such that
\begin{align*}
 \mu(\varphi^{-1}(B_r(x_0)))< C \cdot \mu(B_r(x_0))
\end{align*}
for all $x_0 \in \re$, $r>0$.

Now, we cover a given rectangle $A$ with finitely many open balls $\{B_j\}_{j=1}^M$ such that
\begin{align}
\label{Covering}
 \mu\left(\bigcup_{j=1}^M B_j\right) \leq 2\mu(A). 
\end{align}
Afterwards we make use of the following Vitali covering lemma: There exists a subcollection $B_{j_1},\ldots,B_{j_m}$ of these balls which are pairwise disjoint and satisfy
\begin{align*}
  \bigcup_{j=1}^M B_j \subset \bigcup_{k=1}^m 3\cdot B_{j_k}.
\end{align*}
Using this, \eqref{Covering} and \eqref{Balls} for the balls $3 \cdot B_{j_k}$ finally gives
\begin{align*}
 \mu(\varphi^{-1}(A)) &\leq \mu\left(\varphi^{-1}\left(\bigcup_{j=1}^M B_j\right)\right)\leq \mu\left(\varphi^{-1}\left(\bigcup_{k=1}^M 3\cdot B_{j_k}\right)\right)
=\mu\left(\bigcup_{k=1}^M \varphi^{-1}(3\cdot B_{j_k})\right) \\
&=\sum_{k=1}^M \mu(\varphi^{-1}(3\cdot B_{j_k})) \leq C \cdot \sum_{k=1}^M \mu(3\cdot B_{j_k})\leq C \cdot 3^n  \cdot \sum_{k=1}^M \mu(B_{j_k})\\
& \leq 2C \cdot  3^n \cdot \mu(A).
\end{align*}
This proves the result for $0<p<\infty$.

For $p=\infty$ we have to show
\begin{align*}
 \|f\circ \varphi|L_{\infty}(\re)\| \leq \|f|L_{\infty}(\re)\|.
\end{align*}
This follows from: If $\mu(\{x \in \re: |f(x)|>a)\})=0$, then also $\mu(\{x \in \re: |f(\varphi(x))|>a\})=0$,
which is a consequence of \eqref{DiffMu}:

\textit{Let $M$ be a measurable set with $\mu(M)=0$. Then also $\mu(\varphi^{-1}(M))=0$.}

Hence the lemma is shown for $p=\infty$, too.

\end{bew}
\begin{Bemerkung}
 A proof of a more general observation using the Radon-Nikodym derivative and the Lebesgue point theorem can be found in Corollary 1.3 and Theorem 1.4 of \cite{Vod89} - but here we wanted to give a direct, more instructive proof for our special situation. 
\end{Bemerkung}

\begin{Bemerkung}
 By the previous proof it is obvious that Condition \eqref{DiffMu} is equivalent to \eqref{DiffLp} for $0<p<\infty$. Condition \eqref{DiffMu} does not depend on $p$. For Condition \eqref{DiffMu} it is necessary that the measure $m$ with $m(A):=\mu(\varphi^{-1}(A))$ is absolutely continuous with respect to the Lebesgue measure $\mu$. In case of $p=\infty$ this condition is also sufficient for \eqref{DiffMu} by the previous proof.
\end{Bemerkung}

Now we are ready for the main theorem of this section.
\begin{Theorem}
\label{Diffeo}
 Let $s \in \mathbb{R}$, $0<q\leq \infty$ and $\rho\geq 1$.

(i) Let $0<p\leq \infty$ and $\rho > \max(s,1+\sigma_p-s)$. If $\varphi$ is a $\rho$-diffeomorphism, then there exists a constant $c$ such that
\begin{align*}
 \|f(\varphi(\cdot))|\bspq\| \leq c \cdot \|f|\bspq\|
\end{align*}
for all $f \in \bspq$. Hence $D_{\varphi}$ maps $\bspq$ onto $\bspq$.

(ii) Let $0<p<\infty$ and $\rho > \max(s,1+\sigma_{p,q}-s)$. If $\varphi$ is a $\rho$-diffeomorphism, then there exists a constant $c$ such that
\begin{align*}
 \|f(\varphi(\cdot))|\fspq\| \leq c \cdot \|f|\fspq\|
\end{align*}
for all $f \in \fspq$. Hence $D_{\varphi}$ maps $\fspq$ onto $\fspq$.
\end{Theorem}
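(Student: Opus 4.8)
The plan is to mimic the strategy used for the pointwise multiplier theorem (Theorem~\ref{pointwise}): start from an optimal atomic decomposition $f=\sum_{\nu,m}\lambda_{\nu,m}a_{\nu,m}$ of $f\in\bspq$ provided by Theorem~\ref{AtomicRepr}, apply $D_\varphi$ termwise to obtain $f\circ\varphi=\sum_{\nu,m}\lambda_{\nu,m}(a_{\nu,m}\circ\varphi)$, and then show that each $a_{\nu,m}\circ\varphi$ is (up to a fixed constant) again an $(s,p)_{K,L}$-atom, located now at some cube comparable in size to $Q_{\nu,m}$. Since $\varphi$ is bi-Lipschitzian (Lemma~\ref{DiffRem}(iii)), $\varphi^{-1}(d\cdot Q_{\nu,m})$ is contained in a cube $Q_{\nu,m'}$ dilated by a fixed factor $d'>1$ independent of $\nu,m$; the $\hold[K]$-bound~\eqref{Atom2} transforms correctly by Lemma~\ref{HoelderDiff} (applied to the rescaled atom and the rescaled diffeomorphism, noting that the relevant derivative bounds on $\varphi$ are scale-invariant under the dyadic dilations involved here); and the moment-type condition~\eqref{Atom3} transforms by the change of variables $x=\varphi(y)$: for $\psi\in\hold[L]$,
\begin{align*}
 \left|\int \psi(x)\,(a_{\nu,m}\circ\varphi)(x)\,dx\right|
  = \left|\int \psi(\varphi^{-1}(y))\,|\det J(\varphi^{-1})(y)|\,a_{\nu,m}(y)\,dy\right|,
\end{align*}
and one estimates the $\hold[L]$-norm of $y\mapsto\psi(\varphi^{-1}(y))\,|\det J(\varphi^{-1})(y)|$ by $c\,\|\psi|\hold[L]\|$ using Lemmata~\ref{helpHoelder} and~\ref{HoelderDiff} together with $\varphi^{-1}\in\hold[\rho-1]$-regularity of the Jacobian and $|\det J(\varphi)|\geq c>0$ (so that the Jacobian of $\varphi^{-1}$ lies in $\hold[\rho-1]$ with bounded norm). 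This requires $\rho-1\geq L$, which is exactly guaranteed by $\rho>1+\sigma_p-s$ since in Theorem~\ref{AtomicRepr} we may take $L$ as close to $\sigma_p-s$ from above as we like (and similarly $K$ as close to $s$, covered by $\rho>s$).

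The first genuine obstacle is the \emph{support overlap} problem flagged in the text: after composition several atoms $a_{\nu,m}\circ\varphi$ may be associated with the same cube $Q_{\nu,m'}$, whereas Definition~\ref{DefSeq} and Theorem~\ref{AtomicRepr} expect one atom per $(\nu,m)$. The plan is to handle this as Skrzypczak does in \cite[Lemma~3]{Skr98}: because $\varphi$ is bi-Lipschitz, the number of indices $m$ for which $\varphi^{-1}(d\cdot Q_{\nu,m})$ meets a fixed cube $Q_{\nu,m'}$ is bounded by a constant $M$ depending only on $n$, $d$ and the bi-Lipschitz constants, uniformly in $\nu$. Hence one can regroup the sum into at most $M$ subsums, in each of which the newly located atoms are in one-to-one correspondence with cubes; applying Theorem~\ref{AtomicRepr} to each subsum and using that the sequence norms $\|\cdot|b_{p,q}\|$, $\|\cdot|f_{p,q}\|$ of the regrouped coefficients are controlled by (a constant times) $\|\lambda|b_{p,q}\|$ resp. $\|\lambda|f_{p,q}\|$ — here one also uses that $|\lambda_{\nu,m}|$ picks up only a bounded factor from $|\det J(\varphi^{-1})|$ being bounded above and below — finishes the estimate $\|f\circ\varphi|\bspq\|\leq c\|f|\bspq\|$, and analogously for $\fspq$.

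For the ``onto'' assertion I would invoke Lemma~\ref{DiffRem}(i)--(ii): the inverse $\varphi^{-1}$ is again a $\rho$-diffeomorphism, so $D_{\varphi^{-1}}$ is bounded on $\bspq$ (resp. $\fspq$) by the same argument, and $D_\varphi\circ D_{\varphi^{-1}}=\mathrm{id}=D_{\varphi^{-1}}\circ D_\varphi$, whence $D_\varphi$ is a topological isomorphism of the space onto itself. Finally, as in Remark~\ref{ProdDef} for the multiplier case, there is a well-definedness issue — one must check that $\sum_{\nu,m}\lambda_{\nu,m}(a_{\nu,m}\circ\varphi)$ converges in ${\cal S}'(\re)$ (which follows from the atom property of the summands via Lemma~\ref{HarmS'-KonvAtom}, since each $a_{\nu,m}\circ\varphi$ is an atom with the same parameters) and that the limit is independent of the chosen decomposition of $f$, i.e. equals $f\circ\varphi$ in a suitable dual pairing; this is settled first for $f,\varphi$ smooth (where $D_\varphi$ is continuous on ${\cal S}'$), then extended by the Fatou property (Proposition~\ref{Fatou}) and a density argument exactly parallel to the one at the end of Corollary~\ref{FolgLocal} and Remark~\ref{ProdDef}. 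The main effort, as anticipated in the text, is bookkeeping the changing supports and the multiplicative factor $|\det J(\varphi^{-1})|$ uniformly in $\nu$ and $m$; the analytic heart — that condition~\eqref{Atom3} survives composition — is short, being just the change of variables above.
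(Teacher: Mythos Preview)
Your outline matches the paper's proof closely: atomic decomposition, bi-Lipschitz control of supports, splitting into $M$ subsums to get injectivity of the relocation map, verification of \eqref{Atom2} via Lemma~\ref{HoelderDiff} applied to the rescaled diffeomorphism ${\cal T}_\nu(\varphi)=T_\nu^{-1}\circ\varphi\circ T_\nu$, and verification of \eqref{Atom3} by change of variables. The ``onto'' part and the well-definedness discussion are also in line with the paper (the latter is Remark~\ref{DiffDef}).

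There is, however, one genuine gap in your sketch. For the $f_{p,q}$-case, once the atoms $a_{\nu,m}\circ\varphi$ have been relocated to cubes $Q_{\nu,\Phi_{\nu,j}(m)}$, the resulting sequence norm is
\[
\left\|\Big(\sum_{\nu,m}|\lambda_{\nu,m}\,\chi^{(p)}_{\nu,\Phi_{\nu,j}(m)}(\cdot)|^q\Big)^{1/q}\Big|L_p(\re)\right\|,
\]
and this is \emph{not} trivially bounded by $\|\lambda|f_{p,q}\|$: the characteristic functions now sit at \emph{different} spatial locations, and injectivity of $\Phi_{\nu,j}$ alone does not move them back. The paper handles this with two further ingredients you do not mention: first the geometric inclusion $Q_{\nu,\Phi_{\nu,j}(m)}\subset\varphi^{-1}(c\cdot Q_{\nu,m})$ (which follows from $\varphi^{-1}(2^{-\nu}m)\in Q_{\nu,\Phi_{\nu,j}(m)}$ and the upper Lipschitz bound on $\varphi$), which lets one replace $\chi^{(p)}_{\nu,\Phi_{\nu,j}(m)}(\cdot)$ by $\chi^{(p)}_{\nu,m}(\varphi(\cdot))$ up to a harmless dilation; and second Lemma~\ref{Lpdiff}, which removes the composition with $\varphi$ inside the $L_p$-norm. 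Only then does one land back at $\|\lambda|f_{p,q}\|$. Your parenthetical ``$|\lambda_{\nu,m}|$ picks up only a bounded factor from $|\det J(\varphi^{-1})|$'' is misplaced here --- the coefficients $\lambda_{\nu,m}$ are unchanged throughout; the Jacobian enters only in the verification of \eqref{Atom3}, which you already treated correctly.

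A minor omission: the case $\rho=1$ (bi-Lipschitz only) must be treated separately, since then $\varphi$ need not be $C^1$ and the change-of-variables argument for \eqref{Atom3} is unavailable. The paper simply observes that $\rho=1$ forces $L=0$, so no moment condition is needed at all.
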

\begin{bew}
At first, beside the two conditions \eqref{Atom2} and \eqref{Atom3} we need to take a closer look at the centres and supports of the atoms. Briefly speaking, the decisive local properties of the set of atoms $a_{\nu,m}$ are maintained by a superposition with the diffeomorphism $\varphi$.

To be more specific: Let $M_{\nu}=\left\{x \in \re: x=2^{-\nu}m, m \in \mathbb{Z}^n \right\}$. Having in mind Lemma \ref{DiffRem} there is a $c_2>0$ with
\begin{align}
\label{DiffLip}
  |x-y| \leq c_2 |\varphi^{-1}(x)-\varphi^{-1}(y)|.  
\end{align}
 By a simple volume argument for $Q_{\nu,m}$ and by $|2^{-\nu}m-2^{-\nu}m'|\geq c \cdot 2^{-\nu}$ for $m\neq m'$ there is a constant $M \sim c_2^n$  such that 
\begin{align*}
  |\varphi^{-1}(M_{\nu}) \cap Q_{\nu,m}| \leq M
\end{align*}
for all $\nu \in \mathbb{N}_0,m \in \mathbb{Z}$. Hence we can take our atomic decomposition and split it into $M$ disjunct sums, i.e.
\begin{align*}
 f = \sum_{j=1}^M \sum_{\nu \in \N_0} \sum_{m \in M_{\nu,j}} \lambda_{\nu,m} a_{\nu,m} 
\end{align*}
with
\begin{align*}
 \bigcup_{j=1}^M M_{\nu,j}= \Z^n, \quad M_{\nu,j} \cap M_{\nu,j'} = \emptyset \text{ for } j\neq j'
\end{align*}
so that for all $\nu \in \N_0$, $m \in \Z^n$ and $j \in \{1,\ldots,M\}$
\begin{align}
\label{injective}
  |\left\{m' \in \Z^n: m' \in M_{\nu,j} \text{ and } \varphi^{-1}(2^{-\nu}m') \in Q_{\nu,m}\right\}|\leq 1.
\end{align}
Therefore, not more than one function $a_{\nu',m'}\circ \varphi$ is located at the cube $Q_{\nu,m}$ for each of the $M$ sums.

The support of a function $a_{\nu,m}\circ \varphi$ is contained in $\varphi^{-1}(d \cdot Q_{\nu,m})$ by \eqref{Atom1}. By Lemma \ref{DiffRem} there exists a $c_1>0$ with
\begin{align*}
 |\varphi^{-1}(x)-\varphi^{-1}(y)| \leq \frac{1}{c_1} |x-y|.
\end{align*}
Hence we get
\begin{align*}
  \varphi^{-1}(d \cdot Q_{\nu,m}) \subset c \cdot  \frac{d}{c_1} \cdot B_{2^{-\nu}}(\varphi^{-1}(2^{-\nu}m)), 
\end{align*}
where $B_r(x_0)=\left\{ x \in \re: |x-x_0|\leq r \right\}$. Hence, together with \eqref{injective} it follows: There is a constant $d'$ depending on $c_1$ such that for every $\nu \in \N_0$ and every $j \in \{1,\ldots,M\}$ there is an injective map $\Phi_{\nu,j}: M_{\nu,j} \rightarrow \Z^n$ with
\begin{align}
\label{DiffSupp}
 supp \ (a_{\nu,m}\circ \varphi) \subset d' \cdot Q_{\nu,\Phi_{\nu,j}(m)}.
\end{align}
for all $m \in M_{\nu,j}$. The constant $d'$ does not depend on $\nu$ or $m$.

Thus, if we take the derivative conditions \eqref{Atom2} and the moment conditions \eqref{Atom3} for $a_{\nu,m}\circ \varphi$ now for granted (which will be shown later), then 
\begin{align*}
  f_{j} \circ \varphi=\sum_{\nu \in \N_0} \sum_{m \in M_{\nu,j}} \lambda_{\nu,m} (a_{\nu,m} \circ \varphi)
\end{align*}
is an atomic decomposition of the function $f_j \circ \varphi$. Finally, we have to look at the sequence space norms, see Definition \ref{DefSeq}. 

We will concentrate on the $\fspq$-case since the $\bspq$-case is easier because it does not matter if one changes the order of summation over $m$. By the atomic representation theorem and \eqref{DiffSupp} we will have
\begin{align*}
 \|f_j \circ \varphi|\fspq\| \leq c \left\|\left(\sum_{\nu=0}^{\infty} \sum_{m \in M_{\nu,j}} |\lambda_{\nu,m}\chi_{\nu,\Phi_{\nu,j}(m)}^{(p)}(\cdot)|^q\right)^{\frac{1}{q}}\big|L_p(\re)\right\|.
\end{align*}
To transfer this into the usual sequence space norm we make use of
\begin{align}
\label{MapLip}
  Q_{\nu,\Phi_{\nu,j}(m)} \subset \varphi^{-1} (c \cdot Q_{\nu,m}) 
\end{align}
with a constant $c$ depending on $c_2$ from \eqref{DiffLip}, but independent of $\nu$ and $m$. This follows from $\varphi^{-1} (2^{-\nu}m) \in Q_{\nu,\Phi_{\nu,j}(m)}$. Hence assuming that $a_{\nu,m}\circ \varphi$ fulfil \eqref{Atom2} and \eqref{Atom3} we obtain
\begin{align}
\label{DiffEst}
\begin{split}
 \|f_{j} \circ \varphi|\fspq\| &\leq c \left\|\left(\sum_{\nu=0}^{\infty} \sum_{m \in M_{\nu,j}} |\lambda_{\nu,m}\chi_{\nu,m}^{(p)}(\varphi (\cdot))|^q\right)^{\frac{1}{q}}\big|L_p(\re)\right\| \\
	  &\leq c' \left\|\left(\sum_{\nu=0}^{\infty} \sum_{m \in M_{\nu,j}} |\lambda_{\nu,m}\chi_{\nu,m}^{(p)}(\cdot)|^q\right)^{\frac{1}{q}}\big|L_p(\re)\right\| \\
&\leq c' \left\|\left(\sum_{\nu=0}^{\infty} \sum_{m \in \Z^n} |\lambda_{\nu,m}\chi_{\nu,m}^{(p)}(\cdot)|^q\right)^{\frac{1}{q}}\big|L_p(\re)\right\| \\
	  &\leq c'' \|f|\fspq\|.
\end{split}
\end{align} 
In the first step we used \eqref{MapLip}, in the second step we used Lemma \ref{Lpdiff} and part (iii) of Lemma \ref{DiffRem} and in the last step we applied the atomic decomposition theorem for $f$. As done in the first step, one can replace the characteristic function of $c \cdot Q_{\nu,m}$ by the characteristic function of $Q_{\nu,m}$ in the sequence space norm getting equivalent norms, see \cite[section 1.5.3]{Tri08}. This can be proven using the Hardy-Littlewood maximal function.

Finally, we have to take a look at the derivative conditions \eqref{Atom2} and the moment conditions \eqref{Atom3}. The latter part is also considered in Lemma 5 of \cite{Skr98} using the atomic approach with condition \eqref{Atom21}. 

Let $a_{\nu,m}$ be an $(s,p)_{K,L}$-atom and let $\rho\geq \max(K,L+1)$. If we can show that $\varphi \circ a_{\nu,m}$ is an $(s,p)_{K,L}$-atom as well, we are done with the proof since we can choose $K$ and $L$ suitably small enough by the atomic decomposition theorem \ref{AtomicRepr}. Let $T_{\nu}(x):=2^{-\nu}x$ and ${\cal T}_{\nu}(\varphi)= T_{\nu}^{-1} \circ \varphi \circ T_{\nu}$. Then
\begin{align*}
 \|\left(a_{\nu,m}\circ \varphi\right)(2^{-\nu}\cdot)|\hold[K]\|&=  \|a_{\nu,m}\circ \varphi \circ T_{\nu}|\hold[K]\| = \|a_{\nu,m}\circ T_{\nu} \circ {\cal T}_{\nu}(\varphi)|\hold[K]\|.
\end{align*}
By a simple dilation argument for the Hölder spaces $\hold[\rho-1]$ it holds 
\begin{align*}
  \left\|\frac{\partial \left({\cal T}_{\nu}(\varphi)\right)_i}{\partial x_j}\big|\hold[\rho-1]\right\|\leq \left\|\frac{\partial \varphi_i}{\partial x_j}\big|\hold[\rho-1]\right\|
\end{align*}
for all $i,j\in \{1,\ldots,n\}$ and $\nu \in \N_0$. Hence by Lemma \ref{HoelderDiff} and Remark \ref{DiffUni} we find a constant $C$ independent of $\nu$ and $m$ such that
\begin{align*}
  \|\left(a_{\nu,m}\circ \varphi\right)(2^{-\nu}\cdot)|\hold[K]\|= \|a_{\nu,m}\circ T_{\nu} \circ {\cal T}_{\nu}(\varphi)|\hold[K]\| \leq C \cdot 
 \|a_{\nu,m}(2^{-\nu}\cdot)|\hold[K]\|
\end{align*}
So the derivative condition \eqref{Atom2} is shown. 

Regarding the moment condition \eqref{Atom3} of $a_{\nu,m}\circ \varphi$ we consider two cases: At first, let $\varphi$ be a $\rho$-diffeomorphism with $\rho>1$. Then $\varphi$ and $\varphi^{-1}$ are differentiable. We use the moment condition of $a_{\nu,m}$ itself and Lemma \ref{HoelderDiff} to get
\begin{align*}
\left| \, \int\limits_{d' \cdot Q_{\nu,\Phi_{\nu,j}(m)}} \psi(x) \cdot a(\varphi(x)) \ dx \right| &=\left|\,\int\limits_{\varphi^{-1}\left(d \cdot Q_{\nu,m}\right)} \psi(x) \cdot a(\varphi(x)) \ dx \right| \\
&=\left| \, \int\limits_{d \cdot Q_{\nu,m}} \psi\left(\varphi^{-1}(x)\right) \cdot |\det \varphi^{-1}|(x) \cdot a(x) \ dx \right| \\
&\leq C  \cdot 2^{-\nu\Kap} \cdot \||\det \varphi^{-1}(x)|\cdot \left(\psi\circ \varphi^{-1}\right)|\hold[L]\| \\
&\leq C' \cdot 2^{-\nu\Kap} \cdot \|\psi|\hold[L]\|.
\end{align*}
We used the transformation formula for integrals and 
\begin{align*}
  \det\ J \left(\varphi^{-1}\right)\in \hold[L]
\end{align*}
since $\varphi$ is a $\rho$-diffeomorphism with $\rho \geq L+1$. Furthermore, the sign of $\det J\left(\varphi^{-1}\right)$ is constant. 

If $\rho=1$, then $L=0$ by our choice of $\rho$. This means, that no moment conditions are needed. Hence we have nothing to prove. The choice of $\rho=1$ is only allowed if $\sigma_p<s<1$ resp. $\sigma_{p,q}<s<1$. 

For some further technicalities similar as in Remark \ref{ProdDef} see Remark \ref{DiffDef}.
\end{bew}

\begin{Bemerkung}
 This has been proven (in a sketchy way) in Lemma 3 in \cite{Skr98} for the more special atomic definition there. 
\end{Bemerkung}

\begin{Bemerkung}
 If $\sigma_p<s<1$ resp. $\sigma_{p,q}<s<1$, then the choice of $\rho=1$ is possible for these values of $s$. This gives the same result as in Proposition 4.1 in \cite{Tri02}, where the notation of Lipschitz diffeomorphisms as in Definition \ref{LipDef} is used. This results in
\begin{Theorem}
Let $0<q\leq \infty$.

(i) Let $0<p\leq \infty$ and $\sigma_p<s<1$. If $\varphi:\re \rightarrow \re$ is a bi-Lipschitzian map, then there exists a constant $c$ such that
\begin{align*}
 \|f(\varphi(\cdot))|\bspq\| \leq c \cdot \|f|\bspq\|.
\end{align*}
for all $f \in \bspq$. Hence $D_{\varphi}$ maps $\bspq$ onto $\bspq$.

(ii) Let $0<p<\infty$ and $\sigma_{p,q}<s<1$. If $\varphi:\re \rightarrow \re$ is a bi-Lipschitzian map, then there exists a constant $c$ such that
\begin{align*}
 \|f(\varphi(\cdot))|\fspq\| \leq c \cdot \|f|\fspq\|.
\end{align*}
for all $f \in \fspq$. Hence $D_{\varphi}$ maps $\fspq$ onto $\fspq$.
\end{Theorem}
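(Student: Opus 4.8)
The plan is to obtain this theorem as an immediate corollary of Theorem \ref{Diffeo}, by specializing to the endpoint $\rho=1$. First I would observe that, by Definition \ref{LipDef}(i), a bi-Lipschitzian map $\varphi:\re\rightarrow\re$ is exactly a $1$-diffeomorphism. So Theorem \ref{Diffeo} applies with $\rho=1$ as soon as its parameter restriction is satisfied. In the $\bspq$-case that restriction reads $\rho>\max(s,1+\sigma_p-s)$, and for $\rho=1$ this is equivalent to the pair of strict inequalities $s<1$ and $s>\sigma_p$, i.e. to the hypothesis $\sigma_p<s<1$; likewise, in the $\fspq$-case the restriction $\rho>\max(s,1+\sigma_{p,q}-s)$ becomes $\sigma_{p,q}<s<1$ when $\rho=1$. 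Hence Theorem \ref{Diffeo} directly yields $\|f(\varphi(\cdot))|\bspq\|\leq c\|f|\bspq\|$ for all $f\in\bspq$ in case (i), and $\|f(\varphi(\cdot))|\fspq\|\leq c\|f|\fspq\|$ for all $f\in\fspq$ in case (ii).

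It remains to see that $D_{\varphi}$ is onto. For this I would invoke Lemma \ref{DiffRem}(i): since $\varphi$ is a $1$-diffeomorphism it is bijective and $\varphi^{-1}$ is again a $1$-diffeomorphism, to which the boundedness part just established applies verbatim. Thus $D_{\varphi^{-1}}$ maps $\bspq$ (resp. $\fspq$) boundedly into itself, and from $\varphi\circ\varphi^{-1}=\varphi^{-1}\circ\varphi=\mathrm{id}_{\re}$ one reads off $D_{\varphi}\circ D_{\varphi^{-1}}=D_{\varphi^{-1}}\circ D_{\varphi}=\mathrm{id}$ on the respective space. In particular $D_{\varphi}$ is surjective — in fact a topological isomorphism of $\bspq$ (resp. $\fspq$) onto itself with bounded inverse $D_{\varphi^{-1}}$.

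There is essentially no hard step here: all of the substance — splitting the atomic decomposition of $f$ into the $M$ subfamilies with pairwise disjoint supports and injective index maps $\Phi_{\nu,j}$, verifying conditions \eqref{Atom2} and \eqref{Atom3} for the superpositions $a_{\nu,m}\circ\varphi$, and transferring the sequence-space norms by means of Lemma \ref{Lpdiff} together with part (iii) of Lemma \ref{DiffRem} — was already carried out in the proof of Theorem \ref{Diffeo}. The only point worth a comment is the admissibility of the endpoint $\rho=1$, and this is precisely the case singled out at the end of the proof of Theorem \ref{Diffeo}: when $\rho=1$ the moment parameter may be taken $L=0$, so condition \eqref{Atom3} is vacuous and no moment conditions are imposed on the atoms at all, which is exactly why mere bi-Lipschitz regularity of $\varphi$ (as opposed to any differentiability assumption) suffices. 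No additional argument is needed.
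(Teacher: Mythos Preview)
Your proposal is correct and follows exactly the paper's own route: the theorem is presented there as the special case $\rho=1$ of Theorem \ref{Diffeo}, with the observation that $\rho=1$ forces $L=0$ so that no moment conditions are needed. Your additional remarks on surjectivity via $\varphi^{-1}$ simply make explicit what is already contained in the conclusion of Theorem \ref{Diffeo} together with Lemma \ref{DiffRem}(i).
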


\end{Bemerkung}

\begin{Bemerkung}
\label{DiffDef} 
We have to deal with some technicalities of the proof of Theorem \ref{Diffeo}. We concentrate on the $\bspq$-case, the $\fspq$-case is nearly the same.

Let at first be $\rho>1$. In principle, Theorem \ref{Diffeo} and Lemma \ref{HarmS'-KonvAtom} show that
\begin{align}
\label{AtomKonv2}
 \sum_{\nu} \sum_{m}\lambda_{\nu,m} (a_{\nu,m} \circ \varphi) 
\end{align}
converges unconditionally in ${\cal S}'(\re)$, where
\begin{align*}
 f=\sum_{\nu} \sum_{m}\lambda_{\nu,m} a_{\nu,m} \text{ in } {\cal S}'(\re),
\end{align*}
and the limit belongs to $\bspq$ if $f$ belongs to $\bspq$. 

To define the superposition of $f$ and $\varphi$ as this limit, we have to show that the limit does not depend on the atomic decomposition we chose for $f$. Let $\psi \in C^{\infty}(\re)$ with compact support be given. Then
\begin{align*}
 \sum_{\nu} \sum_{m}\left| \int_{\re}  \lambda_{\nu,m} \left(a_{\nu,m}\circ \varphi \right)(x) \psi(x) \ dx\right|
 =\sum_{\nu} \sum_{m}\left| \int_{\re}  \lambda_{\nu,m} a_{\nu,m}(x) \left[ \psi\left(\varphi^{-1}(x)\right) \cdot |\det \varphi^{-1}(x)| \right] \ dx\right|
\end{align*}
makes sense, see \eqref{KonvB}, because by Lemma \ref{HoelderDiff} the function $\psi\left(\varphi^{-1}(x)\right) \cdot |\det \varphi^{-1}(x)|$ has compact support and belongs to $\hold[M]$ for a suitable $M>0$ with $M>\sigma_p-s$. Now the achievements at the end of Corollary \ref{FolgLocal} show that this integral limit does not depend on the choice of the atomic decomposition for $f$. Hence we obtain that the limit in \eqref{AtomKonv2} (considered as an element in ${\cal S}'(\re)$) is the same for all choices of atomic decompositions.  

If the choice of $\rho=1$ is allowed, then automatically $s>\sigma_p$ and $\bspq$ consists of regular distributions by Sobolev's embedding. Hence the superposition of $f \in \bspq \subset L_p(\re)$ for $1 \leq p \leq \infty$ resp. $f \in \bspq \subset L_1(\re)$ for $0<p\leq 1$  with a $1$-diffeomorphism $\varphi$ is defined as the superposition of a regular distribution with a $1$-diffeomorphism and is continuous as an operator from $L_p(\re)$ resp. $L_1(\re)$ to $L_p(\re)$ resp. $L_1(\re)$ by Lemma \ref{Lpdiff}.

If $p<\infty$, then atomic decompositions of $f \in \bspq$ converge to $f$ with respect to the norm of $L_p(\re)$ for $1\leq p < \infty$ resp. with respect to the norm of $L_1(\re)$ for $0<p<1$, see \cite[Section 2.12]{Tri06}. Hence the limit does not depend on the choice of the atomic decomposition and is equal to the usual definition of the superposition of a regular distribution $f$ and the $1$-diffeomorphism $\varphi$. 

If $p=\infty$, we use the local convergence of the atomic decompositions of $f$ in $L_{\infty}(\re)$, i.e. we restrict $f$ and its atomic decomposition to a compact subset $K$ of $\re$. Then this restricted atomic decomposition converges to the restricted $f$ with respect to the norm of $L_{\infty}(K)$. This suffices to prove uniqueness of the limit which is an $L_{\infty}(\re)$-function.
\end{Bemerkung}

\begin{Bemerkung}
For fixed $s,p$ and $q$ the constant $c$ in Theorem \ref{Diffeo} depends on the $\rho$-diffeomorphism $\varphi$. Looking into the proof of Theorem \ref{Diffeo} and Remark \ref{DiffUni} the following definition is useful: 
\end{Bemerkung}

\begin{Definition}
 Let $\rho\geq 1$. We call $\{\varphi^m\}_{m \in \N}$ a bounded sequence of $\rho$-diffeomorphisms if every $\varphi^m$ is a $\rho$-diffeomorphism, if there are universal constants $c_1,c_2>0$ with
\begin{align*}
 c_1\leq \frac{|\varphi^m(x)-\varphi^m(y)|}{|x-y|} \leq c_2 
\end{align*}
for $m \in \N$, $x,y\in \re$ with $0<|x-y|\leq 1$ and if - for $\rho>1$ - there is a universal constant $c$ with   
\begin{align*}
\sum_{i=1}^n\sum_{j=1}^n  \left\|\frac{\partial \varphi_i^m}{\partial x_j}\big|\hold[\rho-1]\right\|< c.
\end{align*}
for $m \in \N$.
\end{Definition}

\begin{Bemerkung}
 If $\{\varphi^m\}_{m \in \N}$ is a bounded sequence of $\rho$-diffeomorphisms, then $(\varphi^m)^{-1}$ exists for all $m \in \N$ and $\{(\varphi^m)^{-1} \}_{m \in \N}$ is a bounded sequence of $\rho$-diffeomorphisms, too. This follows by the arguments of Lemma \ref{DiffRem}.
\end{Bemerkung}
Now, by going through the proof of Theorem \ref{Diffeo} and Remark \ref{DiffUni} it follows
\begin{Folgerung}
 Let $s \in \mathbb{R}$, $0<q\leq \infty$ and $\rho\geq 1$.

(i) Let $0<p\leq \infty$ and $\rho > \max(s,1+\sigma_p-s)$. If $\{\varphi^m\}_{m \in \N}$ is a bounded sequence of $\rho$-diffeomorphisms, then there exists a constant $C$ such that
\begin{align*}
 \|f(\varphi^m(\cdot))|\bspq\| \leq c \cdot \|f|\bspq\|
\end{align*}
for all $f \in \bspq$ and $m \in \N$.

(ii) Let $0<p<\infty$ and $\rho > \max(s,1+\sigma_{p,q}-s)$. If $\{\varphi^m\}_{m \in \N}$ is a bounded sequence of $\rho$-diffeomorphisms, then there exists a constant $C$ such that
\begin{align*}
 \|f(\varphi^m(\cdot))|\fspq\| \leq c \cdot \|f|\fspq\|
\end{align*}
for all $f \in \fspq$ and $m\in \N$.
\end{Folgerung}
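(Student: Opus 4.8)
The plan is to revisit the proof of Theorem \ref{Diffeo} and to verify that \emph{every} constant produced there depends on the diffeomorphism $\varphi$ only through the bi-Lipschitz constants $c_1,c_2$ from \eqref{biLip} and, for $\rho>1$, through $\sum_{i,j}\bigl\|\tfrac{\partial\varphi_i}{\partial x_j}\big|\hold[\rho-1]\bigr\|$. Once this bookkeeping is carried out, the asserted uniformity over a bounded sequence $\{\varphi^m\}_{m\in\N}$ is immediate from the definition of such a sequence: one simply applies the proof of Theorem \ref{Diffeo} to each $\varphi^m$ separately, with $K,L$ fixed once and for all in dependence of $s,p,q,\rho$ (so that $K>s$, $L>\sigma_p-s$ resp. $L>\sigma_{p,q}-s$, $\rho\geq\max(K,L+1)$), and reads off that all occurring constants are majorised by the corresponding suprema over $m$.

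First I would go through the geometric and combinatorial part of the proof. The number $M\sim c_2^n$ of pairwise disjoint subfamilies into which the atomic decomposition of $f$ is split depends only on the upper Lipschitz bound; the dilation constant $d'$ in \eqref{DiffSupp} depends only on the bi-Lipschitz constants. Hence the atomic representation theorem \ref{AtomicRepr} is invoked with one and the same $d'$ for all $m$, so its equivalence constants are independent of $m$. In the sequence-space chain \eqref{DiffEst}, the constant in \eqref{MapLip} depends only on $c_2$, the application of Lemma \ref{Lpdiff} (to $\chi_{\nu,m}^{(p)}\circ\varphi^m$, using part (iii) of Lemma \ref{DiffRem}) produces a constant governed by \eqref{DiffMu}, hence by the lower Lipschitz bound $c_1$, and the remaining two steps use only the (uniform) constants of Theorem \ref{AtomicRepr}. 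Thus \eqref{DiffEst} holds with a constant independent of $m$.

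Next I would treat the two atomic conditions for $a_{\nu,m}\circ\varphi^m$. For the H\"older condition \eqref{Atom2} the proof reduces, via the rescaled map ${\cal T}_\nu(\varphi^m)=T_\nu^{-1}\circ\varphi^m\circ T_\nu$ and the scale invariance $\bigl\|\tfrac{\partial({\cal T}_\nu(\varphi^m))_i}{\partial x_j}\big|\hold[\rho-1]\bigr\|\leq\bigl\|\tfrac{\partial\varphi^m_i}{\partial x_j}\big|\hold[\rho-1]\bigr\|$, to Lemma \ref{HoelderDiff} together with Remark \ref{DiffUni}, which already states that the resulting constant is controlled by $\sup_m\sum_{i,j}\bigl\|\tfrac{\partial\varphi^m_i}{\partial x_j}\big|\hold[\rho-1]\bigr\|<\infty$. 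For the moment condition \eqref{Atom3} one applies the transformation formula and needs $\bigl\|(\psi\circ(\varphi^m)^{-1})\cdot|\det J((\varphi^m)^{-1})|\,\big|\hold[L]\bigr\|\leq C\,\|\psi|\hold[L]\|$ with $C$ uniform in $m$. Here I would use that $\{(\varphi^m)^{-1}\}_{m\in\N}$ is again a bounded sequence of $\rho$-diffeomorphisms (the remark following the definition of a bounded sequence, i.e.\ the quantitative form of Lemma \ref{DiffRem}): then Lemma \ref{HoelderDiff} and Remark \ref{DiffUni} handle $\psi\circ(\varphi^m)^{-1}$ with a uniform constant, Lemma \ref{helpHoelder} handles the product, and the uniform bound on $\bigl\|\det J((\varphi^m)^{-1})\big|\hold[L]\bigr\|$ follows from the adjugate matrix formula (Cramer's rule) together with the uniform H\"older bounds on the entries of $J(\varphi^m)$ and the uniform lower bound $|\det J(\varphi^m)|\geq c_1^n$, the latter being a consequence of the lower Lipschitz bound. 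When $\rho=1$ one has $L=0$, so \eqref{Atom3} is void and nothing is to be checked. The well-definedness of $f\circ\varphi^m$ is settled exactly as in Remark \ref{DiffDef}, uniformly in $m$.

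The point requiring the most care — rather than a genuine obstacle — is the passage to the inverse maps: one must make sure that the data controlling the constants for $(\varphi^m)^{-1}$ (its bi-Lipschitz constants, the H\"older norms of its Jacobian entries, and the lower bound for $|\det J((\varphi^m)^{-1})|$) are themselves uniformly bounded. This is precisely the statement that bounded sequences of $\rho$-diffeomorphisms are stable under inversion, whose proof is the quantitative version of Lemma \ref{DiffRem}. Everything else is a routine inspection of the already-given proof of Theorem \ref{Diffeo}, and combining the uniform estimates for the $M$ subfamilies yields the claim for $\bspq$; the $\fspq$-case is identical.
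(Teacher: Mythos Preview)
Your proposal is correct and follows precisely the approach the paper takes: the paper's own proof is simply the one-line remark that the corollary follows ``by going through the proof of Theorem \ref{Diffeo} and Remark \ref{DiffUni}'', and you have carried out exactly that bookkeeping in detail. Your explicit tracking of the dependence on $c_1,c_2$ and on $\sup_m\sum_{i,j}\|\partial\varphi^m_i/\partial x_j\,|\,\hold[\rho-1]\|$, together with the appeal to the stability of bounded sequences of $\rho$-diffeomorphisms under inversion, matches the paper's intended argument step for step.
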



\bibliographystyle{abbrv}
\bibliography{ben}

\begin{thebibliography}{10}

\bibitem{BoS99}
G.~Bourdaud and W.~Sickel.
\newblock Changes of variable in {{B}esov} spaces.
\newblock {\em Math. Nachr.}, 198:19{\textendash}39, 1999.

\bibitem{Bro12}
L.~E.~J. Brouwer.
\newblock Zur {{I}nvarianz} des {$n$}-dimensionalen {{G}ebiets}.
\newblock {\em Mathematische Annalen}, 72(1):55{\textendash}56, 1912.

\bibitem{Dah01}
W.~Dahmen.
\newblock Wavelet methods for {{PDE}s} - some recent developments.
\newblock {\em Journal of Computational and Applied Mathematics},
  128(1-2):133{\textendash}185, 2001.

\bibitem{Fra86}
J.~Franke.
\newblock On the spaces of {{T}riebel-{L}izorkin} type: pointwise multipliers
  and spaces on domains.
\newblock {\em Mathematische Nachrichten}, 125:29{\textendash}68, 1986.

\bibitem{FrJ85}
M.~Frazier and B.~Jawerth.
\newblock Decomposition of {{B}esov} spaces.
\newblock {\em Indiana Univ. Math. Journ.}, 34:777{\textendash}799, 1985.

\bibitem{FrJ90}
M.~Frazier and B.~Jawerth.
\newblock A discrete transform and decompositions of distribution spaces.
\newblock {\em J. Funct. Analysis}, 93:34{\textendash}170, 1990.

\bibitem{GoR90}
V.~M. Gol'dshtein and Y.~G. Reshetnyak.
\newblock {\em Quasiconformal mappings and {{S}obolev} {{S}paces}}.
\newblock Kluwer Academic Publishers, Dordrecht, 1990.

\bibitem{GoR76}
V.~M. Gol'dshtein and A.~S. Romanov.
\newblock Transformations that preserve {{S}obolev} spaces.
\newblock {\em Sib. Math. Zh.}, 25(3):55{\textendash}61, 1984.

\bibitem{GoV75}
V.~M. Gol'dshtein and S.~K. Vodop'yanov.
\newblock Lattice isomorphisms of the spaces {${L}_n^1$} and quasiconformal
  mappings.
\newblock {\em Sib. Math. Zh.}, 16(2):224{\textendash}246, 1975.

\bibitem{GoV76}
V.~M. Gol'dshtein and S.~K. Vodop'yanov.
\newblock Functional characteristics of quasiisometric mappings.
\newblock {\em Sib. Math. Zh.}, 17(4):768{\textendash}773, 1976.

\bibitem{KaL95}
J.~Kahane and P.~{Lemarie-Rieusset}.
\newblock {\em Fourier Series and Wavelets.}
\newblock Gordon and Breach Publ., Amsterdam, 1995.

\bibitem{KoS02}
H.~Koch and W.~Sickel.
\newblock Pointwise multipliers of {{B}esov} spaces of smoothness zero and
  spaces of continuous functions.
\newblock {\em Rev. Mat. Iberoamericana}, 18(3):587{\textendash}626, 2002.

\bibitem{Mar90}
I.~G. Markina.
\newblock A change of variable that preserves the differential properties of
  functions.
\newblock {\em Sib. Math. Zh.}, 31(3):422{\textendash}432, 1990.

\bibitem{Maz69}
V.~G. Maz'ya.
\newblock Weak solutions of the {{D}irichlet} and {{N}eumann} problems.
\newblock {\em Tr. Mosk. Mat. O-va.}, 20:137{\textendash}172, 1969.

\bibitem{MaS85}
V.~G. Maz'ya and T.~O. Shaposhnikova.
\newblock {\em Theory of Multipliers in Spaces of Differentiable Functions}.
\newblock Pittman, {Boston-London}, 1985.

\bibitem{MaS09}
V.~G. Maz'ya and T.~O. Shaposhnikova.
\newblock {\em Theory of Sobolev Multipliers}.
\newblock Springer, {Berlin-Heidelberg}, 2009.

\bibitem{Net92}
V.~Netrusov.
\newblock Theorems on traces and multipliers for functions in
  {{L}izorkin-{T}riebel} spaces.
\newblock {\em Zapiski Nauchnykh Seminarov {POMI}}, 200:132{\textendash}138,
  1992.

\bibitem{Pee76}
J.~Peetre.
\newblock {\em New Thoughts on Besov Spaces}.
\newblock Duke Univ. Math. Ser., Durham, 1976.

\bibitem{RS96}
T.~Runst and W.~Sickel.
\newblock {\em Sobolev spaces of fractional order, Nemytskij operators, and
  nonlinear partial differential equations}.
\newblock de Gruyter, Berlin.

\bibitem{Ryc99}
V.~S. Rychkov.
\newblock On a theorem of {{B}ui}, {{P}aluszy\'{n}ski}, and {{T}aibleson}.
\newblock {\em Tr. Mat. Inst. Steklova}, 227:286{\textendash}298, 1999.

\bibitem{Sch10}
B.~Scharf.
\newblock Local means and atoms in vector-valued function spaces.
\newblock {\em Jenaer Schriften zur Mathematik und Informatik},
  {Math/Inf/05/10}, 2010.
\newblock On {ArXiv.org:} http://arxiv.org/abs/1103.6159v1.

\bibitem{ScV12}
C.~Schneider and J.~Vybiral.
\newblock Non-smooth atomic decompositions, traces on {{L}ipschitz} domains,
  and pointwise multipliers in function spaces.
\newblock {\em submitted}, 2012.

\bibitem{Sic99a}
W.~Sickel.
\newblock On pointwise multipliers for $\fspq$ in case $\sigma_{p,q} < s <
  n/p$.
\newblock {\em Ann. Mat. Pura Appl.}, 174:209--250, 1999.

\bibitem{Sic99b}
W.~Sickel.
\newblock Pointwise multipliers of {{L}izorkin-{T}riebel} spaces.
\newblock {\em Operator Theory, Advances Appl.}, 110:295{\textendash}321, 1999.

\bibitem{Skr98}
L.~Skrzypczak.
\newblock Atomic decompositions on manifolds with bounded geometry.
\newblock {\em Forum Math.}, 10(1):19{\textendash}38, 1998.

\bibitem{Str67}
R.~Strichartz.
\newblock Multipliers on fractional {{S}obolev} spaces.
\newblock {\em Journal of Mathematics and Mechanics}, 16:1031{\textendash}1060,
  1967.

\bibitem{Tri83}
H.~Triebel.
\newblock {\em Theory of Function Spaces}.
\newblock Birkh\"{a}user, Basel, 1983.

\bibitem{Tri92}
H.~Triebel.
\newblock {\em Theory of Function Spaces {II}}.
\newblock Birkh\"{a}user, Basel, 1992.

\bibitem{Tri97}
H.~Triebel.
\newblock {\em Fractals and Spectra}.
\newblock Birkh\"{a}user, Basel, 1997.

\bibitem{Tri02}
H.~Triebel.
\newblock Function spaces in {{L}ipschitz} domains and on {{L}ipschitz}
  manifolds. {{C}haracteristic} functions as pointwise multipliers.
\newblock {\em Rev. Mat. Complut.}, 15(2):475{\textendash}524, 2002.

\bibitem{Tri06}
H.~Triebel.
\newblock {\em Theory of Function Spaces {III}}.
\newblock Birkh\"{a}user, Basel, 2006.

\bibitem{Tri08}
H.~Triebel.
\newblock {\em Function spaces and wavelets on domains}.
\newblock Publishing House European Math. Soc., Z\"{u}rich, 2008.

\bibitem{TrW96}
H.~Triebel and H.~Winkelvoss.
\newblock Intrinsic atomic characterizations of function spaces on domains.
\newblock {\em Math. Z.}, 221(1):647{\textendash}673, 1996.

\bibitem{UkV02}
A.~D. Ukhlov and S.~K. Vodop'yanov.
\newblock Superposition operators in {{S}obolev} spaces.
\newblock {\em Izv. Vyssh. Uchebn. Zaved. Mat.}, 10:11{\textendash}33, 2002.

\bibitem{Vod89}
S.~K. Vodop'yanov.
\newblock Mappings of homogeneous groups and imbeddings of functional spaces.
\newblock {\em Sib. Math. Zh.}, 30:25{\textendash}41, 1989.

\bibitem{Zyg45}
A.~Zygmund.
\newblock Smooth functions.
\newblock {\em Duke Math. J.}, 12(1):47{\textendash}76, 1945.

\end{thebibliography}

\end{document}